\documentclass[a4paper,final]{amsart}

\usepackage{showkeys}
\usepackage{fullpage}
\usepackage{mathtools, amssymb, amsthm, amsfonts, mathrsfs}
\usepackage{enumitem}
\usepackage{thmtools}

\usepackage{xcolor, graphicx, here}
\definecolor{darkgreen}{rgb}{0.0, 0.6, 0.0}

\usepackage{tikz}
\usetikzlibrary{intersections, calc, arrows, cd}

\usepackage[colorlinks,citecolor=darkgreen,linkcolor=red]{hyperref}
\usepackage{cleveref}
	
\numberwithin{equation}{section}
\numberwithin{figure}{section}
\allowdisplaybreaks

\setenumerate{label={\upshape (\arabic*)}, nosep}
\setitemize{nosep}
\setdescription{nosep}
\newlist{enua}{enumerate}{1}
\setlist*[enua]{label={\upshape (\arabic*)}, nosep}
\newlist{enur}{enumerate}{1}
\setlist*[enur]{label={\upshape (\roman*)}, nosep}

\theoremstyle{plain}
\newtheorem{thm}{Theorem}[section]
\newtheorem{prp}[thm]{Proposition}
\newtheorem{lem}[thm]{Lemma}
\newtheorem{cor}[thm]{Corollary}
\newtheorem{fct}[thm]{Fact}

\newtheorem{setup}[thm]{Setup}
\newtheorem{cond}[thm]{Condition}
\newtheorem{thma}{Theorem}
\newtheorem{setupa}[thma]{Setup}

\theoremstyle{definition}
\newtheorem{dfn}[thm]{Definition}

\newtheorem{rmk}[thm]{Remark}
\newtheorem{ex}[thm]{Example}

\crefname{thm}{Theorem}{Theorems}
\crefname{thma}{Theorem}{Theorems}
\crefname{prp}{Proposition}{Propositions}
\crefname{lem}{Lemma}{Lemmas}
\crefname{cor}{Corollary}{Corollaries}
\crefname{dfn}{Definition}{Definitions}
\crefname{rmk}{Remark}{Remarks}
\crefname{fct}{Fact}{Facts}
\crefname{ex}{Example}{Examples}
\crefname{ques}{Question}{Questions}
\crefname{setup}{Setup}{Setups}
\crefname{setupa}{Setup}{Setups}
\crefname{cond}{Condition}{Conditions}



\DeclareMathOperator{\dgMod}{\mathsf{dgMod}}
\DeclareMathOperator{\dgmod}{\mathsf{dgmod}}
\DeclareMathOperator{\Qcoh}{\mathsf{Qcoh}}

\DeclareMathOperator{\Ob}{\mathsf{Ob}}
\DeclareMathOperator{\Mor}{\mathsf{Mor}}

\DeclareMathOperator{\Filt}{\mathsf{Filt}}

\DeclareMathOperator{\Pic}{Pic}
\DeclareMathOperator{\LSpec}{LSpec}
\DeclareMathOperator{\LSupp}{LSupp}
\DeclareMathOperator{\LAss}{LAss}

\DeclareMathOperator{\noeth}{noeth}
\DeclareMathOperator{\fp}{fp}

\DeclareMathOperator{\Cone}{Cone}

\DeclareMathOperator{\dgHom}{\mathcal{H}\textit{\kern -1.5pt om}\hspace{0.5pt}}

\DeclareMathOperator{\torf}{\mathsf{torf}}
\DeclareMathOperator{\tors}{\mathsf{tors}}
\DeclareMathOperator{\serre}{\mathsf{serre}}

\DeclareMathOperator{\Loc}{\mathsf{Loc}}

\newcommand{\imply}{\Rightarrow}

\newcommand{\Equi}{\quad \Longleftrightarrow \quad}

\newcommand{\wti}{\widetilde}

\newcommand{\ol}{\overline}

\newcommand{\xr}[1]{\xrightarrow{\, #1 \, }}
\newcommand{\inj}{\hookrightarrow}
\newcommand{\surj}{\twoheadrightarrow}

\newcommand{\isoto}{\xr{\simeq}}

\newcommand{\iso}{\cong}

\newcommand{\bbZ}{\mathbb{Z}}

\newcommand{\calL}{\mathcal{L}}

\newcommand{\bbk}{\Bbbk}

\newcommand{\catA}{\mathcal{A}}
\newcommand{\catB}{\mathcal{B}}
\newcommand{\catC}{\mathcal{C}}

\newcommand{\catF}{\mathcal{F}}

\newcommand{\catS}{\mathcal{S}}
\newcommand{\catT}{\mathcal{T}}

\newcommand{\catX}{\mathcal{X}}
\newcommand{\catY}{\mathcal{Y}}

\DeclareMathOperator{\Mod}{\mathsf{Mod}}

\DeclareMathOperator{\catmod}{\mathsf{mod}}

\DeclareMathOperator{\coh}{\mathsf{coh}}

\newcommand{\id}{\mathrm{id}}
\DeclareMathOperator{\Hom}{Hom}

\DeclareMathOperator{\Ima}{Im}
\DeclareMathOperator{\Ker}{Ker}
\DeclareMathOperator{\Cok}{Cok}

\newcommand{\pb}{\arrow[rd,"{\mathrm{PB}}",phantom]}

\newcommand{\op}{\text{op}}

\DeclareMathOperator{\Spec}{Spec}

\DeclareMathOperator{\Ass}{Ass}

\newcommand{\mm}{\mathfrak{m}}

\newcommand{\pp}{\mathfrak{p}}

\newcommand{\kk}{\kappa}

\DeclareMathOperator{\Supp}{Supp}

\newcommand{\shF}{\mathscr{F}}

\newcommand{\shL}{\mathscr{L}}

\newcommand{\shO}{\mathscr{O}}

\DeclareMathOperator{\shHom}{\mathscr{H}\textit{\kern -3pt om}\hspace{0.5pt}}

\newcommand{\la}{\langle}
\newcommand{\ra}{\rangle}

\makeatletter

\newcommand{\colim@}[2]{%
  \vtop{\m@th\ialign{##\cr
    \hfil$#1\operator@font colim$\hfil\cr
    \noalign{\nointerlineskip\kern1.5\ex@}#2\cr
    \noalign{\nointerlineskip\kern-\ex@}\cr}}%
}

\newcommand{\dlim}{%
  \mathop{\mathpalette\colim@{\rightarrowfill@\scriptscriptstyle}}\nmlimits@
}

\newcommand{\plim}{%
  \mathop{\mathpalette\varlim@{\leftarrowfill@\scriptscriptstyle}}\nmlimits@
}

\makeatother

\begin{document}
\title{Atom spectra of symmetric monoidal abelian categories and classification of subcategories}
\author{Shunya Saito}
\address{Graduate School of Mathematical Sciences, The University of Tokyo, 3-8-1 Komaba, Meguro-ku, Tokyo 153-8914, Japan}
\email{shunya-saito@g.ecc.u-tokyo.ac.jp}

\subjclass[2020]{Primary 18E10; Secondary 18M05, 18E40}
\keywords{torsion pair; torsion classes; torsion-free classes; symmetric monoidal categories, Grothendieck categories.}

\begin{abstract}
We extend the classification results for torsion classes and torsion-free classes in the category of finitely generated modules over a commutative noetherian ring
to suitable symmetric monoidal closed noetherian abelian categories.

Our main tool is the orbit atom spectrum,
defined as the quotient of Kanda's atom spectrum by the action induced by tensoring with invertible objects.
We prove that, under natural tensor-theoretic assumptions,
several classes of subcategories collapse to Serre subcategories or torsion-free classes.
Moreover, torsion-free classes compatible with the tensor structure are classified by arbitrary subsets of the orbit atom spectrum.

As applications, we recover the classical classifications for commutative noetherian rings
and obtain analogues for graded modules, coherent sheaves, and dg modules.
\end{abstract}

\maketitle
\tableofcontents

\section{Introduction}\label{s:intro}
\subsection{Background}
The classification of subcategories of abelian categories is a classical problem in representation theory, algebraic geometry, and commutative algebra.
Many natural classes of subcategories encode structural features of abelian categories.  
For example, Serre subcategories control the localization theory of abelian categories.
Torsion classes and torsion-free classes arise as the two halves of torsion pairs,
and hence describe ways of decomposing objects in an abelian category.
Wide subcategories are extension-closed subcategories which are abelian in their own right.
These subcategories are all defined by being closed under basic operations in an abelian category, such as extensions, subobjects, quotients, kernels, cokernels, and images.
Thus, given an abelian category, it is a basic problem to understand its full subcategories satisfying such natural closure conditions.

One of the most classical classification theorems is due to Gabriel \cite{Gab}:
for a noetherian scheme $X$, localizing subcategories of the category $\Qcoh X$ of quasi-coherent sheaves are classified in terms of specialization-closed subsets of the underlying topological space of $X$.
In particular, for a commutative noetherian ring $R$, Serre subcategories of the category $\catmod R$ of finitely generated $R$-modules are classified by specialization-closed subsets of
$\Spec R$.
This result has served as a prototype for many later classification theorems.

Another fundamental classification result is due to Takahashi \cite{Takahashi}:
for a commutative noetherian ring $R$, torsion-free classes of $\catmod R$ are classified by arbitrary subsets of $\Spec R$.
Subsequent results showed that, over a commutative noetherian ring, many apparently different closure conditions collapse to one of two types. 
More precisely, torsion classes, wide subcategories, ICE-closed subcategories, and
CE-closed subcategories coincide with Serre subcategories \cite{Takahashi,SW};
similarly, IKE-closed subcategories and IE-closed subcategories coincide with torsion-free classes \cite{IMST,Eno}.
For the precise definitions of the subcategories appearing above, see \cref{dfn:several subcat}.
See also \cite{Krause-torf,CS,IK,KS,Sai-coh,KS-2} for related developments and extensions.

These results reveal a remarkable rigidity in the commutative noetherian case.
In many other settings, the subcategories mentioned above usually do not coincide and exhibit much richer behavior.
For example, over finite-dimensional algebras, such subcategories are closely related to tilting theory, $\tau$-tilting theory, and silting theory.
Through these theories, they are connected with noncrossing partitions, cluster combinatorics, and other combinatorial structures \cite{IT,AIR,MS,Asai,Eno-ICE,ES-ICE}.
In contrast, many natural closure conditions on subcategories of $\catmod R$ collapse into only two familiar types: Serre subcategories and torsion-free classes.
Moreover, both types are controlled by the same topological space $\Spec R$:
Serre subcategories correspond to specialization-closed subsets, and torsion-free classes correspond to arbitrary subsets.

The purpose of this paper is to show that these phenomena are not special to commutative rings.
Rather, once the tensor product is taken into account, they are instances of a general phenomenon in symmetric monoidal abelian categories.

\subsection{Main results}
We now explain our main results.
The basic setting of this paper is a \emph{locally noetherian Grothendieck cosmos},
that is, a locally noetherian Grothendieck category $\catA$ equipped with a symmetric monoidal closed structure.
Thus, it is equipped with a tensor product, a unit object $I$, and internal Hom objects $[M,N]$, characterized by the adjunction between $-\otimes M$ and $[M,-]$.
In what follows, let $\catA$ be a locally noetherian Grothendieck cosmos.
We write $\catA_{\noeth}$ for the full subcategory of noetherian objects.
The basic examples of locally noetherian Grothendieck cosmoi include the following categories:
\begin{itemize}
\item the category $\Mod R$ of modules over a commutative noetherian ring,
\item the category $\Mod^G S$ of graded modules over a commutative graded noetherian ring,
\item the category $\Qcoh X$ of quasi-coherent sheaves on a noetherian scheme,
\item the category $\dgMod A$ of dg modules over a skew-commutative dg noetherian ring.
\end{itemize}

In an abelian category equipped with a symmetric monoidal structure, it is natural to study subcategories which are compatible with the tensor product.
A subcategory $\catX$ is called a \emph{tensor ideal}
if $X\otimes M\in \catX$ for all $X\in\catX$ and all objects $M\in\catA$.
Tensor ideals are especially natural from the viewpoint of localization.
A localizing subcategory which is also a tensor ideal is called a \emph{tensor localizing subcategory}.
If a localizing subcategory $\catX$ is a tensor ideal,
then the quotient category $\catA/\catX$ again inherits a symmetric monoidal closed structure, and the quotient functor is monoidal.
Thus, tensor localizing subcategories provide the localization theory compatible with the tensor product.

However, the condition of being a tensor ideal involves tensoring with all objects of $\catA$,
and is therefore difficult to control directly.
There is a weaker and more tractable condition obtained by considering only invertible objects.
Recall that an object $L$ is \emph{invertible} if the functor $-\otimes L$ is an autoequivalence.
We say that a subcategory $\catX$ of $\catA$ is \emph{$L$-closed}
if $X\otimes L\in \catX$ for all $X\in\catX$ and all invertible objects $L$ of $\catA$.
This condition is natural in many examples.
An additive subcategory of $\catmod R$ closed under direct summands is automatically L-closed,
whereas an additive subcategory of $\catmod^G S$ closed under direct summands is L-closed
if and only if it is closed under grading shifts.

We prove our main results under the following setup.
A key consequence of this setup is that, although being L-closed is a priori weaker,
it is equivalent to being a tensor ideal for the relevant classes of subcategories.
\begin{setupa}[{cf.\ \cref{setup:classify subcat}}]\label{setup:A}
For a locally noetherian Grothendieck cosmos $\catA$, assume that:
\begin{itemize}
\item
The unit object $I$ is finitely presented.
\item
The Grothendieck category $\catA$ is generated by a small set of invertible-filtered objects.
\end{itemize}
\end{setupa}
Here, an object is called \emph{invertible-filtered}
if it is obtained by a finite iterated extension of invertible objects.
The conditions of \cref{setup:A} are satisfied by $\Mod R$, $\Mod^G S$, $\dgMod A$, and $\Qcoh X$ for a suitable noetherian scheme. 

We now state the first main result.
\begin{thma}[{cf.\ \cref{prp:Serre=CE}}]\label{thm:B}
Let $\catA$ be a locally noetherian Grothendieck cosmos satisfying the assumptions in \cref{setup:A}.
Let $\catX$ be an L-closed subcategory in $\catA_{\noeth}$.
\begin{enua}
\item
The following are equivalent in $\catA_{\noeth}$:
\begin{itemize}
\item $\catX$ is a Serre subcategory.
\item $\catX$ is a torsion class.
\item $\catX$ is a wide subcategory.
\item $\catX$ is an ICE-closed subcategory.
\item $\catX$ is a CE-closed subcategory.
\end{itemize}
\item
The following are equivalent in $\catA_{\noeth}$:
\begin{itemize}
\item $\catX$ is a torsion-free class.
\item $\catX$ is an IKE-closed subcategory.
\item $\catX$ is an IE-closed subcategory.
\end{itemize}
\end{enua}
\end{thma}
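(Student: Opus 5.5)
The overall strategy mirrors the commutative-ring proofs of \cite{Takahashi,SW,IMST,Eno}. The trivial implications come first: Serre $\imply$ torsion class $\imply$ ICE-closed $\imply$ CE-closed, Serre $\imply$ wide $\imply$ ICE-closed, and torsion-free $\imply$ IKE-closed $\imply$ IE-closed. So it suffices to prove two things: an L-closed CE-closed subcategory is Serre, and an L-closed IE-closed subcategory is a torsion-free class (closed under subobjects and extensions). For both, the main input I would establish first is a \emph{local generation lemma} coming from \cref{setup:A}. For every $M\in\catA_{\noeth}$ there are finitely many invertible objects $L_1,\dots,L_n$ and an epimorphism $\bigoplus_i L_i^{\oplus m_i}\otimes M \surj N$ whenever $N$ is suitably controlled by $M$. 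The precise claim I would aim for is: if $N$ is a subquotient of $M$ that is ``supported'' in $M$, then $N$ is built from quotients of $L\otimes M$. I would prove this by using that $\catA$ is generated by invertible-filtered objects, that $I$ is finitely presented, and that internal Hom $[L,M]\cong L^{-1}\otimes M$ for invertible $L$.

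\textbf{CE-closed $\imply$ Serre.} Closure under extensions is given, so it remains to show closure under subobjects and quotients. For quotients, take $X\in\catX$ and an epimorphism $X\surj Y$. I would show that $Y$ is a cokernel of a map $X'\to X''$ with $X',X''$ finite direct sums of twists $L\otimes X$. These lie in $\catX$ by L-closedness together with closure under finite direct sums, which follows from extension closure. To get such a presentation, take the kernel $K$, cover it by a sum of objects $L\otimes I$, and use the adjunction to produce a map out of twists of $X$. The argument should parallel the ring case, where $\Hom(X,X)$-translates of $X$ cover submodules via $\Ann$, using the fact that $K\subseteq X$ is annihilated modulo support. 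For subobjects, I would follow Stanley--Wang: use a noetherian induction on $X$ and atom-spectrum support. A subobject $Y\subseteq X$ is a kernel, and kernels of maps in $\catX$ can be realized as cokernels of maps between twisted copies of $X$ by an Artin--Rees/filtration argument. I expect this step to be the \textbf{main obstacle}, since one must replace multiplication by ring elements $x\colon X\to X$ with morphisms $L\otimes X\to X$ induced by sections of $L^{-1}$, that is, morphisms $L\to I$. The natural approach is to consider the ``ideal'' $\sum_{L,\,f\colon L\to I}\Ima(f)\subseteq I$ and reproduce Takahashi's argument: for noetherian $X$, $X/\mathfrak{a}X$-type quotients lie in the Serre closure.

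\textbf{IE-closed $\imply$ torsion-free.} For this part I would use Kanda's atom spectrum. The aim is to show that an L-closed IE-closed $\catX$ equals $\{M\in\catA_{\noeth}\mid \LAss M\subseteq \Phi\}$, where $\Phi$ is the set of atoms $\alpha$ represented by monoform subobjects of objects in $\catX$. The inclusion of $\catX$ into the right-hand side is immediate. For the reverse inclusion, take $M$ with $\LAss M\subseteq\Phi$. Embed $M$ into a finite direct sum of objects $E_\alpha$, each of which is a twist-closure of objects of $\catX$, and use that images are closed: $M$ is the image of a map between objects of $\catX$. The key sublemma is that the L-orbit of an atom in $\Phi$ yields objects in $\catX$ whose injective envelopes agree, which is where L-closedness enters. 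Once the right-hand side description holds, closure under subobjects follows because $\LAss$ of a subobject is contained in $\LAss$ of the ambient object. Closure under extensions follows from $\LAss$ of an extension lying in the union of the $\LAss$ of its ends.
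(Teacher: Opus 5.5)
Your reduction to the two implications ``CE-closed $\imply$ Serre'' and ``IE-closed $\imply$ torsion-free'' is fine. The gap is that the proposal never proves the one nontrivial fact everything rests on: an L-closed torsion class $\catT\subseteq\catA_{\noeth}$ is closed under subobjects. Your ``local generation lemma'' says that subquotients of $M$ are filtered by quotients of twists of $M$. That is just a restatement of this fact, and the ingredients you list for it (invertible-filtered generators, $I$ finitely presented, $[L,M]\iso L^{-1}\otimes M$) do not prove it. The transplant of Takahashi and Stanley--Wang also has nothing to run on:
\begin{itemize}
\item the ``ideal'' $\sum_{f\colon L\to I}\Ima f$ is all of $I$ (take $L=I$, $f=\id_I$);
\item there are no primes, ring elements or Artin--Rees lemma to use;
\item a map $L\to K\subseteq X$ does not give, by adjunction, a map $L'\otimes X\to X$ whose image covers $K$.
\end{itemize}
So your presentation of a quotient of $X$ as a cokernel between sums of twists of $X$ is unsupported.

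The missing idea is localization at points of $\LSpec\catA$, as in \cref{prp:Serre=tors}. Under \cref{setup:A}, $\catT$ is a tensor ideal, so $[\catT,\catF]=0$ for $\catF=\catT^{\perp}$. For $x\in\LAss\catF$, pick a monoform $H\in\catF$ with $H_x$ simple in $\catA_x$ (\cref{prp:monoform sub with Tx simple}). Since $[T,H]_x\iso\catA_x[T_x,H_x]$ for noetherian $T$, you get $\catA_x(T_x,L\otimes H_x)=0$ for every invertible $L$. Every simple object of $\catA_x$ is such a twist, so \cref{prp:Nakayama} forces $T_x=0$. Hence $\LSupp\catT\cap\LAss\catF=\emptyset$. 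Comparing with the hereditary torsion pair attached to the open set $\Phi=\LSupp\catT$ gives $\catT=\LSupp^{-1}(\Phi)$, which is Serre.

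Once this is known, both parts follow formally, with no atom-theoretic description of $\catX$. The torsion closure $\Filt(\mathsf{Fac}\,\catX)$ of an L-closed $\catX$ is L-closed, hence Serre.
\begin{itemize}
\item If $\catX$ is CE-closed and $X\surj Y$ has kernel $K$, filter $K$ by $K_i$ with $K_i/K_{i-1}$ a quotient of some $X_i\in\catX$. Then $X/K_i=\Cok(X_i\to X/K_{i-1})$ stays in $\catX$.
\item If $\catX$ is IE-closed and $M\subseteq X\in\catX$, filter $M$ the same way. The pullback $P_i$ of $X_i\surj M_i/M_{i-1}$ along $M_i\surj M_i/M_{i-1}$ is an extension of $X_i$ by $M_{i-1}$, hence lies in $\catX$. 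Then $M_i=\Ima(P_i\to X)\in\catX$.
\end{itemize}
This is the Kobayashi--Saito argument the paper invokes.

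Your plan for (2) instead tries to prove $\catX=\LAss^{-1}(\LAss\catX)$, which is the classification of \cref{thm:C}. In the paper that needs the extra hypothesis ($*$) of \cref{prp:Ass^G X controlls X}, used to make $M_x$ of finite length when $\LAss M=\{x\}$, and \cref{thm:B} does not assume it. Also, ``agreeing injective envelopes'' does not produce morphisms in $\catA_{\noeth}$ exhibiting $M$ as an image of a map between objects of $\catX$. The paper's substitute is the evaluation map $M\to[[M,T],T]$. Its target lies in $\catX$ only because an L-closed torsion-free class is a Hom ideal, which you do not yet know for an IE-closed $\catX$.
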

Thus, under \cref{setup:A}, among L-closed subcategories,
torsion classes, wide subcategories, ICE-closed subcategories, and CE-closed subcategories are precisely Serre subcategories.
Likewise, IKE-closed subcategories and IE-closed subcategories are precisely torsion-free classes.

The proof of \cref{thm:B} is based on the theory of atom spectra developed by Kanda \cite{Kan-Serre}.
For an abelian category $\catA$, the atom spectrum is a topological space $\Spec \catA$ that
generalizes the prime spectrum of a commutative ring.
A fundamental result in the theory of atom spectra says that,
when $\catA$ is a locally noetherian Grothendieck category, localizing subcategories of $\catA$ are classified by open subsets of $\Spec \catA$.
Thus $\Spec \catA$ controls the ordinary localization theory of $\catA$.

In the symmetric monoidal case, however, the relevant localizations are those compatible with the tensor product.
Recall that the Picard group $\Pic \catA$ is the group of isomorphism classes of invertible objects of $\catA$ under the tensor product.
It acts on $\catA$ by tensoring with invertible objects, and hence on $\Spec \catA$.
We therefore define the \emph{orbit atom spectrum} by
\[
\LSpec \catA := \Spec \catA / \Pic \catA.
\]
As in Kanda's theory, one can define subsets $\LSupp M$ and $\LAss M$ of $\LSpec \catA$ for each object $M \in \catA$, analogous to the support and associated primes of a module.

The ordinary atom spectrum $\Spec\catA$ controls ordinary abelian localizations,
whereas the orbit atom spectrum $\LSpec\catA$ controls monoidal abelian localizations.
More precisely, under \cref{setup:A}, tensor localizing subcategories are exactly L-closed localizing subcategories.
Hence, tensor localizations are classified by open subsets of $\LSpec\catA$.
Using this, we can define the localization of $\catA$ at a point $x \in \LSpec\catA$,
which produces a locally noetherian Grothendieck cosmos $\catA_x$ that behaves like a local category in the following sense: its simple objects are unique up to tensoring with invertible objects.
This local nature is the key ingredient of the proof of \cref{thm:B}.

By \cref{thm:B}, under \cref{setup:A}, the subcategories of $\catA_{\noeth}$ considered above collapse to two types:
Serre subcategories and torsion-free classes.
The former are already classified by the orbit atom spectrum:
L-closed Serre subcategories of $\catA_{\noeth}$ correspond to open subsets of $\LSpec\catA$.
Thus, it remains to classify the L-closed torsion-free classes.

For this, we impose one additional assumption,
which is an analogue of the familiar relation between support and associated primes
over a commutative noetherian ring:
every prime in the support of a module contains an associated prime.
\begin{thma}[{cf.\ \cref{prp:classify G-torf}}]\label{thm:C}
Let $\catA$ be a locally noetherian Grothendieck cosmos satisfying \cref{setup:A}.
In addition, assume the following condition:
\begin{itemize}
\item
For any $M\in \catA$ and $x\in \LSupp M$, there exists $y\in \LAss M$ such that $y$ belongs to the topological closure of $\{x\}$ in $\LSpec \catA$.
\end{itemize}
Then there is an order-preserving bijection between the following sets:
\begin{enur}
\item
the set of $L$-closed torsion-free classes of $\catA_{\noeth}$.
\item
the power set of $\LSpec \catA$.
\end{enur}
The bijection is given by
\[
\Phi \mapsto \LAss^{-1}(\Phi):=\{M\in \catA_{\noeth}\mid \LAss(M) \subseteq \Phi\},\quad
\catX \mapsto \LAss \catX:=\bigcup_{X\in \catX} \LAss(X),
\]
where $\Phi$ is a subset of $\LSpec \catA$ and $\catX$ is an $L$-closed torsion-free class.
\end{thma}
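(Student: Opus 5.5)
Both maps should be well defined and mutually inverse, so the plan is to check three things. First, $\LAss^{-1}(\Phi)$ is an L-closed torsion-free class. Second, $\LAss(\LAss^{-1}(\Phi))=\Phi$. Third, $\LAss^{-1}(\LAss\catX)=\catX$ for every L-closed torsion-free class $\catX$. Order preservation is then immediate from the formulas.

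For the first point we use Kanda's standard properties of associated atoms, transported to the orbit spectrum:
\begin{itemize}
\item $\Ass N\subseteq \Ass M$ for a subobject $N\subseteq M$;
\item $\Ass M\subseteq \Ass M'\cup\Ass M''$ for an exact sequence $0\to M'\to M\to M''\to 0$;
\item $\Ass(M\otimes L)=L\cdot\Ass M$ for $L$ invertible, since $-\otimes L$ is an autoequivalence and so permutes atoms.
\end{itemize}
Passing to $\Pic\catA$-orbits gives the same properties for $\LAss$, with $\LAss(M\otimes L)=\LAss M$. Hence $\LAss^{-1}(\Phi)$ is closed under subobjects, extensions and tensoring with invertibles, and it is a torsion-free class in $\catA_{\noeth}$.

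For the second point, $\LAss(\LAss^{-1}(\Phi))\subseteq\Phi$ is trivial. For the reverse inclusion, take $x\in\Phi$ represented by an atom $\alpha$. Since $\catA$ is locally noetherian, $\alpha$ is represented by a noetherian monoform object $H$, and $\Ass H=\{\alpha\}$, so $\LAss H=\{x\}$ and $x\in\LAss(\LAss^{-1}(\Phi))$. Here I would use that every atom of a locally noetherian category has a noetherian monoform representative, and that monoform objects have a single associated atom.

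For the third point, $\catX\subseteq\LAss^{-1}(\LAss\catX)$ is trivial. The converse is the main obstacle: if $\LAss M\subseteq\LAss\catX$ with $M$ noetherian, we must show $M\in\catX$. I would first prove a key lemma: if $x\in\LAss X$ for some $X\in\catX$, then every noetherian monoform $H$ with $\overline{H}$ in the orbit $x$ lies in $\catX$. The argument goes as follows.
\begin{itemize}
\item Since $x\in\LAss X$, some $L\otimes X$ contains a monoform subobject $H'$ whose atom is that of $H$, and $H'\in\catX$ by L-closedness and subobject-closedness.
\item $H$ and $H'$ share a nonzero subobject $H_0$, which lies in $\catX$. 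We must then recover $H$ from $H_0$ using closure under extensions and subobjects, which is where the local behaviour enters.
\end{itemize}
My first attempt is to localize at $y$ as in the paper, or at the closure of $x$, and then use the extra hypothesis. Suppose $H\notin\catX$, and let $T$ be the torsion part of $H$ for the torsion pair $({}^{\perp}\catX,\catX)$ restricted to noetherians. Then $H/T\in\catX$ and $T\neq 0$. The hypothesis supplies, for each $z\in\LSupp T$, some $y\in\LAss T$ with $y\in\overline{\{z\}}$. Since $T\subseteq H$ we have $\LAss T=\{x\}$. Combining $\LAss T=\{x\}$ with $H_0\in\catX$ and $\Hom(T,\catX)=0$, we should obtain $T\cap H_0=0$. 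This contradicts the fact that every nonzero subobject of a monoform object meets $H_0$ nontrivially; this is the uniformity of monoform objects. So I expect the uniformity argument, possibly without the support condition, to already give $H\in\catX$.

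With the lemma in hand, take $M$ with $\LAss M\subseteq\LAss\catX$ and suppose $M\notin\catX$. Let $T\neq0$ be its torsion part, so $\Hom(T,\catX)=0$. Choose any $z\in\LSupp T$. The support hypothesis gives $y\in\LAss T\subseteq\LAss M\subseteq\LAss\catX$, so $T$ contains a monoform subobject $H$ with orbit $y$. By the lemma $H\in\catX$, and the inclusion $H\hookrightarrow T$ is a nonzero map into $\catX$, a contradiction. Hence $M\in\catX$.

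Here the support condition is used simply to guarantee $\LAss T\neq\emptyset$ in the orbit setting, together with the correct interplay between orbits and closures. I expect verifying the lemma (the uniformity step across different Picard translates) to be the delicate part.
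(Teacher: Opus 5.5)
Your first two steps are correct and match the paper. The third step has a genuine gap.

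\textbf{The torsion part may not exist.} In the noetherian but non-artinian category $\catA_{\noeth}$, a torsion-free class is generally not the torsion-free half of a torsion pair. For example, in $\catmod\bbZ$ the class $\catX$ of finite abelian $p$-groups equals $\LAss^{-1}(\{(p)\})$, and ${}^{\perp}\catX$ consists of the finite groups of order prime to $p$. There is no exact sequence $0\to T\to\bbZ\to F\to 0$ with $T\in{}^{\perp}\catX$ and $F\in\catX$.

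\textbf{Both contradictions point the wrong way.} The condition $\Hom(T,\catX)=0$ restricts morphisms \emph{out of} $T$. What you produce are subobjects of $T$ lying in $\catX$: the inclusion $H\inj T$ in your final step, and $T\cap H_0$ in the lemma. Since ${}^{\perp}\catX$ is only a torsion class and need not be closed under subobjects, such subobjects contradict nothing. Concretely, take the quiver $1\to 2$ over a field $\bbk$, let $P=(\bbk\xr{1}\bbk)$, and let $\catX=\mathrm{add}\,S_2$, which is a torsion-free class.
\begin{itemize}
\item $P$ is monoform with $\Ass P=\{\ol{S_2}\}\subseteq\Ass\catX$.
\item $\Hom(P,\catX)=0$, so $T=P$.
\item Yet $T\cap S_2=S_2\neq 0$ and $P\notin\catX$.
\end{itemize}
With the pointwise tensor product of \S\ref{ss:non ex}, the Picard group here is trivial, so L-closedness is vacuous. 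This example violates the generation hypothesis of \cref{setup:A} and ($*$), which are exactly the hypotheses your argument never genuinely uses. So your lemma cannot follow from uniformity of monoform objects plus subobject and extension closure, with or without the support condition. Your stated use of ($*$), to ensure $\LAss T\neq\emptyset$, is also vacuous, since every nonzero noetherian object has nonempty $\LAss$.

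\textbf{What is missing is the internal Hom.} The paper proceeds as follows.
\begin{enumerate}
\item Reduce, via \cref{prp:ass decomp} and extension closure, to $\LAss M=\{x\}$ with $x\in\LAss\catX$. The needed statement is for arbitrary such $M$, not just monoform ones.
\item Under the Setup, an L-closed torsion-free class of $\catA_{\noeth}$ is automatically a Hom ideal (\cref{prp:criterion for D-closed}). This is where generation by invertible-filtered objects enters.
\item Condition ($*$) gives $\LAss_{\catA_x}M_x=\{x\}$, which forces $M_x$ to have finite length in the local category $\catA_x$.
\item Pick a simple quotient $M_x\surj S$. Simple objects of $\catA_x$ are unique up to Picard translation and $\catX$ is L-closed, so there is a monoform $T\in\catX$ with $T_x\cong S$ (\cref{prp:monoform sub with Tx simple,prp:char G-local}).
\item The canonical map $M\to[[M,T],T]$ has target in $\catX$ and is nonzero at $x$. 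Hence its kernel $K$ satisfies $\LAss K\subseteq\{x\}$ and $\ell(K_x)<\ell(M_x)$.
\item Induction on $\ell(M_x)$ gives $K\in\catX$, with $K=0$ when $\ell(M_x)=1$. Then $M\in\catX$, since $M/K$ embeds into $[[M,T],T]\in\catX$.
\end{enumerate}
This double-dual argument is what replaces your uniformity step, and it is what rules out examples like the one above.
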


\cref{thm:B,thm:C} generalize the classical classifications of subcategories of $\catmod R$ \cite{Takahashi,SW,IMST,Eno}.
However, the proofs are new even in the case of commutative noetherian rings.
In the classical commutative setting, one can use a lifting argument:
an $R_\pp$-linear map $M_{\pp}\to N_{\pp}$ can be lifted to an $R$-linear map $M\to N$, up to a unit of $R_\pp$.
This argument is not well suited to the monoidal abelian setting considered here.
Our proofs avoid it and instead use only global constructions available in a symmetric monoidal closed category, namely the tensor product and the internal Hom.
Thus, our proofs give new conceptual proofs of the classical classifications.

In concrete examples, the orbit atom spectrum is often computable and recovers familiar spaces:
the ordinary prime spectrum for commutative noetherian rings,
the homogeneous prime spectrum for commutative graded noetherian rings,
and the underlying topological space for noetherian schemes.
These computations allow \cref{thm:B} and \cref{thm:C} to recover the known classifications for $\catmod R$ and to produce graded and geometric analogues.

In the dg setting, we obtain a new computation of atom spectra.
For a bounded non-positive right dg noetherian dg ring, not necessarily commutative,
we determine the atom spectrum of the category of dg modules itself in terms of the atom spectrum of the degree-zero part.
In particular, in the skew-commutative case, the relevant orbit atom spectrum is isomorphic to the ordinary prime spectrum of the degree-zero part.
This yields dg analogues of the classification results above.

We also discuss examples outside the scope of \cref{setup:A} in \S \ref{ss:non ex}.
For representations of finite acyclic quivers with the pointwise tensor product and for modules over cocommutative Hopf algebras,
tensor torsion classes are still Serre subcategories, although these categories are not generated by invertible-filtered objects in general.
These examples suggest that the equality between tensor torsion classes and
tensor Serre subcategories should hold beyond the framework of \cref{setup:A}.

However, to prove such a general statement, one may need a different kind of spectrum.
The orbit atom spectrum used in this paper is constructed as a quotient of the atom spectrum by the autoequivalences induced by invertible objects, and is well suited to the framework of \cref{setup:A}.
To treat more general symmetric monoidal abelian categories,
it seems more natural to construct the theory directly from objects such as ``prime tensor localizing subcategories'', as in Balmer's tensor triangular geometry \cite{Balmer}.
Even if such a theory were developed, the results of this paper would still be of interest,
since orbit atom spectra are often computable in concrete examples.

\medskip
\noindent
\textbf{Organization.}
This paper is organized as follows.
In \S \ref{s:pre},
we review basic facts on atom spectra and symmetric monoidal categories.
In \S \ref{s:Atom with G-act},
for an abelian category $\catA$ equipped with an action of a group $G$, 
we introduce the \emph{orbit atom spectrum} $\Spec^G \catA$, defined as the quotient space of the atom spectrum $\Spec \catA$ by $G$.
For a symmetric monoidal abelian category, if we take $G=\Pic\catA$,
then $\Spec^G\catA = \LSpec \catA$, which was introduced in the introduction.
We also investigate basic properties of $\Spec^G \catA$ and the localization of $\catA$ at a point of $\Spec^G \catA$.
In \S \ref{s:LSpec},
we apply the theory of orbit atom spectra developed in \S \ref{s:Atom with G-act} to symmetric monoidal abelian categories.
We show that, under suitable assumptions, monoidal abelian localizations can be controlled by the orbit atom spectrum, and that the localization at a point of $\Spec^G \catA$ behaves well with respect to the monoidal structure.
In \S \ref{s:classify subcat},
we prove \cref{thm:B,thm:C} by applying the theory developed in \S \ref{s:Atom with G-act} and \S \ref{s:LSpec}.
In \S \ref{s:ex}, we describe the orbit atom spectrum for several concrete Grothendieck cosmoi satisfying \cref{setup:A} and discuss consequences of \cref{thm:B,thm:C}.
In \S \ref{ss:non ex}, we give examples of Grothendieck cosmoi that lie outside the scope of \cref{setup:A} but nevertheless have the property that tensor torsion classes are Serre subcategories.


\medskip
\noindent
{\bf Conventions.}
Throughout this paper,
we fix a Grothendieck universe.
A set is said to be \emph{small} if it belongs to the Grothendieck universe.
For a category $\catC$,
the collection $\Ob\catC$ of objects and the collection $\Mor \catC$ of morphisms are sets.
For any objects $M,N \in \catC$,
the set ${\catC}(M,N)$ of morphisms from $M$ to $N$ is assumed to be small.
All rings, modules, schemes, and so forth are assumed to be small.
A category is said to be \emph{essentially small}
if there exists a bijection between the set of isomorphism classes of objects and a small set.

All subcategories are supposed to be full subcategories closed under isomorphisms.
Thus, we often identify the subcategories with the subsets of the set of isomorphism classes
of objects.

\medskip
\noindent
{\bf Acknowledgement.}
The author would like to thank Ryo Kanda and Yuki Imamura for their helpful discussions.
The author is supported by JSPS KAKENHI Grant Number JP24KJ0057.

\section{Preliminaries}\label{s:pre}
\subsection{Basic concepts in abelian categories}\label{ss:abel cat}
In this subsection, we briefly recall some basic concepts of abelian categories in order to fix notation and terminology.
Throughout this subsection, let $\catA$ be an abelian category.
\begin{dfn}\label{dfn:several subcat}
Let $\catX$ be an additive subcategory of $\catA$.
\begin{enua}
\item
$\catX$ is said to be \emph{closed under coproducts}
if, for any family $\{X_i\}_i$ of objects in $\catX$,
whenever the coproduct $\bigoplus_i X_i$ exists in $\catA$, we have $\bigoplus_i X_i \in \catX$.
Similarly, we define \emph{closed under products}.
\item
$\catX$ is said to be \emph{extension-closed} (or \emph{closed under extensions})
if for any exact sequence $0 \to A \to B \to C \to 0$,
we have that $A,C \in \catX$ implies $B\in \catX$.
\item
$\catX$ is said to be \emph{closed under images} (resp.\ \emph{kernels}, resp.\ \emph{cokernels})
if for any morphism $f \colon X \to Y$ in $\catA$ with $X,Y \in \catX$,
we have that $\Ima f \in \catX$ (resp.\ $\Ker f \in \catX$, resp.\ $\Cok f \in \catX$).
\item
$\catX$ is said to be \emph{closed under subobjects} (resp.\ \emph{quotients})
if for any injection $A \inj X$ (resp.\ surjection $X \surj A$) in $\catA$ such that $X\in \catX$,
we have that $A \in \catX$.
\item
$\catX$ is called a \emph{torsion-free class}
if it is closed under subobjects and extensions.
\item
$\catX$ is called a \emph{torsion class}
if it is closed under quotients and extensions.
\item
$\catX$ is called a \emph{Serre subcategory} 
if it is closed under subobjects, quotients and extensions.
\item
$\catX$ is said to be \emph{wide}
if it is closed under kernels, cokernels and extensions.
\item
$\catX$ is said to be \emph{IE-closed}
if it is closed under images and extensions.
\item
$\catX$ is said to be \emph{IKE-closed}
if it is closed under images, kernels and extensions.
\item
$\catX$ is said to be \emph{ICE-closed}
if it is closed under images, cokernels and extensions.
\item
$\catX$ is said to be \emph{CE-closed}
if it is closed under cokernels and extensions.
\item
$\catX$ is said to be \emph{KE-closed}
if it is closed under kernels and extensions.

\end{enua}
\end{dfn}

Let $\catX$ be a set of objects of $\catA$.
We denote by $\catX^{\perp}$ (resp.\ ${}^{\perp}\catX$) the subcategory of $\catA$ consisting of objects $M$ such that $\catA(X,M)=0$ (resp.\ $\catA(M,X)=0$) for any $X\in \catX$.
An object $M$ is \emph{$\catX$-filtered} if there exists a finite filtration
\[
0=M_0 \subseteq M_1 \subseteq \cdots \subseteq M_n=M
\]
such that $X_i:=M_i/M_{i-1} \in \catX$ for any $i$.
In this case, we write $M \in X_1*\cdots *X_n$.
We denote by $\Filt \catX$ the subcategory of $\catA$ consisting of $\catX$-filtered objects.

A pair $(\catT,\catF)$ of subcategories of $\catA$ is called a \emph{torsion pair}
if $\Hom_{\catA}(\catT,\catF)=0$ and for any $M\in \catA$, there exists an exact sequence
$0 \to T \to M \to F \to 0$ such that $T \in \catT$ and $F\in\catF$.
In this case, $\catT^{\perp}=\catF$ and ${}^{\perp}\catF=\catT$ hold.
Hence, the subcategory $\catT$ is a torsion class closed under coproducts and $\catF$ is a torsion-free class closed under products.
Conversely, if $\catA$ is noetherian or $\catA$ has small coproducts,
then $(\catX,\catX^{\perp})$ is a torsion pair if and only if $\catX$ is a torsion class closed under coproducts.
A torsion pair $(\catT,\catF)$ is said to be \emph{hereditary}
if $\catT$ is closed under subobjects, that is, it is a Serre subcategory.

Let $\catA$ be a Grothendieck category.
See \cite{Ste} for a detailed account of Grothendieck categories.
The subcategory of $\catA$ consisting of noetherian objects (resp.\ finitely presented objects) is denoted by $\catA_{\noeth}$ (resp.\ $\catA_{\fp}$).
If $\catA$ is locally noetherian, then $\catA_{\noeth}=\catA_{\fp}$.
A \emph{localizing subcategory} is a Serre subcategory closed under coproducts. 
If $\catX$ is localizing, the quotient functor $Q\colon \catA \to \catA/\catX$ has a right adjoint.
Then $\catX$ and $\catA/\catX$ are also Grothendieck categories.
Moreover, if $\{G_i\}_{i\in I}$ is a small generating set of $\catA$,
then $\{Q(G_i)\}_{i\in I}$ is a small generating set of $\catA/\catX$.
If $\catA$ is locally noetherian, then $\catX$ and $\catA/\catX$ are also locally noetherian.
In this case, $(\catA/\catX)_{\noeth}=\catA_{\noeth}/(\catX\cap\catA_{\noeth})$ holds.

In a Grothendieck category, every nonzero finitely generated object has a maximal subobject (see \cite[Lemma 2.5]{Kra-KS} for example).
From this, we obtain the following observation.
\begin{lem}\label{prp:Nakayama}
Let $\catA$ be a Grothendieck category and $M \in \catA$ a finitely generated object.
If $\catA(M,S)=0$ for any simple object $S$,
then $M=0$.\qed
\end{lem}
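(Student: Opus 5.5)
The plan is to argue by contraposition: assuming $M\neq 0$, I will produce a simple object $S$ with $\catA(M,S)\neq 0$. The only input is the fact recalled just before the statement, namely that in a Grothendieck category every nonzero finitely generated object has a maximal subobject (\cite[Lemma 2.5]{Kra-KS}).

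So suppose $M$ is finitely generated and $M\neq 0$. By that fact, $M$ has a maximal subobject $N\subsetneq M$, that is, a proper subobject not strictly contained in any other proper subobject of $M$. Set $S:=M/N$, which is nonzero because $N$ is proper. By the correspondence between subobjects of $M/N$ and subobjects of $M$ containing $N$ (valid in any abelian category), the maximality of $N$ means $S$ has no subobjects other than $0$ and $S$. Hence $S$ is simple.

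The canonical projection $M\surj S$ is then a nonzero morphism, since $S\neq 0$. This contradicts $\catA(M,S)=0$, so $M=0$.

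There is essentially no obstacle here. All the real content is in the existence of maximal subobjects, which we take from the cited lemma. That existence is where finite generation is used: it is a Zorn's lemma argument, which works because a directed union of proper subobjects of a finitely generated object is again proper.
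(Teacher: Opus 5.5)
Your proof is correct and follows the paper's own argument. The paper gives no written proof; it derives the lemma directly from the fact that every nonzero finitely generated object in a Grothendieck category has a maximal subobject, and the quotient by that subobject is a simple object receiving a nonzero map from $M$, exactly as you do.
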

\begin{lem}
Every nonzero locally noetherian Grothendieck category has a simple object.\qed
\end{lem}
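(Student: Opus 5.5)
Since $\catA$ is nonzero, it has a nonzero object, and the plan is to produce a nonzero noetherian object and then a simple quotient of it. By definition, a locally noetherian Grothendieck category has a small generating set of noetherian objects, so every object is a quotient of a coproduct of such generators.

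First, take a nonzero object $M\in\catA$. Since the noetherian generators $\{G_i\}$ generate $\catA$, there is an epimorphism $\bigoplus_j G_{i_j}\surj M$. Because $M\neq 0$, some summand $G_{i_j}\to M$ must be nonzero. Put $N:=G_{i_j}$; it is nonzero and noetherian, hence in particular finitely generated.

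Next, apply the fact quoted just before \cref{prp:Nakayama}: every nonzero finitely generated object in a Grothendieck category has a maximal subobject $N'\subsetneq N$. Then $S:=N/N'$ is nonzero, and by the correspondence between subobjects of $N/N'$ and subobjects of $N$ containing $N'$, it has no proper nonzero subobjects. So $S$ is simple. Equivalently, one could invoke \cref{prp:Nakayama} directly: if $\catA(N,S)=0$ for every simple $S$, then $N=0$, which is a contradiction, so some simple object exists.

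Nothing here is a real obstacle. The only point needing care is that a nonzero category has a nonzero generator, and this is exactly what the epimorphism from a coproduct of generators provides. Noetherianity is stronger than needed; finite generation of some nonzero object is enough.
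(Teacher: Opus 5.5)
Your proposal is correct and follows essentially the argument the paper has in mind. The paper states this lemma with \qed right after observing that every nonzero finitely generated object of a Grothendieck category has a maximal subobject. Your proof supplies the two routine steps: a nonzero noetherian (hence finitely generated) object exists, and its quotient by a maximal subobject is simple.
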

\subsection{Atom spectra}
In this subsection, we review the definition and basic results of atom spectra following Kanda's works \cite{Kan-Serre,Kan-spcl,Kan-CatSp}.
Throughout this subsection, let $\catA$ be an abelian category.

An object $H\in \catA$ is said to be \emph{monoform}
if for any nonzero subobject $L$ of $H$,
there exists no common nonzero subobject of $H$ and $H/L$.
Two monoform objects $H_1$ and $H_2$ in $\catA$ are \emph{atom-equivalent}
if there exists a common nonzero subobject of $H_1$ and $H_2$.
Denote by $\Spec \catA$ the set of atom-equivalence classes of monoform objects,
and call it the \emph{atom spectrum} of $\catA$.
The \emph{atom support} of $M\in \catA$ is defined by
\[
\Supp_{\catA} M := \{\ol{H} \in \Spec \catA \mid \text{$H$ is subquotient of $M$}\}.
\]
We often write $\Supp M := \Supp_{\catA} M$ for simplicity.
For $M\in \catA$ and a monoform object $H$,
note that $\ol{H} \in \Supp M$ if and only if there is a subobject $N$ of $M$
such that $M/N$ and $H$ have a common nonzero subobject.
The atom spectrum $\Spec \catA$ carries a topology whose open basis consists of $\{\Supp M \mid M\in\catA\}$.

An element of the following set is called an \emph{associated atom} of $M\in\catA$:
\[
\Ass M := \{\ol{H} \in \Spec \catA \mid \text{$H$ is a subobject of $M$}\}.
\]
We often write $\Ass M := \Ass_{\catA} M$ for simplicity.
If $M$ is nonzero noetherian object, then $\Ass M$ is a non-empty finite set.
The following observation is useful to study associated atoms.
\begin{lem}\label{prp:monoform sub lem}
Let $\Phi$ be an open subset of $\Spec \catA$, and let $M \in \catA$.
Then for any $\alpha \in \Ass M \cap \Phi$,
there is a monoform subobject $H$ of $M$
such that $\ol{H}=\alpha$ and $\Supp H \subseteq \Phi$.
\end{lem}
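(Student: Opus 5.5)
Take $H_0$ to be a monoform subobject of $M$ with $\ol{H_0}=\alpha$; one exists by the definition of $\Ass M$. The plan is to shrink $H_0$ to a nonzero subobject $H$ whose support already lies in $\Phi$. This is enough, because every nonzero subobject of a monoform object is again monoform and atom-equivalent to it. So any nonzero $H\subseteq H_0$ is monoform, $\ol{H}=\alpha$, and $H\subseteq M$. Only the condition $\Supp H\subseteq\Phi$ remains.

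We use openness of $\Phi$ together with $\alpha\in\Phi$. Since the sets $\Supp N$ ($N\in\catA$) form an open basis, there is an object $N$ with $\alpha\in\Supp N\subseteq\Phi$. By the characterization of support recalled above, $\alpha\in\Supp N$ means the following. There is a subobject $N'\subseteq N$ such that $N/N'$ and $H_0$ have a common nonzero subobject $H$. Set $H$ to be this common subobject, which is a nonzero subobject of $H_0$.

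It then remains to check $\Supp H\subseteq\Phi$. Here $H$ is a subobject of $N/N'$, hence a subquotient of $N$. Any subquotient of $H$ is therefore a subquotient of $N$. So $\Supp H\subseteq\Supp N\subseteq\Phi$ by the definition of atom support.

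The only point needing care is the claim that a nonzero subobject $H$ of a monoform $H_0$ is monoform and atom-equivalent to $H_0$. Atom-equivalence is immediate, since $H$ itself is a common nonzero subobject of $H$ and $H_0$. For monoformness, take $0\neq L\subseteq H$ and suppose $H$ and $H/L$ share a nonzero subobject $K$. Since $H/L\subseteq H_0/L$, $K$ is then a common nonzero subobject of $H_0$ and $H_0/L$. This contradicts monoformness of $H_0$. This standard fact from Kanda's theory is the main step; the rest is unwinding definitions.
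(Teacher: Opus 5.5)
Your proof is correct and follows essentially the same route as the paper: take a monoform subobject of $M$ representing $\alpha$, then take a common nonzero subobject with an object whose support contains $\alpha$ and lies in $\Phi$. The only difference is that the paper cites Kanda's definition of the topology to obtain a monoform $H''$ with $\ol{H''}=\alpha$ and $\Supp H''\subseteq\Phi$ directly, whereas you derive this from the open basis $\{\Supp N\}$ and the support characterization, with $N/N'$ playing the role of $H''$.
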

\begin{proof}
By the definition of associated atoms,
there is a monoform subobject $H'$ of $M$ such that $\alpha = \ol{H'}$.
Since $\Phi$ is open and $\alpha \in \Phi$,
there is also a monoform object $H''$ such that $\Supp H'' \subseteq \Phi$ and $\ol{H'}=\alpha=\ol{H''}$ (see \cite[Definition 3.7]{Kan-Serre}).
Then we have a nonzero common subobject $H$ of $H'$ and $H''$,
and $H$ is a desired object.
\end{proof}

For any subset $\Phi$ of $\Spec \catA$, define subcategories of $\catA$ by
\[
\Supp^{-1}(\Phi):=\{M\in \catA \mid \Supp M \subseteq \Phi\},\quad
\Ass^{-1}(\Phi) :=\{M\in \catA \mid \Ass M \subseteq \Phi\}.
\]
Then $\Supp^{-1}(\Phi)$ is a Serre subcategory and $\Ass^{-1}(\Phi)$ is a torsion-free class.

Let $\catA$ be a locally noetherian Grothendieck category.
Then the inclusion $\catA_{\noeth} \inj \catA$ induces a homeomorphism
$\Spec \catA_{\noeth} \isoto \Spec \catA$ (cf.\ \cite[Proposition 5.3]{Kan-Serre}).
Under this identification, for a subset $\Phi$ of $\Spec \catA$,
we have
\[
\Supp^{-1}_{\catA}(\Phi)\cap \catA_{\noeth}= \Supp^{-1}_{\catA_{\noeth}}(\Phi),\quad
\Ass^{-1}_{\catA}(\Phi)\cap \catA_{\noeth}= \Ass^{-1}_{\catA_{\noeth}}(\Phi).
\]
The following fundamental result shows that localizing subcategories are controlled by the atom spectrum.
\begin{fct}[{\cite[Theorem 5.5]{Kan-Serre}}]\label{fct:Kanda classify Serre}
Let $\catA$ be a locally noetherian Grothendieck category.
Then there exist bijections among the following sets:
\begin{enur}
\item 
the set of localizing subcategories of $\catA$.
\item 
the set of Serre subcategories of $\catA_{\noeth}$.
\item
the set of open subsets of $\Spec \catA$.
\end{enur}
These bijections are given as follows:
\begin{itemize}
\item
The bijection between (i) and (ii) is given by $\catX \mapsto \catX \cap \catA_{\noeth}$ and $\catS \mapsto \la \catS \ra_{\mathrm{loc}}$,
where $\catX$ is a localizing subcategory of $\catA$,
$\catS$ is a Serre subcategory of $\catA_{\noeth}$,
and $\la \catS \ra_{\mathrm{loc}}$ is the smallest localizing subcategory of $\catA$ containing $\catS$.
\item
The bijection between (ii) and (iii) is given by $\catX \mapsto \Supp \catX:=\bigcup_{X\in\catX}\Supp X$ and $\Phi \mapsto \Supp_{\catA_{\noeth}}^{-1}\Phi$,
where $\catX$ is a Serre subcategory of $\catA_{\noeth}$ and $\Phi$ is an open subset of $\Spec \catA$.
\end{itemize}
\end{fct}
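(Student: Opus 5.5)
The plan is to prove the two bijections separately, using throughout that in a locally noetherian Grothendieck category every object is the directed union of its noetherian subobjects.

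\emph{Bijection (i)$\leftrightarrow$(ii).} For a localizing subcategory $\catX$, the class $\catX\cap\catA_{\noeth}$ is clearly Serre in $\catA_{\noeth}$. Conversely, for a Serre subcategory $\catS$ of $\catA_{\noeth}$, define $\wti{\catS}$ as the class of objects all of whose noetherian subobjects lie in $\catS$. I will check that $\wti{\catS}$ is localizing.
\begin{itemize}
\item Subobjects are immediate.
\item For a quotient $M\surj C$, a noetherian $N'\subseteq C$ is finitely generated. Since $M$ is the directed union of its noetherian subobjects, $N'$ is the image of some noetherian subobject of $M$, hence lies in $\catS$.
\item For an extension $0\to K\to M\to C\to 0$ and a noetherian $N\subseteq M$, we have $N\cap K\in\catS$ and $N/(N\cap K)\inj C$. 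Hence $N\in\catS$ by extension-closure.
\item For coproducts, a noetherian subobject of $\bigoplus X_i$ lies in a finite subsum. That finite subsum lies in $\wti{\catS}$ by the extension case.
\end{itemize}
Then $\wti{\catS}\cap\catA_{\noeth}=\catS$. Any localizing subcategory containing $\catS$ contains $\wti{\catS}$, since each object of $\wti{\catS}$ is a directed union, i.e.\ a quotient of a coproduct, of objects of $\catS$. So $\wti{\catS}=\la\catS\ra_{\mathrm{loc}}$. The same directed-union remark gives $\la\catX\cap\catA_{\noeth}\ra_{\mathrm{loc}}=\catX$.

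\emph{Bijection (ii)$\leftrightarrow$(iii).} Here I work in $\catA_{\noeth}$, using $\Spec\catA_{\noeth}\iso\Spec\catA$. The set $\Supp\catX$ is open as a union of basic opens, and $\Supp^{-1}\Phi$ is Serre.

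To get $\Supp(\Supp^{-1}\Phi)=\Phi$ for $\Phi$ open, take $\alpha\in\Phi$. As in the proof of \cref{prp:monoform sub lem}, choose a monoform $H$ with $\ol{H}=\alpha$ and $\Supp H\subseteq\Phi$. Replace $H$ by a nonzero noetherian subobject; this is still monoform, represents $\alpha$, and its support shrinks.

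The main step is $\Supp^{-1}(\Supp\catS)=\catS$, i.e.\ a noetherian $M$ with $\Supp M\subseteq\Supp\catS$ lies in $\catS$. I would argue by contradiction using noetherianity. If $M\notin\catS$, pick $N\subseteq M$ maximal with $M/N\notin\catS$, and set $M'=M/N$. Then $M'\neq0$, every proper quotient of $M'$ lies in $\catS$, and $\Supp M'\subseteq\Supp\catS$. Take $\alpha\in\Ass M'$, realized by a monoform $H\subseteq M'$. Since $\alpha\in\Supp X$ for some $X\in\catS$, $H$ shares a nonzero subobject $H'$ with a subquotient of $X$, so $H'\in\catS$. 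Then $M'/H'\in\catS$, and extension-closure forces $M'\in\catS$, a contradiction.

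I expect this last step to be the main obstacle. It is where the interplay between atoms and the Serre condition is genuinely used; the rest is formal bookkeeping with directed unions.
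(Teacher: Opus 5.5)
Your proof is correct. The paper does not prove this Fact; it imports it from Kanda. Your self-contained argument is essentially the standard proof of that result. It has two parts: describing $\la\catS\ra_{\mathrm{loc}}$ as the objects all of whose noetherian subobjects lie in $\catS$, and proving $\Supp^{-1}(\Supp\catS)\subseteq\catS$ by noetherian induction on a maximal $N$ with $M/N\notin\catS$, where an associated atom of $M/N$ yields a nonzero subobject lying in $\catS$.
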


\begin{rmk}
Let $\catA$ be a locally noetherian Grothendieck category.
The atom spectrum is closely related to the classical Gabriel spectrum.
Indeed, the Gabriel spectrum of $\catA$ is the set of isomorphism classes of indecomposable injective objects.
The injective hull of a monoform object is an indecomposable injective object, and this construction gives a homeomorphism from $\Spec\mathcal A$ to the Gabriel spectrum,
equivalently to the Ziegler spectrum (see \cite[Section 5]{Kan-Serre}).
Under this identification, the classification of localizing subcategories in \cref{fct:Kanda classify Serre} recovers the corresponding classification by Herzog and Krause \cite{Herzog-Spec, Krause-Spec}.

The advantage of using the atom spectrum here is that it is defined purely in terms of monoform objects.
In particular, it can be defined for noetherian abelian categories without referring to injective objects.
Moreover, monoform objects may be viewed as categorical analogues of the $R$-modules $R/\pp$,
where $\pp$ is a prime ideal of a commutative noetherian ring $R$.
This point of view allows us to use the atom spectrum with the same intuition as the prime spectrum.
\end{rmk}

\begin{prp}[{cf.\ \cite{AS}}]\label{prp:hered tors by atom}
Let $\catA$ be a locally noetherian Grothendieck category.
For an open subset $\Phi$ of $\Spec \catA$,
put $\catX_{\Phi}:=\Supp^{-1}(\Phi)$ and $\catY_{\Phi}:=\Ass^{-1}(\Spec \catA \setminus \Phi)$.
Then $(\catX_\Phi, \catY_\Phi)$ is a hereditary torsion pair in $\catA$.
\end{prp}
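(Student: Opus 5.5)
To prove that $(\catX_\Phi,\catY_\Phi)$ is a hereditary torsion pair, I will show three things: $\catX_\Phi$ is a localizing subcategory, $\catY_\Phi = \catX_\Phi^{\perp}$, and then invoke the general fact recalled above. That fact says that in a category with small coproducts, $(\catX,\catX^\perp)$ is a torsion pair whenever $\catX$ is a torsion class closed under coproducts. Heredity is then automatic, because $\catX_\Phi$ is a Serre subcategory.

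\emph{$\catX_\Phi$ is localizing.} $\Supp^{-1}(\Phi)$ is a Serre subcategory for any subset $\Phi$, as recalled above. For closure under coproducts, a monoform subquotient $H$ of $\bigoplus_i M_i$ with $M_i\in\catX_\Phi$ has to be handled. Since $\catA$ is locally noetherian, I may shrink $H$ to a noetherian monoform subobject with the same atom. Such an $H$ is a subquotient of a finite subcoproduct $\bigoplus_{i\in F} M_i$. For finite coproducts we have $\Supp(M\oplus N)=\Supp M\cup\Supp N$, by the exact sequence and the Serre property. Hence $\ol H\in\Phi$.

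Alternatively, \cref{fct:Kanda classify Serre} can be used directly. The open set $\Phi$ corresponds to the localizing subcategory $\la \Supp^{-1}_{\catA_{\noeth}}\Phi\ra_{\mathrm{loc}}$, and one checks that this equals $\Supp^{-1}(\Phi)$, since every object is the union of its noetherian subobjects and atom supports are compatible with directed unions. Either route gives that $\catX_\Phi$ is localizing.

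\emph{$\catY_\Phi\subseteq \catX_\Phi^\perp$.} Let $M\in\catY_\Phi$, let $X\in\catX_\Phi$, and suppose $f\colon X\to M$ is nonzero. Then $\Ima f\neq0$ is a subobject of $M$ and a quotient of $X$, so $\Ima f\in\catX_\Phi$. Every nonzero object of a locally noetherian category has a nonzero noetherian subobject, and that subobject has a nonempty $\Ass$. So $\Ass(\Ima f)\neq\emptyset$. We also have $\Ass(\Ima f)\subseteq \Supp(\Ima f)\subseteq\Phi$ and $\Ass(\Ima f)\subseteq\Ass M\subseteq \Spec\catA\setminus\Phi$, which is a contradiction.

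\emph{$\catX_\Phi^\perp\subseteq\catY_\Phi$.} Let $M\in\catX_\Phi^\perp$ and suppose some $\alpha\in\Ass M\cap\Phi$ exists. By \cref{prp:monoform sub lem} there is a monoform subobject $H\subseteq M$ with $\Supp H\subseteq\Phi$. Thus $H\in\catX_\Phi$, and the inclusion $H\inj M$ is a nonzero map, contradicting $M\in\catX_\Phi^\perp$. Hence $\Ass M\subseteq\Spec\catA\setminus\Phi$.

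Finally, $\catX_\Phi$ is a torsion class closed under coproducts in a Grothendieck category, so $(\catX_\Phi,\catX_\Phi^\perp)=(\catX_\Phi,\catY_\Phi)$ is a torsion pair. It is hereditary because $\catX_\Phi$ is Serre.

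The only real obstacle is coproduct-closure of $\Supp^{-1}(\Phi)$ in the big category, that is, the reduction of a monoform subquotient to a finite subcoproduct. The case $\Phi$ open is where \cref{prp:monoform sub lem} enters, and that step is straightforward.
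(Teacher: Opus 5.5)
Your proposal is correct and follows the paper's proof essentially step for step. You reduce to $\catX_\Phi^{\perp}=\catY_\Phi$ using that $\catX_\Phi$ is a coproduct-closed Serre subcategory, rule out nonzero maps into $\catY_\Phi$ because the image would have empty $\Ass$, and obtain the reverse inclusion from \cref{prp:monoform sub lem}. The only addition is your explicit argument for coproduct-closure of $\Supp^{-1}(\Phi)$, which works by reducing a noetherian monoform subquotient to a finite subcoproduct; the paper simply takes this closure as known.
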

\begin{proof}
Since $\catX_{\Phi}$ is a Serre subcategory closed under coproducts,
$(\catX_\Phi, (\catX_\Phi)^{\perp})$ is a hereditary torsion pair.
It is enough to show that $(\catX_\Phi)^{\perp}= \catY_\Phi$.
Take a morphism $f\colon X \to Y$ such that $X \in \catX_\Phi$ and $Y \in \catY_\Phi$.
Then $\Ima f \in \catX_\Phi \cap \catY_\Phi$, and thus $\Ass(\Ima f)$ is empty.
As $\catA$ is locally noetherian, this implies $\Ima f =0$.
Hence, we have $\catY_\Phi \subseteq (\catX_\Phi)^{\perp}$.
Conversely, suppose that $M \not\in \catY_\Phi$.
Then $\Ass M \cap \Phi$ is nonempty, and take $\alpha \in \Ass M \cap \Phi$.
There is a monoform subobject $H$ of $M$ such that $\ol{H}=\alpha$ and $\Supp H \subseteq \Phi$.
As $H \in \catX_{\Phi}$, we have $M \not\in (\catX_\Phi)^{\perp}$.
Therefore, we obtain the assertion. 
\end{proof}

The behavior of atom spectra under localization is as follows.
\begin{fct}[{\cite[Theorem 5.4 and Proposition 5.6]{Kan-CatSp}}]\label{fct:Spec of quot}
Let $\catA$ be a Grothendieck category, and let $\catX$ be a localizing subcategory of $\catA$.
Let $Q\colon \catA \to \catA/\catX$ denote the quotient functor,
and $R\colon \catA/\catX \to \catA$ its right adjoint.
Then the assignments $\ol{H} \mapsto \ol{Q(H)}$ and $\ol{H'} \mapsto \ol{R(H')}$
yield a homeomorphism $\Spec (\catA/\catX) \iso \Spec \catA \setminus \Supp_{\catA}\catX$.
Under this identification, the following hold:
\begin{enua}
\item
For every $M \in \catA$, we have
\[
\Ass_{(\catA/\catX)} Q(M) \supseteq \Ass_\catA M \setminus \Supp_{\catA} \catX,\quad
\Supp_{(\catA/\catX)} Q(M) = \Supp_{\catA} M \setminus \Supp_{\catA} \catX.
\]
\item
For every $N \in \catA/\catX$, we have
\[
\Ass_{(\catA/\catX)} N = \Ass_{\catA} R(N),\quad
\Supp_{(\catA/\catX)} N = \Supp_{\catA} R(N) \setminus \Supp_{\catA} \catX.
\]
\end{enua}
\end{fct}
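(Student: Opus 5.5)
The plan is to work throughout with the standard properties of the Gabriel localization. The quotient functor $Q$ is exact. The right adjoint $R$ is fully faithful and left exact, with $QR\iso\id$. For $M\in\catA$, we have $Q(M)=0$ if and only if $M\in\catX$. Objects of the form $R(N)$ are $\catX$-torsion-free, i.e.\ they lie in $\catX^{\perp}$.

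\textbf{Step 1: transferring monoform objects.} First I will show that a monoform object $H\in\catA$ with $\ol{H}\notin\Supp_{\catA}\catX$ has no nonzero subobject in $\catX$. Indeed, a nonzero subobject of $H$ is monoform and atom-equivalent to $H$. So if it lay in $\catX$, we would get $\ol{H}\in\Supp\catX$. Hence $Q(H)\neq 0$.

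Next I check that $Q(H)$ is monoform. Nonzero subobjects of $Q(H)$ are of the form $Q(L)$ with $L\subseteq H$ and $L\notin\catX$. A common nonzero subobject of $Q(H)$ and $Q(H)/Q(L)=Q(H/L)$ would pull back to subobjects $K\subseteq H$ and $K'\subseteq H/L$, neither in $\catX$, with $Q(K)\iso Q(K')$. Via the unit maps into $RQ$ and torsion-freeness, I expect to produce a common nonzero subobject of $H$ and $H/L$ after intersecting appropriately. This contradicts monoformness.

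The same argument shows that atom-equivalent $H_1,H_2$ outside $\Supp\catX$ have atom-equivalent images. So $\ol{H}\mapsto\ol{Q(H)}$ is a well-defined map $\Spec\catA\setminus\Supp\catX\to\Spec(\catA/\catX)$.

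\textbf{Step 2: the inverse map.} For the inverse I will not try to prove directly that $R(H')$ is monoform, which I expect to be the main obstacle. Instead I will take a monoform subobject $H\subseteq R(H')$; in the locally noetherian case it exists by choosing a nonzero noetherian subobject. Then $Q(H)\subseteq QR(H')\iso H'$ is nonzero, so $\ol{Q(H)}=\ol{H'}$ because $H'$ is monoform. Moreover $\ol{H}\notin\Supp\catX$, since $R(H')$ is torsion-free; otherwise $H$ would contain a nonzero subobject in $\catX$. This shows surjectivity and gives the inverse. Identifying this inverse with $\ol{R(H')}$ amounts to checking that $R(H')$ is monoform. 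That should follow from the injection $H'\inj QR(H')$ together with the fact that every nonzero subobject of $R(H')$ meets $R(Q(\text{it}))$-essentially. I would handle this using Kanda's essentiality arguments.

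Injectivity of $\ol{H}\mapsto\ol{Q(H)}$ follows similarly. Suppose $Q(H_1)$ and $Q(H_2)$ share a subobject. Pulling back to subobjects of $H_1$ and $H_2$ and using the unit $H_i\inj RQ(H_i)$ (injective by torsion-freeness), we obtain a common nonzero subobject of $H_1$ and $H_2$ inside $RQ$ of the shared piece.

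\textbf{Step 3: supports and associated atoms.} For (1), exactness of $Q$ sends subquotients of $M$ to subquotients of $Q(M)$, which gives $\supseteq$ for $\Supp$. Conversely, a subquotient of $Q(M)$ is $Q$ of a subquotient of $M$, because subobjects of $Q(M)$ lift. For $\Ass$, a monoform subobject $H\subseteq M$ with $\ol{H}\notin\Supp\catX$ gives $Q(H)\subseteq Q(M)$ monoform by Step 1.

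For (2), apply (1) to $M=R(N)$ together with $QR(N)\iso N$ to get the $\Supp$ formula. For $\Ass$, the atoms of $\Ass R(N)$ avoid $\Supp\catX$ by torsion-freeness, and (1) gives $\Ass R(N)\subseteq\Ass N$. Conversely, given a monoform $H'\subseteq N$, left exactness of $R$ gives $R(H')\subseteq R(N)$, and by Step 2 a monoform subobject of $R(H')$ represents $\ol{H'}$.

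Finally, continuity in both directions follows from the support formulas. The formula $\Supp Q(M)=\Supp M\setminus\Supp\catX$ shows that basic opens correspond to basic opens intersected with the subspace, and conversely via $R$. So the bijection is a homeomorphism.
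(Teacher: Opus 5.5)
The paper does not prove this fact; it quotes it from Kanda, so your argument has to stand on its own. Its architecture is right: transport monoform objects along the exact functor $Q$, use $R(N)\in\catX^{\perp}$, read off the $\Supp$/$\Ass$ formulas, and get the homeomorphism from basic opens. But there is a genuine gap exactly where you locate the ``main obstacle''. The statement concerns an arbitrary Grothendieck category, while your justification in Step 2 for a monoform subobject of $R(H')$ works only in the locally noetherian case. In general a nonzero object may have no monoform subobject at all; the regular module over $k[x_1,x_2,\dots]/(x_1^2,x_2^2,\dots)$ is an example, since it has zero socle and a unique prime. Moreover, the statement explicitly asserts the inverse $\ol{H'}\mapsto\ol{R(H')}$, so you must prove that $R(H')$ is monoform, and you defer this to unspecified essentiality arguments. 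Step 3 needs the same missing input in two places:
\begin{enumerate}
\item For $\Supp Q(M)\subseteq\Supp M\setminus\Supp\catX$, you must realize the atom of a monoform subquotient $H'=Q(N)$ of $Q(M)$ by a monoform subquotient of $M$.
\item For $\Ass N\subseteq\Ass R(N)$, you must realize it by a monoform subobject of $R(H')\subseteq R(N)$.
\end{enumerate}

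In fact this is the easy step, and the fixes are as follows.
\begin{enumerate}
\item Monoformness of $R(H')$. Since $\catX^{\perp}$ is closed under subobjects, every nonzero subobject $K$ of $R(H')$ satisfies $Q(K)\neq0$. Suppose $0\neq L\subseteq R(H')$ and $K$ is a common nonzero subobject of $R(H')$ and $R(H')/L$. Then exactness of $Q$ and $QR\iso\id$ make $Q(K)\neq 0$ a common subobject of $H'$ and $H'/Q(L)$ with $Q(L)\neq 0$, contradicting monoformness of $H'$.
\item The bijection. So $R(H')$ is monoform and $\catX$-torsion-free, hence $\ol{R(H')}\notin\Supp\catX$, and $QR(H')\iso H'$. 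Combined with $\ol{H}=\ol{RQ(H)}$ for torsion-free monoform $H$ (via the injective unit), both composites are the identity. This makes your separate injectivity argument and the noetherian hypothesis unnecessary.
\item Step 3. Use $N/t(N)\inj RQ(N)\iso R(H')$, where $t(N)$ is the $\catX$-torsion part. It is nonzero, hence monoform and atom-equivalent to $R(H')$, and it is a subquotient of $M$.
\item Step 1. Choose the lift of $Y\subseteq Q(H)$ carefully. For an arbitrary $L$ with $Q(L)=Y$, the object $H/L$ may contain $\catX$-torsion, so its unit is not injective and your intersection need not land in $H/L$. With $L:=\eta_H^{-1}(R(Y))$ one has $H/L\inj R(Q(H)/Y)$. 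Now pull back a lift $K\subseteq H$ of the common subobject $Z$, mapped via $K\inj RQ(K)\iso R(Z)\inj R(Q(H)/Y)$, against $H/L$. This gives a common subobject of $H$ and $H/L$ whose image under $Q$ is $Z\neq 0$, as you intended.
\end{enumerate}
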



\subsection{Symmetric monoidal categories}
In this subsection, we review some basic facts on symmetric monoidal categories.
For details, see \cite{Kelly,CWM}.
Throughout this subsection, let $\catA$ be a symmetric monoidal closed category.
We write $\otimes_{\catA}$ for the tensor functor and $I_{\catA}$ for the unit object.
For objects $M,N\in \catA$, the internal Hom from $M$ to $N$ is denoted by $\catA[M,N]$.
If there is no ambiguity, we simply write $\otimes$, $I$, and $[M,N]$.
As $(-\otimes M,[M,-])$ is an adjoint pair,
the functor $-\otimes M$ preserves colimits and $[M,-]$ preserves limits.

An object $D \in \catA$ is \emph{dualizable} if there is $E\in \catA$ such that $-\otimes E$ is a right adjoint of $-\otimes D$.
In this case, $E$ is called the \emph{dual} of $D$.
The dual $E$ is isomorphic to $D^{\vee}:=[D,I]$ by the following:
\[
\catA(-,[D,I])
\iso \catA(-\otimes D,I)
\iso \catA(-,I\otimes E)
\iso \catA(-,E).
\]
An argument analogous to the one above yields a natural isomorphism $-\otimes D^{\vee} \iso [D,-]$.
Dualizable objects are stable under direct sums, tensor products, internal Hom, and extensions.
That is, if $D$ and $E$ are dualizable objects,
then so are $D\oplus E$, $D\otimes E$, and $[D,E]$ (see \cite[Lemma 6.7]{HO}).
For an exact sequence $0 \to D \to X \to E \to 0$ such that $D$ and $E$ are dualizable,
the middle term $X$ is also dualizable (see \cite[Section 4, Lemma 7]{Bak}).

Let us explain another characterization of dualizable objects,
that is useful to prove several properties.
For objects $D$ and $E$,
the functor $-\otimes E$ is a right adjoint of $-\otimes D$
if and only if there are natural transformations $\id_{\catA} \to -\otimes D\otimes E$ and $-\otimes E\otimes D \to \id_\catA$ which satisfy the triangle identity.
It is also equivalent to the existence of morphisms $\eta\colon I \to D\otimes E$ and $\epsilon\colon E\otimes D \to I$ which make the following diagram commute:
\begin{equation}\label{diag:characterize dual}
\begin{tikzcd}
I \otimes D \ar[d,"\eta\otimes D"'] \ar[dr,"\iso"] & \\
D \otimes E \otimes D \ar[r,"D\otimes \epsilon"'] & D \otimes I,
\end{tikzcd}
\begin{tikzcd}
E \otimes I \ar[r,"E\otimes \eta"] \ar[rd,"\iso"'] & E \otimes D \otimes E \ar[d,"\epsilon \otimes E"]\\
& I \otimes E.
\end{tikzcd}
\end{equation}
Here, the diagonal arrows are the swap morphisms.
Composing the swap morphisms with $\eta$ and $\epsilon$, we can prove that $-\otimes E$ is also a left adjoint of $-\otimes D$.
Using this observation, we can prove that the canonical morphism $D \to D^{\vee\vee}$ is an isomorphism for any dualizable object $D$:
\[
{\catA}(-,D^{\vee\vee})
\iso {\catA}(- \otimes D^{\vee},I)
\iso {\catA}(- ,D).
\]
In particular, we have an isomorphism $-\otimes D \iso -\otimes D^{\vee\vee}\iso [D^{\vee},-]$,
which implies that $-\otimes D$ preserves colimits and limits if $D$ is dualizable.
When $\catA$ is an abelian category, it means that every dualizable object is flat,
that is, the functor $-\otimes D$ is exact for any dualizable object $D$.

Let $F\colon \catA \to \catB$ be a strong monoidal functor between symmetric monoidal closed categories.
Since $F$ preserves \eqref{diag:characterize dual},
for any dualizable object $D\in \catA$, the object $F(D)$ is also dualizable and $F(D^{\vee})\iso F(D)^{\vee}$ holds.

An object $L \in \catA$ is \emph{invertible} if there is $M\in \catA$ such that $L\otimes M \iso I$.
The set of isomorphism classes of invertible objects of $\catA$ is a group by the tensor product.
This group is called the \emph{Picard group} of $\catA$ and it is denoted by $\Pic \catA$.
If $L\otimes M \iso I$, then $-\otimes L$ is an autoequivalence on $\catA$ whose quasi-inverse is $-\otimes M$.
In particular, every invertible object is dualizable.

\subsection{Subcategories in symmetric monoidal abelian categories}
In this subsection,
let $\catA$ be a symmetric monoidal closed abelian category,
that is, $\catA$ is a symmetric monoidal closed category whose underlying category is an abelian category. 
Let us collect some basic definitions and properties of subcategories in symmetric monoidal closed abelian categories.
The material in this subsection seems to be well known to experts.
\begin{dfn}
Let $\catX$ be a subcategory of $\catA$.
\begin{enua}
\item
$\catX$ is a \emph{tensor ideal} if $X \otimes M \in \catX$ for any $X\in \catX$ and $M\in \catA$.
\item
$\catX$ is a \emph{Hom ideal} if $[M,X] \in \catX$ for any $X\in \catX$ and $M\in \catA$.
\item
$\catX$ is \emph{L-closed} if $X\otimes L \in \catX$ for any $X\in \catX$ and $L\in \Pic \catA$.
\end{enua}
\end{dfn}

If $\catA$ has a generating set, the above conditions can be checked using it as follows.
\begin{lem}\label{prp:criterion tensor CE}
Let $\{E_j\}_{j \in J}$ be a small generating set of $\catA$.
For a subcategory $\catX \subseteq \catA$ closed under coproducts and cokernels,
the following are equivalent:
\begin{enur}
\item
$\catX$ is a tensor ideal.
\item
For any $X \in \catX$ and $j\in J$,
we have $X\otimes E_j \in \catX$.
\end{enur}
\end{lem}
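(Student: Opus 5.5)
The implication (i)$\imply$(ii) is immediate, since each $E_j$ is an object of $\catA$. So the content lies in (ii)$\imply$(i). Fix $X\in\catX$ and an arbitrary $M\in\catA$; we want $X\otimes M\in\catX$. The plan is to write $M$ as a cokernel of a morphism between coproducts of generators. Then we use that $X\otimes -$ preserves colimits, and finally the closure properties of $\catX$.

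First, since $\{E_j\}_{j\in J}$ generates $\catA$, there is an epimorphism $\bigoplus_{k\in K} E_{j_k}\surj M$ for some small index family $(j_k)_{k\in K}$. Here we assume, as is implicit when speaking of generating sets in this context, that the relevant coproducts exist in $\catA$; this holds in the Grothendieck setting of interest. Applying the same reasoning to the kernel $N$ of this epimorphism gives $\bigoplus_{l\in L}E_{j_l}\surj N$. Composing with $N\inj \bigoplus_k E_{j_k}$ yields an exact sequence
\[
\bigoplus_{l\in L}E_{j_l}\xr{f}\bigoplus_{k\in K}E_{j_k}\to M\to 0 .
\]

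Next, $X\otimes-$ is a left adjoint (to $[X,-]$), so it is right exact and preserves coproducts. Tensoring with $X$ (using the symmetry to move $X$ to either side as needed) therefore gives an exact sequence
\[
\bigoplus_{l}(X\otimes E_{j_l})\xr{X\otimes f}\bigoplus_{k}(X\otimes E_{j_k})\to X\otimes M\to 0 .
\]
By (ii), each $X\otimes E_j$ lies in $\catX$. Since $\catX$ is closed under coproducts, both coproducts lie in $\catX$. Since $\catX$ is closed under cokernels, $X\otimes M\iso\Cok(X\otimes f)\in\catX$. This proves (i).

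The only point requiring care is the existence of the presentation and of the coproducts in $\catA$. This is harmless, because we only need the epimorphisms from coproducts of generators, and these are what "generating set" provides. The closure of $\catX$ under isomorphisms, which holds by the paper's conventions, handles the identification of $X\otimes M$ with the cokernel.
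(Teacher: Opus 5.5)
Your proof is correct and follows the same route as the paper: present $M$ as a cokernel of a map between coproducts of the generators $E_j$, tensor with $X$ (a left adjoint, hence right exact and coproduct-preserving), and conclude from the closure of $\catX$ under coproducts and cokernels. Your explicit construction of the presentation, and your remark that the coproducts must exist in $\catA$, simply spell out what the paper leaves implicit.
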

\begin{proof}
It is clear that (i) implies (ii).
We prove the converse.
For any $M \in \catA$,
take its presentation $\bigoplus_{j\in J} E_j^{\oplus A_j} \to \bigoplus_{j\in J} E_j^{\oplus B_j} \to M \to 0$ with respect to $\{E_j\}_{j \in J}$.
Tensoring any $X \in \catX$ with this presentation,
we obtain the right exact sequence $\bigoplus_{j\in J} (X\otimes E_j)^{\oplus A_j} \to \bigoplus_{j\in J} (X\otimes E_j)^{\oplus B_j} \to X \otimes M \to 0$.
As $\catX$ is closed under coproducts and cokernels,
we have $X \otimes M \in \catX$.
\end{proof}

\begin{lem}
Let $\{E_j\}_{j \in J}$ be a small generating set of $\catA$.
For a subcategory $\catX \subseteq \catA$ closed under products and kernels,
the following are equivalent:
\begin{enur}
\item
$\catX$ is a Hom ideal.
\item
For any $X \in \catX$ and $j\in J$,
we have $[E_j, X] \in \catX$.
\end{enur}
\end{lem}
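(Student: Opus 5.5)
This is dual to \cref{prp:criterion tensor CE}, and I will prove it in the same way, replacing presentations by copresentations obtained through the internal Hom. The implication (i)$\Rightarrow$(ii) is immediate from the definition of a Hom ideal. So assume (ii), and fix $X \in \catX$ and an arbitrary $M \in \catA$; I must show $[M,X]\in\catX$.

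First I take a presentation of $M$ with respect to the generating set, exactly as in the proof of \cref{prp:criterion tensor CE}:
\[
\bigoplus_{j\in J} E_j^{\oplus A_j} \to \bigoplus_{j\in J} E_j^{\oplus B_j} \to M \to 0 .
\]
Next I apply the contravariant functor $[-,X]$. Since $-\otimes Y$ is left adjoint to $[Y,-]$ and the tensor product is symmetric, we have natural isomorphisms
\[
\catA(Y,[M,X]) \iso \catA(Y\otimes M, X) \iso \catA(M,[Y,X]).
\]
Hence $[-,X]\colon \catA^{\op}\to\catA$ is a right adjoint of its opposite, so it sends colimits in $\catA$ to limits. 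In particular, it turns cokernels into kernels and coproducts into products. Applying it to the presentation therefore yields a left exact sequence
\[
0 \to [M,X] \to \prod_{j\in J} [E_j,X]^{A_j} \to \prod_{j\in J} [E_j,X]^{B_j}.
\]
The two right-hand terms are products, and the products exist because they are images under a right adjoint of coproducts that exist in $\catA$.

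Finally I conclude. By (ii), every $[E_j,X]$ lies in $\catX$. Since $\catX$ is closed under products, both product terms lie in $\catX$. The object $[M,X]$ is the kernel of a morphism between them, so closure under kernels gives $[M,X]\in\catX$.

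The only point needing care is the claim that $[-,X]$ converts the colimits forming the presentation into limits. This follows from the displayed adjunction isomorphism, using the symmetry of $\otimes$, and it requires nothing beyond the closed symmetric monoidal structure. One might worry whether a presentation exists when only a generating set is given, but it does: the paper's convention for a generating set in an abelian category with the relevant coproducts (as in the proof of \cref{prp:criterion tensor CE}) provides an epimorphism from a coproduct of generators onto $M$, and applying this again to the kernel gives the presentation.
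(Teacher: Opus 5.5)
Your proof is correct and is exactly what the paper intends by ``proved in essentially the same way as \cref{prp:criterion tensor CE}'': apply the colimit-to-limit functor $[-,X]$ to a presentation of $M$, then use closure under products and kernels. One harmless slip: because $[-,X]$ is contravariant, the left exact sequence should read $0 \to [M,X] \to \prod_{j\in J} [E_j,X]^{B_j} \to \prod_{j\in J} [E_j,X]^{A_j}$, with $A_j$ and $B_j$ interchanged relative to what you wrote; this does not affect the argument.
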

\begin{proof}
This is proved in essentially the same way as \cref{prp:criterion tensor CE}.
\end{proof}

We call a localizing tensor ideal a \emph{tensor localizing subcategory}.
Similarly, we define \emph{tensor Serre subcategories}, \emph{tensor torsion classes}, and so on.
The set of tensor localizing subcategories is denoted by $\Loc_\otimes \catA$.
The set of tensor torsion classes is denoted by $\tors_{\otimes}\catA$.
The set of Hom ideal torsion-free classes is denoted by $\torf_{\Hom}\catA$.
A \emph{tensor torsion pair} is a torsion pair $(\catT,\catF)$ such that $\catT$ is a tensor ideal.
It can be characterized as follows.
\begin{prp}[{\cite[Proposition 2.4]{JLV}}]\label{prp:tensor tor pair}
The following are equivalent for a torsion pair $(\catT,\catF)$ in $\catA$:
\begin{enur}
\item
$[\catT,\catF]=0$ holds.
\item
$\catT$ is a tensor ideal.
\item
$\catF$ is a Hom ideal.
\end{enur}
\end{prp}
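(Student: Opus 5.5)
The plan is to prove the cycle (i)$\Rightarrow$(ii)$\Rightarrow$(iii)$\Rightarrow$(i) purely from the tensor--Hom adjunction and the defining properties of a torsion pair. Throughout, write $[-,-]$ for the internal Hom. The two basic facts I will use are
\[
\catA(M,[X,Y])\iso\catA(M\otimes X,Y) \quad\text{and}\quad \catA(I,[X,Y])\iso\catA(X,Y).
\]
I also use that $\catT={}^{\perp}\catF$ and $\catF=\catT^{\perp}$ for a torsion pair.

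For (i)$\Rightarrow$(ii), fix $T\in\catT$ and $M\in\catA$; I must show $T\otimes M\in\catT$. Since $\catT={}^{\perp}\catF$, it suffices to check $\catA(T\otimes M,F)=0$ for every $F\in\catF$. By adjunction, $\catA(T\otimes M,F)\iso\catA(M,[T,F])$, and this vanishes because $[T,F]=0$ by (i).

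For (ii)$\Rightarrow$(iii), fix $F\in\catF$ and $M\in\catA$; I must show $[M,F]\in\catF=\catT^{\perp}$. For $T\in\catT$ we have $\catA(T,[M,F])\iso\catA(T\otimes M,F)$. This is zero because $T\otimes M\in\catT$ by (ii) and $\catA(\catT,\catF)=0$.

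For (iii)$\Rightarrow$(i), fix $T\in\catT$ and $F\in\catF$. By (iii), $[T,F]\in\catF$. To conclude $[T,F]=0$, the natural idea is to test against every object $M$: we have $\catA(M,[T,F])\iso\catA(M\otimes T,F)$, so it is enough to know $M\otimes T\in\catT$ for all $M$. That is exactly (ii), which (iii) does not obviously give. A cleaner route avoids this: by (iii), $[M,F]\in\catF$, and by symmetry of the tensor product,
\[
\catA(M,[T,F])\iso\catA(T\otimes M,F)\iso\catA(T,[M,F]).
\]
The right-hand side is zero because $T\in\catT$ and $[M,F]\in\catF$. Taking $M=[T,F]$ shows the identity morphism of $[T,F]$ is zero, so $[T,F]=0$. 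This in fact gives (iii)$\Rightarrow$(i) directly.

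The only real subtlety is this last step: one should use the symmetry isomorphism $M\otimes T\iso T\otimes M$ to move $T$ into the first argument of $\catA(-,-)$, rather than trying to show $[T,F]\in\catT$. Everything else is a direct application of the adjunction and the orthogonality relations of a torsion pair. No noetherian or Grothendieck hypotheses are needed.
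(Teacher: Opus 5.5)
Your proof is correct and matches the paper's argument essentially verbatim. The paper proves the same cycle (i)$\Rightarrow$(ii)$\Rightarrow$(iii)$\Rightarrow$(i), using only the tensor--Hom adjunction (with symmetry) and the relations $\catT={}^{\perp}\catF$, $\catF=\catT^{\perp}$. Your step (iii)$\Rightarrow$(i) is exactly its computation $\Hom_{\catA}(\catA,[\catT,\catF])\iso\Hom_{\catA}(\catT,[\catA,\catF])\subseteq\Hom_{\catA}(\catT,\catF)=0$.
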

\begin{proof}
For the reader's convenience, we write down the proof.
The implication (i)$\imply$(ii) follows from
$\Hom_{\catA}(\catT\otimes \catA,\catF)=\Hom_{\catA}(\catA, [\catT,\catF])=0$.
The implication (ii)$\imply$(iii) follows from
$\Hom_{\catA}(\catT, [\catA,\catF])=\Hom_{\catA}(\catT\otimes \catA, \catF) \subseteq \Hom_{\catA}(\catT, \catF)=0$.
The implication (iii)$\imply$(i) follows from
$\Hom_{\catA}(\catA, [\catT,\catF])=\Hom_{\catA}(\catT, [\catA,\catF]) \subseteq \Hom_{\catA}(\catT, \catF)=0$.
\end{proof}

\subsection{Grothendieck categories with symmetric monoidal structures}
Following \cite{HO},
if the underlying category of a symmetric monoidal closed category is a Grothendieck category,
we call it a \emph{Grothendieck cosmos}.
In this subsection, we collect basic properties of Grothendieck cosmoi.
We often assume that a Grothendieck category is locally noetherian.
However, the arguments in this subsection also apply to the locally coherent case.

We first recall a criterion for a Grothendieck category equipped with a symmetric monoidal structure to be a Grothendieck cosmos.
\begin{fct}[{cf.\ \cite[A.2 Proposition]{BKS}}]\label{prp:criterion for Groth cosmos}
Let $\catA$ be a Grothendieck category equipped with a symmetric monoidal structure.
Then the symmetric monoidal category $\catA$ is closed, that is, 
the functor $-\otimes M$ admits a right adjoint for each $M\in \catA$,
if and only if $-\otimes M$ preserves small colimits for each $M\in \catA$.
\end{fct}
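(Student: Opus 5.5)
The plan is to prove the two implications separately; one is formal and the other is an adjoint functor theorem argument.

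If $\catA$ is closed, then each $-\otimes M$ is left adjoint to $[M,-]$. Left adjoints preserve all small colimits, so this direction is immediate.

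For the converse, fix $M\in\catA$ and assume $F:=-\otimes M\colon \catA\to\catA$ preserves small colimits. We want a right adjoint of $F$. The strategy is to show that for each $N\in\catA$ the functor
\[
\catA(F(-),N)\colon \catA^{\op}\to \mathsf{Set}
\]
is representable. The representing object is then $[M,N]$, and the assignment $N\mapsto[M,N]$ becomes a functor by the Yoneda lemma. Since $F$ preserves colimits, $\catA(F(-),N)$ sends small colimits in $\catA$ to limits of sets. So it suffices to know that every functor $\catA^{\op}\to\mathsf{Set}$ that turns small colimits into limits is representable. This is the dual form of the special adjoint functor theorem. It requires $\catA$ to be cocomplete, locally small, co-well-powered, and to have a small generating set.

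I will check these hypotheses in order.

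1. A Grothendieck category is cocomplete by definition, since it has small coproducts and cokernels.
2. It is locally small by our conventions.
3. It has a generator $G$ by definition.
4. Co-well-poweredness is the one point needing justification. In an abelian category, quotients $X\surj Q$ correspond bijectively (up to isomorphism) to subobjects $\Ker(X\surj Q)\subseteq X$. So it suffices that $\catA$ is well-powered. This holds because a subobject $K\subseteq X$ is determined by the set $\catA(G,K)\subseteq\catA(G,X)$: $K$ equals the image of the induced map $G^{(\catA(G,K))}\to X$. Hence the subobjects of $X$ inject into the power set of $\catA(G,X)$, which is small.

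With these in hand, the dual special adjoint functor theorem gives a right adjoint $[M,-]$ to $-\otimes M$ for each $M$. Hence $\catA$ is closed.

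The main obstacle, such as it is, is applying the correct version of the adjoint functor theorem. The fully general solution-set condition is awkward here. I would instead use either the SAFT with the co-well-powered-plus-generator hypotheses verified above, or the fact that Grothendieck categories are locally presentable, so that any colimit-preserving functor out of them has a right adjoint. A more hands-on fallback is to construct $[M,N]$ directly. One takes the subobject of $\prod N^{\catA(G\otimes M,N)}$-type products cut out by compatibility equations, using a presentation by coproducts of $G$. This mirrors the proof of \cref{prp:criterion tensor CE}, but I expect the abstract route to be cleaner.
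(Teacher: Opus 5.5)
Your proof is correct. The forward direction is formal. The converse is the standard adjoint-functor/representability argument for Grothendieck categories, which is what the cited result \cite[A.2 Proposition]{BKS} rests on; the paper itself only records the statement as a Fact and gives no proof. Co-well-poweredness is the one hypothesis that needed checking, and your check via kernels and the generator $G$ is sound.
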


Following \cite{HO},
we consider the following conditions to ensure that the tensor product behaves well with respect to finitely presented objects:
\begin{cond}
Let $\catA$ be a Grothendieck cosmos.
\begin{description}
\item[(C1)]
The unit object $I$ is finitely presented.
\item[(C2)]
The Grothendieck category $\catA$ is generated by a small set of dualizable objects.
\end{description}
\end{cond}
The above conditions are satisfied by many explicit examples of Grothendieck cosmoi (see Section \ref{s:ex} and \cite[Section 6]{HO}).
\begin{fct}[{\cite[Proposition 6.9 and its proof]{HO}}]\label{fct:lfp base}
Let $\catA$ be a Grothendieck cosmos satisfying (C1) and (C2).
\begin{enua}
\item
If $M$ and $N$ are finitely presented, then so is $M\otimes N$.
\item
Every dualizable object is finitely presented.
\item
For any finitely presented object $M$,
there is an exact sequence $E_1 \to E_0 \to M \to 0$ such that $E_0$ and $E_1$ are dualizable.
\end{enua}
\end{fct}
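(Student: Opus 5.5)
We prove the three parts in the order (2), (3), (1). The one tool used throughout is that, for a dualizable object $D$, the functor $-\otimes D$ has both adjoints and satisfies $[D,-]\iso -\otimes D^{\vee}$. In particular it commutes with all colimits.

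For (2), let $D$ be dualizable. There are natural isomorphisms
\[
\catA(D,-)\iso \catA(I\otimes D,-)\iso \catA(I,[D,-])\iso \catA(I,-\otimes D^{\vee}).
\]
The functor $-\otimes D^{\vee}$ preserves filtered colimits, being a left adjoint. By (C1), $\catA(I,-)$ also preserves filtered colimits. Hence $\catA(D,-)$ commutes with filtered colimits, so $D$ is finitely presented.

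The same computation gives a slightly stronger statement that we will reuse. If $M$ is finitely presented and $E$ is dualizable, then
\[
\catA(M\otimes E,-)\iso\catA(M,[E,-])\iso\catA(M,-\otimes E^{\vee}),
\]
which again commutes with filtered colimits. Thus $M\otimes E$ is finitely presented.

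For (3), let $M$ be finitely presented. By (C2) there is an epimorphism from a coproduct of dualizable generators onto $M$. Since $M$ is finitely generated, some finite subcoproduct $E_0$ already surjects onto $M$. A finite direct sum of dualizable objects is dualizable, so $E_0$ is dualizable, and by (2) it is finitely presented.

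Let $K$ be the kernel of $E_0\surj M$. In a Grothendieck category, the kernel of an epimorphism from a finitely generated object onto a finitely presented object is finitely generated; this is the standard $\varinjlim$ argument applied to the directed union of finitely generated subobjects of $K$. So $K$ is finitely generated. Repeating the argument, a finite sum $E_1$ of dualizable generators surjects onto $K$, and we obtain the desired exact sequence $E_1\to E_0\to M\to 0$.

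For (1), let $M,N$ be finitely presented and take a presentation $E_1\to E_0\to N\to 0$ from (3). The functor $M\otimes-$ is right exact, being a left adjoint, so we get an exact sequence
\[
M\otimes E_1\to M\otimes E_0\to M\otimes N\to 0.
\]
By the remark after (2), both $M\otimes E_1$ and $M\otimes E_0$ are finitely presented. Finitely presented objects are closed under cokernels, so $M\otimes N$ is finitely presented.

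The only step requiring real care is the finite generation of the kernel $K$ in (3). Everything else reduces formally to (C1) together with the adjunction isomorphisms for dualizable objects.
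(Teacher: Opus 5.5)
Your proof is correct and follows essentially the same route as the cited argument \cite[Proposition 6.9]{HO}, which the paper quotes without reproducing: (2) comes from $\catA(D,-)\iso\catA(I,-\otimes D^{\vee})$ using only (C1), exactly as the paper's Remark notes; (3) comes from presenting objects of the resulting locally finitely presented category by finite sums of dualizable generators; and (1) comes from right exactness of $\otimes$. The one tacit point is that writing $K$ as a directed union of finitely generated subobjects needs $\catA$ to be locally finitely generated, which you do have from (C2) once (2) is proved.
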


A Grothendieck cosmos is said to be \emph{locally noetherian} 
if the underlying Grothendieck category is locally noetherian.
\begin{cor}\label{prp:noeth monoidal closed}
Let $\catA$ be a locally noetherian Grothendieck cosmos satisfying (C1) and (C2).
Then $\catA_{\noeth}$ is a symmetric monoidal closed abelian category.
\end{cor}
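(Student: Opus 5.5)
We must show that $\catA_{\noeth}$ is closed under $\otimes$, contains $I$, and is closed under internal Hom. Once this is done, the symmetric monoidal closed structure restricts: the associativity, unit and symmetry isomorphisms are inherited, and the adjunction $\catA_{\noeth}(L\otimes M,N)\iso\catA_{\noeth}(L,[M,N])$ holds because $\catA_{\noeth}$ is a full subcategory. It is abelian since $\catA$ is locally noetherian, and $\catA_{\noeth}=\catA_{\fp}$.

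First, $I$ is finitely presented by (C1), hence noetherian. By \cref{fct:lfp base}(1), $M\otimes N$ is finitely presented whenever $M$ and $N$ are, so $\catA_{\noeth}$ is closed under tensor products.

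The main step is closure under internal Hom. Let $M,N\in\catA_{\noeth}$. By \cref{fct:lfp base}(3), choose an exact sequence $E_1\to E_0\to M\to 0$ with $E_0,E_1$ dualizable. The functor $[-,N]$ sends colimits in the first variable to limits, because $\catA(X,[-,N])\iso\catA(X\otimes -,N)$ and $X\otimes-$ is right exact. Applying it gives a left exact sequence
\[0\to[M,N]\to[E_0,N]\to[E_1,N].\]
For dualizable $E$ we have $[E,N]\iso N\otimes E^{\vee}$. Here $E^{\vee}$ is dualizable, hence finitely presented by \cref{fct:lfp base}(2). So $[E_i,N]$ is finitely presented by \cref{fct:lfp base}(1), hence noetherian.

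Therefore $[M,N]$ is a subobject of the noetherian object $[E_0,N]$, and so it is noetherian. This is exactly where local noetherianity is essential: in the locally coherent case one would instead use that kernels of maps between coherent objects are coherent. The only point needing care is the left exactness of $[-,N]$ on the presentation, which follows from the adjunction as described. Everything else is routine.
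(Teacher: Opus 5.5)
Your proposal is correct and follows essentially the same argument as the paper. You restrict $\otimes$ and $I$ using (C1) and \cref{fct:lfp base}(1). You then take a dualizable presentation $E_1\to E_0\to M\to 0$ from \cref{fct:lfp base}(3), which exhibits $[M,N]$ as the kernel of a map between the finitely presented objects $[E_i,N]\iso E_i^{\vee}\otimes N$. Your observation that $[M,N]$ is a subobject of the noetherian object $[E_0,N]$ is the reason, left implicit in the paper, why that kernel is again finitely presented.
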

\begin{proof}
By (C1) and \cref{fct:lfp base} (1), the monoidal structure of $\catA$ restricts to $\catA_{\noeth}$.
It is enough to show that for any finitely presented objects $M$ and $N$,
the internal Hom $[M,N]$ is also finitely presented.
By \cref{fct:lfp base} (3), there is an exact sequence $E_1 \to E_0 \to M \to 0$
such that $E_0$ and $E_1$ are dualizable.
Then we have an exact sequence $0 \to [M,N] \to [E_0,N] \to [E_1,N]$.
Since $[E_i,N]\iso E_i^{\vee} \otimes N$ are finitely presented by \cref{fct:lfp base} (1) and (2),
their kernel $[M,N]$ is also finitely presented.
\end{proof}

\begin{rmk}\label{rmk:size of dualizable}
In the proof of \cref{fct:lfp base} (2) (\cite[Proposition 6.9]{HO}), we only use the condition (C1).
Thus, if $\catA$ is a locally noetherian Grothendieck cosmos satisfying (C1),
the subcategory of dualizable objects is essentially small since $\catA_{\noeth}$ is essentially small.
Thus, the following hold:
\begin{itemize}
\item
For a locally noetherian Grothendieck cosmos $\catA$ satisfying (C1),
it satisfies (C2) if and only if $\catA_{\noeth}$ has enough dualizable objects,
that is, for every object $X \in \catA_{\noeth}$, there exists a surjection $D\surj X$ from a dualizable object.
\item
The Picard group $\Pic \catA$ is isomorphic to a small group.
\end{itemize}
\end{rmk}

Next, let us review localization of locally noetherian Grothendieck cosmoi.
The following summarizes \cite[Appendix A]{BKS} in the locally noetherian case.
\begin{fct}[{\cite[Appendix A]{BKS}}]\label{fct:BKS}
Let $\catA$ be a locally noetherian Grothendieck cosmos and $\catX$ its tensor localizing subcategory.
Let $Q\colon \catA \to \catA/\catX$ denote the quotient functor,
and $R\colon \catA/\catX \to \catA$ its right adjoint.
Suppose that $\catA$ has enough flat objects.
\begin{enua}
\item
The Serre quotient $\catA/\catX$ is also a locally noetherian Grothendieck cosmos
such that the quotient functor $Q\colon \catA \to \catA/\catX$ is strict monoidal:
$Q(M\otimes_{\catA} N)=Q(M)\otimes_{\catA/\catX} Q(N)$.
\item
The quotient functor $Q$ preserves flat objects.
\item
There is a natural isomorphism of bifunctors:
\[
\catA[-,R-]\iso R((\catA/\catX)[Q-.-]).
\]
In particular, the right adjoint functor $R$ preserves the internal Hom.
\end{enua}
\end{fct}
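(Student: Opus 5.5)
The plan is to build the monoidal structure on $\catA/\catX$ by descending $\otimes_{\catA}$ along $Q$ with the universal property of the Serre quotient, then get everything else from adjunctions. The only delicate point is that $-\otimes N$ must send morphisms inverted by $Q$ to morphisms inverted by $Q$. This is the step I expect to be the main obstacle, and it is where the hypothesis of enough flat objects enters.

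\emph{Step 1 (descending the tensor product).} A morphism $f$ of $\catA$ becomes invertible under $Q$ iff $\Ker f,\Cok f\in\catX$. Factor such an $f$ as a surjection $M\surj \Ima f$ followed by an injection $\Ima f\inj M'$. Fix $N\in\catA$.
\begin{itemize}
\item Right exactness of $-\otimes N$ gives $\Cok(f\otimes N)\iso \Cok f\otimes N$, which lies in $\catX$ because $\catX$ is a tensor ideal.
\item For the surjective part, the kernel of $M\otimes N\surj \Ima f\otimes N$ is a quotient of $\Ker f\otimes N\in\catX$, hence lies in $\catX$.
\item For the injective part, put $C=\Cok f$. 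The kernel of $\Ima f\otimes N\to M'\otimes N$ is a quotient of $\mathrm{Tor}_1(C,N)$. To control it, choose a flat resolution $\cdots\to F_1\to F_0\to N\to 0$, using that there are enough flat objects. This computes $\mathrm{Tor}_1(C,N)$ as a subquotient of $C\otimes F_1\in\catX$. So the kernel lies in $\catX$, since $\catX$ is Serre.
\end{itemize}
Hence $Q\circ(-\otimes-)$ inverts $Q$-isomorphisms in each variable separately, and by symmetry in both. The universal property of $Q$, applied in each variable, yields a bifunctor $\otimes_{\catA/\catX}$ with $Q(M)\otimes Q(N)=Q(M\otimes N)$ and unit $Q(I)$. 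The associativity, unit and symmetry constraints descend as well, because every object of $\catA/\catX$ is of the form $Q(M)$ (via $QR\iso\id$).

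\emph{Step 2 (closedness, noetherianity, flatness).} $\catA/\catX$ is a locally noetherian Grothendieck category by the preliminaries on localizing subcategories. Closedness will follow from \cref{prp:criterion for Groth cosmos}. It suffices that $-\otimes Q(M)$ preserves small colimits. Any diagram $D$ in $\catA/\catX$ is isomorphic to $QRD$, and $Q$ preserves colimits, so
\[
(\dlim D)\otimes Q(M)\iso Q\bigl((\dlim RD)\otimes M\bigr)\iso \dlim\, Q(RD\otimes M),
\]
which is $\dlim (D\otimes QM)$. For (2), let $F$ be flat and let $u$ be a monomorphism in $\catA/\catX$. Then $u\iso Q(Ru)$ with $Ru$ a monomorphism, because $R$ is a right adjoint. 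Hence $u\otimes Q(F)\iso Q(Ru\otimes F)$ is a monomorphism, as $Q$ is exact.

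\emph{Step 3 (internal Hom).} For (3) I will argue by Yoneda. For $M,N\in\catA$ and $Y\in\catA/\catX$ we have natural isomorphisms
\[
\catA(M,\catA[N,RY])\iso\catA(M\otimes N,RY)\iso(\catA/\catX)(QM\otimes QN,Y)\iso(\catA/\catX)(QM,[QN,Y])\iso\catA(M,R[QN,Y]).
\]
These are natural in $M$, so $\catA[N,RY]\iso R((\catA/\catX)[QN,Y])$, naturally in $N$ and $Y$. Taking $N=RY'$ and using $QR\iso\id$ shows that $R$ preserves internal Homs.
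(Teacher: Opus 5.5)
Your proof is correct. The paper gives no proof of its own here and simply cites \cite[Appendix A]{BKS}; your approach is the standard argument for this fact. The one non-formal step is showing that $-\otimes N$ preserves morphisms with kernel and cokernel in $\catX$, which you handle correctly with a flat resolution of $N$ (the relevant kernel is a subquotient of $\Cok f\otimes F_1\in\catX$). Everything else follows formally from $Q\dashv R$ and $QR\iso\id$: descent of $\otimes$, closedness, preservation of flat objects, and the formula $\catA[N,RY]\iso R((\catA/\catX)[QN,Y])$.
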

Let $\catA$ be a locally noetherian Grothendieck cosmos satisfying (C1) and (C2),
and let $\catX$ be a tensor localizing subcategory of $\catA$.
Since $\catA$ has enough flat objects by (C2),
we can apply \cref{fct:BKS} to $\catA$.
Then $\catA/\catX$ is also a locally noetherian Grothendieck cosmos satisfying (C1) and (C2).

\section{Atom spectra with group actions}\label{s:Atom with G-act}
Throughout this section, let $\catA$ be an abelian category,
and suppose that a group $G$ acts on $\catA$,
that is, there is a group homomorphism from $G$ to the group of the isomorphism classes of autoequivalences of $\catA$.
The symbol $g\cdot X$ denotes the action of $g\in G$ on $X \in \catA$.
Note that the notion of action in this paper is different from that in \cite[Section 2.7]{EGNO}.

In this section, we introduce the \emph{$G$-orbit atom spectrum} $\Spec^G \catA$, the quotient space of the atom spectrum $\Spec \catA$ by $G$,
and investigate its basic properties and the localization of $\catA$ at a point of $\Spec^G \catA$ for later use.
When $G$ is trivial, the resulting theory agrees with the usual theory of the atom spectrum.
However, some of the results seem to be new even in this case.

Since any autoequivalence preserves monoform objects and atom-equivalence,
the group $G$ naturally acts on the atom spectrum $\Spec \catA$.
For $\alpha = \ol{H}\in \Spec \catA$ and $g \in G$,
we write $g\cdot \alpha := \ol{g\cdot H}$.
It is easy to see that $g \cdot \Supp(M)= \Supp(g\cdot M)$ and $g\cdot \Ass(M)= \Ass(g\cdot M)$
for any $M\in \catA$ and $g\in G$.
We denote by $\Spec^G \catA$ the quotient space of $\Spec \catA$ by $G$,
and call it the \emph{$G$-orbit atom spectrum} of $\catA$.
Let $\pi_G\colon \Spec \catA \to \Spec^G \catA$ be the quotient map.
Note that $\pi_G$ is an open surjective continuous map.
For $\alpha\in \Spec \catA$, we write $\wti{\alpha}:= \pi_G(\alpha) \in \Spec^G \catA$.
For a monoform object $H$, we also write $\wti{H}:=\pi_G(\ol{H})\in \Spec^G \catA$.
For an object $M$ of $\catA$, we write
\[
\Supp^G M := \pi_G(\Supp M),\quad
\Ass^G M := \pi_G(\Ass M).
\]
Then $\{\Supp^G M \mid M\in\catA \}$ forms an open basis of $\Spec^G\catA$.
We have $\Ass^G M \subseteq \Supp^G M$ by definition.
If $M$ is nonzero noetherian object, then $\Ass^G M$ is a non-empty finite set.
It is easy to see that $\Supp^G(g\cdot M)=\Supp^G(M)$ and $\Ass^G(g\cdot M)=\Ass^G(M)$ for any $M\in \catA$ and $g\in G$.
For a subcategory $\catX$ of $\catA$,
we put $\Supp^G \catX:=\bigcup_{X\in \catX}\Supp^G X$ and $\Ass^G \catX := \bigcup_{X\in \catX} \Ass^G X$.
%

Let $0\to X \to Y \to Z \to 0$ be an exact sequence,
and let $\{M_i\}_{i\in I}$ be a family of objects in $\catA$ such that the coproduct $\bigoplus_{i\in I} M_i$ exists.
The following formulas are a direct consequence of \cite[Proposition 3.3, 3.5, and 5.6]{Kan-Serre}:
\begin{equation}\label{eq:SuppG AssG formula}
\begin{aligned}
&\Supp^G Y =\Supp^G X \cup \Supp^G Z,\quad \Ass^G X \subseteq \Ass^G Y \subseteq \Ass^G X \cup \Ass^G Z,\\
&\Supp^G(\bigoplus_{i\in I} M_i)=\bigcup_{i\in I} \Supp^G M_i,\quad
\Ass^G(\bigoplus_{i\in I} M_i)=\bigcup_{i\in I} \Ass^G M_i
\end{aligned}
\end{equation}
The following lemma is useful to study $\Ass^G M$.
\begin{lem}\label{prp:ass decomp}
Let $M$ be a noetherian object.
Consider the decomposition $\Ass^G M = \Phi \sqcup \Psi$ as a set.
Then there is an exact sequence $0 \to L \to M \to N \to 0$
such that $\Ass^G L = \Phi$ and $\Ass^G N = \Psi$.
\end{lem}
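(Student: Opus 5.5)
We will obtain $L$ as a maximal subobject of $M$ whose $G$-orbit associated atoms all lie in $\Phi$, and then check that the quotient $N=M/L$ has exactly the remaining orbits $\Psi$.

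\emph{Construction of $L$.} Consider the set of subobjects $L'\subseteq M$ with $\Ass^G L'\subseteq \Phi$. It is nonempty, since it contains $0$. Because $M$ is noetherian, it has a maximal element $L$.

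\emph{The inclusion $\Ass^G N\subseteq\Psi$.} Suppose some $\wti{\alpha}\in\Ass^G N$ lies in $\Phi$. Choose a monoform subobject $H'\subseteq N$ with $\wti{H'}=\wti{\alpha}$, and let $L''\subseteq M$ be its preimage, so that $0\to L\to L''\to H'\to 0$ is exact. By \eqref{eq:SuppG AssG formula}, $\Ass^G L''\subseteq \Ass^G L\cup\Ass^G H'$. A monoform object has a single associated atom, namely its own class, so $\Ass^G H'=\{\wti{\alpha}\}\subseteq\Phi$. Hence $\Ass^G L''\subseteq\Phi$. Since $L''\supsetneq L$, this contradicts maximality. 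Therefore $\Ass^G N\cap\Phi=\emptyset$. Combined with $\Ass^G N\subseteq \Ass^G M\cup\dots$ this is not yet enough, so we will instead use the sequence $0\to L\to M\to N\to 0$ in the next step.

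\emph{Equalities.} The formula gives $\Ass^G L\subseteq\Ass^G M\subseteq\Ass^G L\cup\Ass^G N$.

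First, $\Ass^G N\subseteq \Psi$. The standard fact $\Ass N\subseteq \Ass M\cup\Supp L$ does not obviously help here. Instead we argue as follows. Let $\wti\beta\in\Ass^G N$ with $\wti\beta\notin\Ass^G M$. Choosing a monoform $H'\subseteq N$ as before gives an extension $L''$ of $H'$ by $L$ in which every associated orbit is either in $\Ass^G L\subseteq\Phi$ or equals $\wti\beta$. But $\Ass^G L''\subseteq\Ass^G M$, so $\wti\beta\notin\Ass^G L''$. Hence $\Ass^G L''\subseteq\Phi$ regardless, again contradicting maximality. So $\Ass^G N\subseteq \Ass^G M\setminus\Phi=\Psi$.

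Second, $\Ass^G L=\Phi$ and $\Ass^G N=\Psi$. Since $\Ass^G M\subseteq\Ass^G L\cup\Ass^G N$, with $\Ass^G L\subseteq\Phi$ and $\Ass^G N\subseteq\Psi$, and since $\Phi,\Psi$ are disjoint with union $\Ass^G M$, both inclusions must be equalities.

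The key step, and the one needing care, is the maximality argument. It uses only two facts: that a monoform object's associated atom set is the singleton of its own class, and the subobject monotonicity of $\Ass$.
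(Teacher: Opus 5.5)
Your proof is correct and follows the paper's argument: take $L$ maximal among subobjects with $\Ass^G L\subseteq\Phi$, pull a monoform subobject $H'\subseteq M/L$ back to $L''\subseteq M$, and play maximality against the two inclusions $\Ass^G L''\subseteq \Ass^G L\cup\{\wti{H'}\}$ and $\Ass^G L''\subseteq\Ass^G M$. Your two contradiction steps can be merged into one, as in the paper: maximality forces $\Ass^G L''\not\subseteq\Phi$, which gives both $\wti{H'}\in\Ass^G L''\subseteq\Ass^G M$ and $\wti{H'}\notin\Phi$; you should also delete the stray remarks about the argument being ``not yet enough'' and about the standard fact not helping.
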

\begin{proof}
Let $L$ be a maximal subobject of $M$ among those satisfying $\Ass^G L \subseteq \Phi$.
We prove that $\Ass^G L = \Phi$ and $\Ass^G(M/L)=\Psi$.
Take $x\in \Ass^G(M/L)$.
There is a monoform subobject $H$ of $M/L$ such that $\wti{H}=x$.
Pulling back the natural exact sequence $0 \to L \to M \to M/L \to 0$ via the inclusion morphism $H \inj M/L$, we obtain the following commutative diagram with exact rows:
\[
\begin{tikzcd}
0 \ar[r] & L \ar[d,equal] \ar[r] & L' \ar[r] \ar[d,hook] \pb & H \ar[r] \ar[d,hook] & 0\\
0 \ar[r] & L \ar[r] & M \ar[r] & M/L \ar[r] & 0.
\end{tikzcd}
\]
If $x \in \Phi$, then $\Ass^G L' \subseteq \Ass^GL \cup \{x\} \subseteq \Phi$,
which contradicts the maximality of $L$.
Thus, we have $\Ass^G(M/L) \subseteq \Psi$.
For the same reason, we obtain $\Ass^G L \subsetneq \Ass^G L'$.
Hence, we obtain $\Ass^G L' = \Ass^G L \cup \{x\}$.
This implies that $x\in \Ass^G L' \subseteq \Ass^G M$,
and we have $\Ass^G(M/L) \subseteq \Ass^G M$.
As $\Ass^G M \subseteq \Ass^G(L) \cup \Ass^G(M/L)$,
we can conclude that $\Ass^G L = \Phi$ and $\Ass^G(M/L)=\Psi$.
\end{proof}
%

A subcategory $\catX$ of $\catA$ is \emph{$G$-closed} if for any $g\in G$ and any $X\in \catX$,
we have $g\cdot X \in \catX$.
The set of $G$-closed localizing subcategories is denoted by $\Loc_{G}\catA$.
Similarly, we denote by $\tors_G\catA$ (resp.\ $\torf_G\catA$) the set of $G$-closed torsion classes (resp.\ $G$-closed torsion-free classes).
The subcategory $\catA_{\noeth}$ is $G$-closed,
and thus, $\catA_{\noeth}$ is also an abelian category with a $G$-action.
If a subset $\Phi$ of $\Spec \catA$ is $G$-stable, that is, $G \cdot \Phi \subseteq \Phi$, then $\Supp^{-1}(\Phi)$ and $\Ass^{-1}(\Phi)$ are $G$-closed.
For any subset $\Psi$ of $\Spec^G \catA$, define subcategories of $\catA$ by
\[
(\Supp^G)^{-1}(\Psi):=\{M\in \catA \mid \Supp^G M \subseteq \Psi\},\quad
(\Ass^G)^{-1}(\Psi) :=\{M\in \catA \mid \Ass^G M \subseteq \Psi\}.
\]
Then $(\Supp^G)^{-1}(\Psi)=\Supp^{-1}(\pi_G^{-1}(\Psi))$ and $(\Ass^G)^{-1}(\Psi)=\Ass^{-1}(\pi_G^{-1}(\Psi))$.
In particular, these are a $G$-closed Serre subcategory closed under coproducts and a $G$-closed torsion-free class closed under coproducts, respectively.

\begin{lem}\label{prp:classify G-closed localizing}
Let $\catA$ be a locally noetherian Grothendieck category with a $G$-action.
Then there exist bijections among the following sets:
\begin{enur}
\item 
the set of $G$-closed localizing subcategories of $\catA$.
\item 
the set of $G$-closed Serre subcategories of $\catA_{\noeth}$.
\item
the set of open subsets of $\Spec^G \catA$.
\end{enur}
The bijection between (i) and (iii) is given by $\catX \mapsto \Supp^G\catX$ and $\Phi \mapsto (\Supp^G)^{-1}(\Phi)$.
Here, $\catX$ is a $G$-closed localizing subcategory of $\catA$,
$\Supp^G\catX:=\bigcup_{X\in\catX} \Supp^G X$, and $\Phi$ is an open subset of $\Spec^G \catA$.
\end{lem}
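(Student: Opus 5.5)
The plan is to restrict Kanda's classification (\cref{fct:Kanda classify Serre}) to $G$-closed subcategories, showing that each of its bijections matches the $G$-closed objects on one side with the $G$-closed objects on the other. In $\Spec \catA$, the $G$-closed objects are the $G$-stable open subsets. Since $\pi_G$ is an open continuous surjection, the assignments $U \mapsto \pi_G(U)$ and $\Phi \mapsto \pi_G^{-1}(\Phi)$ are mutually inverse bijections between $G$-stable open subsets of $\Spec\catA$ and open subsets of $\Spec^G\catA$. Indeed, $\pi_G^{-1}\pi_G(U)=G\cdot U=U$ for $G$-stable $U$, and $\pi_G\pi_G^{-1}(\Phi)=\Phi$ by surjectivity. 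Openness of $\pi_G(U)$ follows since $\pi_G$ is open, and openness of $\pi_G^{-1}(\Phi)$ by continuity.

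Next I will check $G$-compatibility of Kanda's bijections. For a localizing (or Serre) subcategory $\catX$, we have $\Supp(g\cdot X)=g\cdot\Supp X$, so $G$-closedness of $\catX$ implies that $\Supp\catX$ is $G$-stable. Conversely, if $\Phi$ is $G$-stable, then $\Supp^{-1}(\Phi)$ is $G$-closed, as already noted in the text. Because these maps are mutually inverse bijections, $\catX$ is $G$-closed if and only if $\Supp\catX$ is $G$-stable; for the nontrivial direction, use $\catX=\Supp^{-1}(\Supp\catX)$. Thus (i) and (ii) each correspond bijectively to $G$-stable open subsets of $\Spec\catA$.

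For the direct correspondence between (i) and (ii), I will verify that the maps $\catX\mapsto\catX\cap\catA_{\noeth}$ and $\catS\mapsto\la\catS\ra_{\mathrm{loc}}$ preserve $G$-closedness. The first is clear, since $\catA_{\noeth}$ is $G$-closed. For the second, $g\cdot\la\catS\ra_{\mathrm{loc}}$ is the smallest localizing subcategory containing $g\cdot\catS=\catS$, because an autoequivalence preserves the localizing property. Alternatively, this also follows from the support description, since the bijections commute with $\Supp$.

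Finally, I will compose the bijections to obtain the explicit formulas. Under the composite, $\catX\mapsto\pi_G(\Supp\catX)=\bigcup_X\pi_G(\Supp X)=\Supp^G\catX$. In the other direction, $\Phi\mapsto\Supp^{-1}(\pi_G^{-1}(\Phi))=(\Supp^G)^{-1}(\Phi)$, using the identity stated before the lemma. I expect no serious obstacle. The only point needing care is the claim that a $G$-stable set $U$ is open if and only if $\pi_G(U)$ is open, which relies on $\pi_G$ being both open and a quotient map.
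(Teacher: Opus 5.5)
Your proposal is correct and follows essentially the same route as the paper. The paper's proof also reduces to Kanda's classification (\cref{fct:Kanda classify Serre}) and checks the same points: $\catX\mapsto\catX\cap\catA_{\noeth}$ and $\catS\mapsto\la\catS\ra_{\mathrm{loc}}$ preserve $G$-closedness, $\Supp\catX$ is $G$-stable, $\Supp^{-1}(\Phi)$ is $G$-closed for $G$-stable $\Phi$, and $U\mapsto\pi_G^{-1}(U)$ is a bijection onto the $G$-stable open subsets of $\Spec\catA$. The paper calls these "straightforward," and you have simply written out the details.
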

\begin{proof}
By \cref{fct:Kanda classify Serre}, it is enough to show the following:
\begin{itemize}
\item
For any $G$-closed Serre subcategory $\catX$ of $\catA_{\noeth}$,
the smallest localizing subcategory $\catA$ containing $\catX$ is also $G$-closed.
\item
For any $G$-closed localizing subcategory $\catX$ of $\catA$,
the subcategory $\catX \cap \catA_{\noeth}$ is also $G$-closed.
\item
If $\catX$ is a $G$-closed localizing subcategory of $\catA$,
then $\Supp \catX$ is $G$-stable.
\item
If $\Phi$ is a $G$-stable open subset of $\Spec \catA$,
then $\Supp^{-1}(\Phi)$ is $G$-closed.
\item
The assignment $U\mapsto \pi_G^{-1}(U)$ yields a bijection between
the set of open subsets of $\Spec^{G}\catA$ and the set of $G$-stable open subsets of $\Spec \catA$.\end{itemize}
These are straightforward.
\end{proof}

A torsion pair $(\catT,\catF)$ is \emph{$G$-closed} if $\catT$ is $G$-closed.
Equivalently, $\catF$ is $G$-closed.
The following is a direct corollary of \cref{prp:hered tors by atom}.
\begin{cor}\label{prp:tensor torsion pair associated to subset}
Let $\catA$ be a locally noetherian Grothendieck category with a $G$-action.
For any open subset $\Phi$ of $\Spec^G \catA$,
the pair $((\Supp^G)^{-1}(\Phi), (\Ass^G)^{-1}(\Spec^G \catA \setminus \Phi))$ is a hereditary G-closed torsion pair. \qed
\end{cor}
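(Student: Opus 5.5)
The plan is to obtain the statement as the $G$-equivariant refinement of \cref{prp:hered tors by atom}, applied to the $G$-stable open subset $\pi_G^{-1}(\Phi)$ of $\Spec\catA$.

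First, since $\pi_G$ is continuous, $\pi_G^{-1}(\Phi)$ is open in $\Spec \catA$. It is also $G$-stable, because it is the preimage of a subset of the orbit space. Next, I rewrite both subcategories in terms of non-orbit data, using the identities recorded just before \cref{prp:classify G-closed localizing}:
\[
(\Supp^G)^{-1}(\Phi)=\Supp^{-1}(\pi_G^{-1}(\Phi)),\quad
(\Ass^G)^{-1}(\Spec^G\catA\setminus\Phi)=\Ass^{-1}(\pi_G^{-1}(\Spec^G\catA\setminus\Phi)).
\]
Because preimages commute with complements, $\pi_G^{-1}(\Spec^G\catA\setminus\Phi)=\Spec\catA\setminus\pi_G^{-1}(\Phi)$. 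The pair in the statement is therefore exactly $(\catX_{\Psi},\catY_{\Psi})$ with $\Psi:=\pi_G^{-1}(\Phi)$.

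Applying \cref{prp:hered tors by atom} to the open set $\Psi$, I conclude that this is a hereditary torsion pair. For $G$-closedness, it suffices that the torsion class $\Supp^{-1}(\Psi)$ is $G$-closed. This holds since $\Psi$ is $G$-stable: if $\Supp X\subseteq\Psi$, then $\Supp(g\cdot X)=g\cdot\Supp X\subseteq\Psi$. Alternatively, one can note directly that $\Supp^G(g\cdot X)=\Supp^G X$.

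None of these steps is a real obstacle. The only point requiring care is the complement identity for preimages under the quotient map, which lets the statement reduce verbatim to \cref{prp:hered tors by atom}.
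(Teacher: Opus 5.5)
Your proposal is correct and is essentially the paper's own argument. The corollary is recorded as a direct consequence of \cref{prp:hered tors by atom}: apply it to the $G$-stable open set $\pi_G^{-1}(\Phi)$, rewrite the pair via $(\Supp^G)^{-1}(\Psi)=\Supp^{-1}(\pi_G^{-1}(\Psi))$ and $(\Ass^G)^{-1}(\Psi)=\Ass^{-1}(\pi_G^{-1}(\Psi))$, and get $G$-closedness from the $G$-stability of $\pi_G^{-1}(\Phi)$, exactly as you do.
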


If $\catX$ is a $G$-closed Serre subcategory of $\catA$,
then $G$ naturally acts on the Serre quotient $\catA/\catX$.
Moreover, if $\catX$ is a $G$-closed localizing subcategory of $\catA$,
there is a right adjoint $R\colon \catA/\catX \to \catA$ of the quotient functor $Q\colon \catA \to \catA /\catX$.
We see that $R(g\cdot N) \iso g \cdot R(N)$ for any $g\in G$ and $N \in \catA/\catX$ from the following calculation:
\[
\catA(-,R(g\cdot N))
\iso (\catA/\catX)(Q(-),g\cdot N)
\iso (\catA/\catX)(Q(g^{-1}\cdot -), N)
\iso \catA(-,g\cdot R(N)).
\]

\begin{lem}\label{prp:Spec^G A/X}
Let $\catA$ be a Grothendieck category with a $G$-action,
and let $\catX$ be a $G$-closed localizing subcategory of $\catA$.
The quotient functor $\catA \to \catA/\catX$ is denoted by $\pi$
and its right adjoint is denoted by $R$.
\begin{enua}
\item
$\Spec^G(\catA/\catX)$ is homeomorphic to $\Spec^G(\catA) \setminus \Supp^G(\catX)$.
\item
For every $M \in \catA$, we have
\[
\Ass^G_{(\catA/\catX)} Q(M) \supseteq \Ass^G_\catA M \setminus \Supp^G_{\catA} \catX,\quad
\Supp^G_{(\catA/\catX)} Q(M) = \Supp^G_{\catA} M \setminus \Supp^G_{\catA} \catX.
\]
\item
For every $N \in \catA/\catX$, we have
\[
\Ass^G_{(\catA/\catX)} N = \Ass^G_{\catA} R(N),\quad
\Supp^G_{(\catA/\catX)} N = \Supp^G_{\catA} R(N) \setminus \Supp^G_{\catA} \catX.
\]
\end{enua}
\end{lem}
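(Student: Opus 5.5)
The plan is to deduce everything from \cref{fct:Spec of quot} by passing to $G$-orbits. That fact provides a homeomorphism $\theta\colon \Spec(\catA/\catX) \isoto \Spec\catA \setminus \Supp_\catA \catX$, given by $\ol{H'} \mapsto \ol{R(H')}$, with inverse $\ol{H}\mapsto \ol{Q(H)}$. The first step is to check that $\theta$ is $G$-equivariant. Since $R(g\cdot N)\iso g\cdot R(N)$ (shown just before the lemma), we get $\theta(g\cdot \ol{H'}) = \ol{R(g\cdot H')} = \ol{g\cdot R(H')} = g\cdot\theta(\ol{H'})$.

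Next we note that $\Supp_\catA\catX$ is $G$-stable, because $\catX$ is $G$-closed and $g\cdot\Supp X=\Supp(g\cdot X)$. Hence its complement $U:=\Spec\catA\setminus\Supp_\catA\catX$ is also $G$-stable. Being a union of $G$-orbits, $U$ satisfies $\pi_G(U)=\Spec^G\catA\setminus\Supp^G\catX$.

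For (1), an equivariant homeomorphism descends to a homeomorphism of quotient spaces, giving $\Spec^G(\catA/\catX)\iso U/G$. The remaining point is that the quotient topology on $U/G$ agrees with the subspace topology on $\pi_G(U)\subseteq\Spec^G\catA$. This holds because $\pi_G$ is open and $U$ is open and saturated: the restriction $\pi_G|_U\colon U\to\pi_G(U)$ is then a continuous open surjection, and therefore a quotient map. I expect this topological check to be the only point requiring any care, and it is routine.

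For (2) and (3), we apply $\pi_G$ to the formulas in \cref{fct:Spec of quot}, using the identification just built, which is compatible with $\pi_G$ by equivariance. Equalities are preserved because $\pi_G$ commutes with images of unions. For the set differences, $\pi_G(A\setminus B)=\pi_G(A)\setminus\pi_G(B)$ holds whenever $B$ is $G$-stable, and here $B=\Supp_\catA\catX$ is $G$-stable. The inclusion for $\Ass$ in (2) is preserved because images preserve inclusions. This gives all the stated formulas.
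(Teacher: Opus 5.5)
Your proposal follows the paper's proof. Both transport the homeomorphism and the formulas of \cref{fct:Spec of quot} along the $G$-equivariant correspondence, using that $\Supp_\catA\catX$ is $G$-stable; the paper phrases this as $\Supp\catX=\pi_G^{-1}(\Supp^G\catX)$. Your explicit comparison of the quotient topology on $U/G$ with the subspace topology on $\pi_G(U)$ spells out a point the paper leaves implicit.

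There is one slip in that step. The set $U=\Spec\catA\setminus\Supp_\catA\catX$ is closed, not open, because $\Supp_\catA\catX$ is a union of basic open sets $\Supp X$. This matches the paper's later remark that $\Spec^G(\catA_x)\iso\ol{\{x\}}$. The conclusion still holds, since saturation of $U$ alone suffices: for $V$ open in $\Spec\catA$ one has $\pi_G(V\cap U)=\pi_G(V)\cap\pi_G(U)$. Hence $\pi_G|_U$ is an open continuous surjection onto $\pi_G(U)$, and therefore a quotient map.
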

\begin{proof}
(1):
The assignments $\ol{H} \mapsto \ol{Q(H)}$ and $\ol{H'} \mapsto \ol{R(H')}$ 
yield a homeomorphism between $\Spec (\catA/\catX)$ and $\Spec \catA \setminus \Supp \catX$ by \cref{fct:Spec of quot}.
As $Q(g\cdot M) = g \cdot Q(M)$ for any $M\in \catA$ and $g\in G$,
the assignment $\ol{H} \mapsto \ol{Q(H)}$ is $G$-equivariant.
Thus, taking the quotient of $\Spec (\catA/\catX)$ and $\Spec \catA \setminus \Supp \catX$,
we obtain a desired homeomorphism.

(2) and (3):
As $\catX$ is $G$-closed, we have $\Supp\catX=\pi^{-1}(\Supp^G\catX)$.
Using this observation, the assertion follows from \cref{fct:Spec of quot}.
\end{proof}

Next, we discuss the localization at a point of $\Spec^G \catA$.
In what follows, assume that $\catA$ is a locally noetherian Grothendieck category with a $G$-action.
For a point $x \in \Spec^G \catA$, put $U_{x}:=\Spec^G \catA \setminus \ol{\{x\}}$, where $\ol{\{x\}}$ is the closure of $\{x\}$ in $\Spec^G \catA$.
We write the corresponding G-closed localizing subcategory of $\catA$ by $\catX_{x}:=(\Supp^G)^{-1}(U_{x})$.
If $x=\wti{\alpha}$ for some $\alpha \in \Spec \catA$, we have
\[
\catX_{x}
=\Supp^{-1}(\pi_G^{-1}(U_{x}))
=\Supp^{-1}(\Spec\catA \setminus \ol{\pi_G^{-1}(x)})
=\Supp^{-1}(\Spec \catA \setminus \ol{G\cdot \alpha}).
\]
In the second equality, we use the fact that the quotient map $\pi_G \colon \Spec \catA \to \Spec^G \catA$ is an open surjective continuous map.
For $M \in \catA$, we note that
\begin{equation}\label{eq:catX_x}
M \in \catX_{x} 
\Equi \Supp^G M \subseteq U_{x}
\Equi \ol{\{x\}} \subseteq \Spec^G \catA \setminus \Supp^G M
\Equi x \not\in \Supp^G M.
\end{equation}
We define the \emph{localization of $\catA$ at $x$}
by the Serre quotient $\catA_{x}:=\catA/\catX_{x}$.
The quotient functor $\catA \to \catA_{x}$ is denoted by $(-)_{x}$.
Then $\catA_{x}$ is also a locally noetherian Grothendieck category with the natural $G$-action.
From \eqref{eq:catX_x}, we have
\[
\Supp^G M = \{x \in \Spec^G\catA \mid M_{x} \ne 0 \}.
\]
By \cref{prp:Spec^G A/X},
$\Spec^G(\catA_{x})$ is homeomorphic to $\ol{\{x\}}$.
This means that $\catA_{x}$ is $G$-local in the following sense.

\begin{dfn}
Let $\catA$ be a locally noetherian Grothendieck category with a $G$-action.
It is said to be \emph{$G$-local}
if $\Spec^G \catA$ has a generic point,
that is, there is $x \in \Spec^G\catA$ such that $\Spec^G\catA = \ol{\{x\}}$.
\end{dfn}
We now discuss some basic properties of $G$-local Grothendieck categories.
\begin{lem}\label{prp:G-local}
Let $\catA$ be a locally noetherian Grothendieck category with a $G$-action.
Suppose that $\catA$ is $G$-local, and let $x$ be a generic point of $\Spec^G\catA$.
\begin{enua}
\item
A generic point of $\Spec^G \catA$ is unique, and
we have $\wti{S}=x$ for any simple object $S$.
\item
For any $M \in \catA$ such that $x \in \Ass^G M$,
there is an injection $S \inj M$ from a simple object.
\end{enua}
\end{lem}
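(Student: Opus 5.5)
For (1), I will first show that the closure of the orbit of a simple object is as small as possible. Let $S$ be a simple object. It exists by the lemma stating that every nonzero locally noetherian Grothendieck category has one, and we may assume $\catA\neq 0$ since otherwise there is nothing to prove. A simple object is monoform, and $\Supp S=\{\ol{S}\}$, because its only nonzero subquotient is $S$ itself. Hence $\{\ol{S}\}$ is open in $\Spec\catA$, so $\{\wti{S}\}=\Supp^G S$ is open in $\Spec^G\catA$. Since $x$ is a generic point, every nonempty open subset of $\Spec^G\catA$ contains $x$, which gives $x\in\{\wti S\}$, i.e.\ $\wti S=x$. Uniqueness of the generic point goes the same way. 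If $y$ is another generic point, the nonempty open set $\{\wti S\}$ contains both $x$ and $y$, so $x=\wti S=y$. Alternatively, $x\in\ol{\{y\}}$ and $y\in\ol{\{x\}}$ together with the open point $x$ force $y=x$.

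For (2), suppose $x\in\Ass^G M$. Then there is a monoform subobject $H\subseteq M$ with $\wti H=x$, and I will replace $H$ by a nonzero noetherian subobject; this is still monoform and atom-equivalent to $H$. The natural tool is \cref{prp:monoform sub lem} applied to the open set $\Phi=\pi_G^{-1}(\{x\})=G\cdot\ol{S}$. This set is open as a union of the open points $\{g\cdot\ol{S}\}$. It yields a monoform subobject $H'\subseteq M$ with $\ol{H'}\in\Phi$ and $\Supp H'\subseteq\Phi$. After shrinking $H'$ to a nonzero noetherian subobject, $H'$ has a simple quotient $T$ by the maximal-subobject remark. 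Monoformness of $H'$ is then used to conclude that $H'$ itself is simple: if the kernel $K$ of $H'\surj T$ were nonzero, then $K$ would have nonzero support inside $\Phi$ and $T$ would be a subobject of $H'/K$. The intended argument is that $\Supp H'\subseteq\Phi$, where each point of $\Phi$ is the class of a simple object, forces every nonzero subobject of $H'$ to contain a simple subobject. We then compare $K$ and $H'/K$.

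A cleaner route, which I would try first, uses Kanda's fact that an atom $\alpha$ is open in a locally noetherian category exactly when it is represented by a simple object. Concretely, let $\alpha=g\cdot\ol S=\ol{g\cdot S}$ with $\alpha\in\Ass M$. Then $M$ has a monoform subobject $H$ sharing a nonzero subobject $H_0$ with the simple object $g\cdot S$. Since $g\cdot S$ is simple, $H_0\iso g\cdot S$, so $g\cdot S\inj M$ is an injection from a simple object. This is direct and avoids the support argument: the only point to check is that $x\in\Ass^G M$ yields some $\alpha\in\Ass M\cap\pi_G^{-1}(x)$, which is the definition of $\Ass^G$, and that every element of $\pi_G^{-1}(x)$ has the form $\ol{g\cdot S}$, which holds by part (1) and the definition of the orbit.

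The main obstacle is therefore part (1), namely the identification $\pi_G^{-1}(x)=G\cdot\ol{S}$. The step that needs care there is the openness of $\{\ol S\}$. It follows from $\Supp S=\{\ol S\}$, which in turn holds because any monoform object sharing a nonzero subobject with a subquotient of $S$ is equivalent to $S$.
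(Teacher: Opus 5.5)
Your proof is correct and is essentially the paper's. In (1) the paper likewise uses that $\Supp^G S=\{\wti S\}$ is open and therefore contains every generic point. Your ``cleaner route'' for (2) is exactly the paper's argument: a monoform $H\subseteq M$ with $\wti H=x=\wti S$ shares a nonzero subobject with some $g\cdot S$, and that subobject must be $g\cdot S$ itself.

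Discard the first route for (2), though, because its claim that $H'$ must be simple is false in general. For $\bbZ$-graded $\bbk[t]/(t^2)$-modules with $\deg t=1$ and the shift action, $\bbk[t]/(t^2)$ is monoform and not simple, although its support lies in a single orbit. All that route would need is the true weaker fact that $H'$ contains a simple subobject.
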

\begin{proof}
(1):
Let $S$ be a simple object.
Then $\Supp^G S$ contains the generic point $x$.
As the support of a simple object is a singleton,
so is $\Supp^G S$.
Thus, we have $\{\wti{S}\} = \Supp^G S = \{x\}$.
This proves $\wti{S}=x$.
Moreover, if $\Spec^G\catA = \ol{\{y\}}$ for some $y\in \Spec^G\catA$,
then $y \in \Supp^G S = \{x\}$.
Thus, a generic point is unique.

(2):
By the definition of $\Ass^G M$, there is a monoform subobject $H$ of $M$ such that $\wti{H}=x$.
Since $\catA$ is nonzero and locally noetherian, there is a simple object $S$.
As $\wti{H}=x=\wti{S}$ by (1), there is a nonzero common subobject of both $H$ and $g\cdot S$ for some $g\in G$.
This means that there is an injection $g\cdot S \inj H \inj M$ because $g\cdot S$ is simple.
\end{proof}

\begin{lem}\label{prp:char G-local}
For a nonzero locally noetherian Grothendieck category $\catA$ with a $G$-action,
the following are equivalent:
\begin{enur}
\item
$\catA$ is $G$-local.
\item
Any two simple objects are isomorphic up to $G$-action.
\end{enur}
\end{lem}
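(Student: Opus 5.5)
The direction (i)$\imply$(ii) is \cref{prp:G-local} (1). If $\catA$ is $G$-local with generic point $x$, then every simple object $S$ satisfies $\wti{S}=x$. So for two simple objects $S,S'$ we have $\pi_G(\ol{S})=\pi_G(\ol{S'})$. Hence $\ol{S'}=g\cdot\ol{S}=\ol{g\cdot S}$ for some $g\in G$. The objects $S'$ and $g\cdot S$ are both simple, so atom-equivalence means they share a nonzero common subobject. That subobject must equal both of them, which gives $S'\iso g\cdot S$.

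For (ii)$\imply$(i), the plan is to show that $x:=\wti{S}$, for a fixed simple object $S$, is a generic point of $\Spec^G\catA$. A simple object exists because $\catA$ is nonzero and locally noetherian. We must show every nonempty open subset of $\Spec^G\catA$ contains $x$. It suffices to check this on basic opens $\Supp^G M$ with $M\neq 0$. Suppose $\Supp^G M\neq\emptyset$. Since $\catA$ is locally noetherian, $M$ has a nonzero noetherian subobject $M'$, and then $\Supp^G M'\subseteq\Supp^G M$. So we may assume $M$ is nonzero and noetherian, in particular finitely generated.

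By \cref{prp:Nakayama}, there is a simple object $S'$ with a nonzero morphism $M\to S'$. Such a morphism is surjective, so $S'$ is a quotient of $M$. Hence $\ol{S'}\in\Supp M$, since a simple object is monoform and is a subquotient of $M$. By (ii), $S'\iso g\cdot S$ for some $g\in G$. Therefore
\[
x=\wti{S}=\wti{S'}\in\pi_G(\Supp M)=\Supp^G M.
\]
Thus $x$ lies in every nonempty open subset, i.e.\ $\ol{\{x\}}=\Spec^G\catA$.

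The only point needing care is the reduction to finitely generated $M$. \cref{prp:Nakayama} requires finite generation, and the shrinking step relies on $\Supp$ being monotone on subobjects, which follows from \eqref{eq:SuppG AssG formula}. Everything else is formal.
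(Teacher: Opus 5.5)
Your proof is correct and takes essentially the same route as the paper. For (i)$\imply$(ii) both arguments use \cref{prp:G-local}~(1) together with simplicity, and for (ii)$\imply$(i) both show that $\wti{S}$ lies in every basic open $\Supp^G M$ with $M\neq 0$ because $M$ has a simple subquotient; the only difference is that you construct this subquotient explicitly (a noetherian subobject plus \cref{prp:Nakayama}), whereas the paper takes its existence directly from local noetherianity.
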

\begin{proof}
(i)$\imply$(ii):
Take two simple objects $S_1$ and $S_2$.
We have $\wti{S_1}=\wti{S_2}$ by \cref{prp:G-local} (1).
Then there exists some $g\in G$ such that $S_1$ and $g\cdot S_2$ have a common nonzero subobject.
Since $S_1$ and $S_2$ are simple, $S_1$ and $g\cdot S_2$ are isomorphic.

(ii)$\imply$(i):
Put $x:=\wti{S}$ for some simple object $S$.
Then $x=\wti{T}$ for all simple object $T$ by (ii).
Any nonzero object $M\in \catA$ has a simple subquotient since $\catA$ is locally noetherian.
Thus, we see that $x\in \Supp^G M$ for any nonzero $M\in \catA$.
As the subsets of the form $\Supp^G M$ are an open basis of $\Spec^G \catA$,
the point $x$ is a generic point of $\Spec^G \catA$.
\end{proof}

\begin{lem}\label{prp:length via Supp^G}
Let $\catA$ be a locally noetherian Grothendieck category with a $G$-action.
Suppose that $\catA$ is $G$-local and let $x$ be the generic point of $\Spec^G\catA$.
Then a nonzero noetherian object $M$ is of finite length if and only if $\Supp^G M = \{x\}$.
\end{lem}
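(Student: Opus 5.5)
We prove the two implications separately, using \cref{prp:G-local} and the formulas \eqref{eq:SuppG AssG formula}.

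\emph{Finite length implies $\Supp^G M=\{x\}$.} Suppose $M$ is nonzero of finite length, with a composition series whose factors $S_1,\dots,S_n$ are simple. Iterating \eqref{eq:SuppG AssG formula} along the series gives
\[
\Supp^G M=\bigcup_i \Supp^G S_i .
\]
As in the proof of \cref{prp:G-local} (1), each $\Supp^G S_i$ is the singleton $\{\wti{S_i}\}$, and $\wti{S_i}=x$. Hence $\Supp^G M=\{x\}$.

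\emph{$\Supp^G M=\{x\}$ implies finite length.} Suppose $M$ is nonzero noetherian with $\Supp^G M=\{x\}$. Every nonzero subquotient $N$ of $M$ is noetherian, so $\Ass^G N$ is nonempty. Moreover
\[
\Ass^G N\subseteq \Supp^G N\subseteq \Supp^G M=\{x\},
\]
since supports of subquotients are contained in $\Supp^G M$ by \eqref{eq:SuppG AssG formula}. Thus $x\in\Ass^G N$, and \cref{prp:G-local} (2) gives a simple subobject of $N$.

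We use this to build an ascending chain. Set $M_0=0$. Given $M_i\subsetneq M$, apply the previous paragraph to $N=M/M_i$ to get a simple subobject $S\subseteq M/M_i$, and let $M_{i+1}\supseteq M_i$ be its preimage in $M$. Then $M_{i+1}/M_i$ is simple. Since $M$ is noetherian, the strictly ascending chain $M_0\subsetneq M_1\subsetneq\cdots$ must stop, which can only happen when $M_n=M$. This is a finite composition series of $M$, so $M$ has finite length.

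The only real input is the existence of a simple subobject in each nonzero subquotient, which is \cref{prp:G-local} (2). The rest is bookkeeping with \eqref{eq:SuppG AssG formula}, so no step should be a genuine obstacle.
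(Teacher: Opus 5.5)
Your proof is correct and follows essentially the same route as the paper. You use a composition series together with \cref{prp:G-local}~(1) for the forward direction. For the converse you repeatedly extract simple subobjects of successive quotients via \cref{prp:G-local}~(2), and the process terminates because $M$ is noetherian.
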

\begin{proof}
The ``only if'' direction follows from \cref{prp:G-local} (1) and \cref{prp:char G-local} by using a composition series.
Suppose that $\Supp^G M = \{x\}$.
We have $\Ass^G M = \{x\}$, and there is an injection $S \inj M$ from a simple object by \cref{prp:G-local} (2).
If $M/S\ne0$, then $\Supp^G(M/S) = \{x\}$.
Therefore, iterating this procedure, we can conclude that $M$ is of finite length
since it is noetherian.
\end{proof}

It is known that the atom spectrum of an abelian category is a $T_0$-space,
that is, for any two distinct points, there is an open subset containing exactly one of them
(see \cite[Proposition 3.5]{Kan-spcl}).
We will show below that the orbit atom spectrum is also a $T_0$-space.

For a topological space $X$, define the \emph{specialization order} $\preceq$ on $X$ by
\[
x \preceq y \quad :\Longleftrightarrow \quad x\in \ol{\{y\}},
\]
where $\ol{\{y\}}$ denotes the topological closure of $\{y\}$.
Then $\preceq$ is always a preorder in general,
and it is a partial order if and only if $X$ is a $T_0$-space.
Let $Y$ be a subspace of $X$.
Then, for any subset $A$ of $Y$, we have $\ol{A}^X\cap Y = \ol{A}^{Y}$,
where $\ol{A}^X$ and $\ol{A}^Y$ are the closures of $A$ in $X$ and $Y$, respectively.
In particular, for any points $a, b\in Y$,
we have that $a \preceq b$ in $Y$ if and only if $a \preceq b$ in $X$.

\begin{prp}
Let $\catA$ be a locally noetherian Grothendieck category with a $G$-action.
Then $\Spec^G \catA$ is a $T_0$-space.
\end{prp}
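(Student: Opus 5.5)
We show that the specialization preorder on $\Spec^G\catA$ is antisymmetric. Take $x,y\in \Spec^G\catA$ with $x\preceq y$ and $y\preceq x$. Since $\{\Supp^G M \mid M\in\catA\}$ is an open basis, $x\in\ol{\{y\}}$ means: for every $M\in\catA$, $x\in\Supp^G M$ implies $y\in\Supp^G M$. So $x$ and $y$ lie in exactly the same basic open sets. Equivalently, by \eqref{eq:catX_x}, for every $M$ we have $M\in\catX_x \iff M\in\catX_y$, that is, $\catX_x=\catX_y$ and hence $U_x=U_y$ by \cref{prp:classify G-closed localizing}. Our aim is to deduce $x=y$.

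We pass to the localization $\catA_x=\catA/\catX_x$. By \cref{prp:Spec^G A/X} (1), $\Spec^G(\catA_x)$ is homeomorphic to $\Spec^G\catA\setminus\Supp^G\catX_x=\ol{\{x\}}$. Since $x\preceq y\preceq x$, this set equals $\ol{\{y\}}$ as well. The subspace topology preserves the specialization order, so both $x$ and $y$ are generic points of the subspace $\ol{\{x\}}$. Transporting through the homeomorphism, $\catA_x$ is $G$-local and has two generic points, namely the images of $x$ and $y$. By \cref{prp:G-local} (1) the generic point is unique, so the images coincide. Because the map is a homeomorphism, in particular injective, we conclude $x=y$.

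We need $\catA_x$ to be nonzero in order to apply \cref{prp:G-local}. This holds because $x=\wti{H}$ for some monoform $H$, and $x\in\Supp^G H$, so $H\notin\catX_x$.

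The main point to check is that the homeomorphism in \cref{prp:Spec^G A/X} (1), obtained from \cref{fct:Spec of quot}, is compatible with the identification of points. Concretely, the point $x\in \ol{\{x\}}$ must correspond to an actual point of $\Spec^G(\catA_x)$. This is immediate from the definition of the homeomorphism as a bijection onto the complement of $\Supp^G\catX_x$. Once that is granted, the rest is formal.

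An alternative route avoids the localization. Work directly in $\Spec\catA$, which is $T_0$ by \cite{Kan-spcl}, and compare the closures $\ol{G\cdot\alpha}$ and $\ol{G\cdot\beta}$. That comparison is messier, so we prefer the localization argument above.
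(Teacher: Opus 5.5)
Your proof is correct and is essentially the paper's argument: localize at $x$, identify $\Spec^G\catA_x$ with $\ol{\{x\}}=\ol{\{y\}}$ so that both $x$ and $y$ become generic points, and conclude $x=y$ from the uniqueness of the generic point in \cref{prp:G-local}~(1). Your check that $\catA_x\neq 0$ is harmless, since a generic point already makes $\Spec^G\catA_x$ nonempty, and the detour through $\catX_x=\catX_y$ is unnecessary because $U_x=U_y$ follows directly from $\ol{\{x\}}=\ol{\{y\}}$.
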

\begin{proof}
Take any points $x,y \in \Spec^G\catA$ such that $\ol{\{x\}}=\ol{\{y\}}$.
As $\Spec^G \catA_x = \ol{\{x\}}=\ol{\{y\}}$, both $x$ and $y$ are generic points.
Thus, we obtain $x=y$ by \cref{prp:G-local} (1).
This proves $\Spec^G \catA$ is a $T_0$-space.
\end{proof}
Although the above proposition is easy to prove, it is nevertheless of interest.
Indeed, the quotient of a $T_0$-space by a group action need not be a $T_0$-space in general, and the action of $G$ on the abelian category is arbitrary.
This indicates that the fact that autoequivalences preserve simple objects imposes a strong constraint on the induced action on the atom spectrum.	

For later use,
we record the behavior of $\Supp^G$ and $\Ass^G$ under localization at a point.
\begin{cor}\label{prp:Supp^G Ass^G at x}
Let $\catA$ be a locally noetherian Grothendieck category with a $G$-action,
and let $x$ be a point of $\Spec^G \catA$.
We denote by $R\colon \catA_x \to \catA$ a right adjoint of the quotient functor $(-)_x\colon \catA \to \catA_x$.
\begin{enua}
\item
For every $M \in \catA$, we have
\[
\Ass^G_{\catA_x} M_x \supseteq \Ass^G_\catA M \cap \ol{\{x\}},\quad
\Supp^G_{\catA_x} M_x = \Supp^G_{\catA} M \cap \ol{\{x\}}.
\]
\item
For every $N \in \catA_x$, we have
\[
\Ass^G_{\catA_x} N = \Ass^G_{\catA} R(N),\quad
\Supp^G_{\catA_x} N = \Supp^G_{\catA} R(N) \cap \ol{\{x\}}.
\]
\end{enua}
\end{cor}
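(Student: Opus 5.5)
This corollary is a direct specialization of \cref{prp:Spec^G A/X} to the $G$-closed localizing subcategory $\catX := \catX_x = (\Supp^G)^{-1}(U_x)$, where $U_x = \Spec^G\catA \setminus \ol{\{x\}}$. The plan is to compute $\Supp^G_\catA \catX_x$ explicitly and then substitute.

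The only step with content is to show
\[
\Supp^G \catX_x = U_x .
\]
Since $U_x$ is open, this follows from the bijection of \cref{prp:classify G-closed localizing}. Under that bijection, $\Phi \mapsto (\Supp^G)^{-1}(\Phi)$ is inverse to $\catX \mapsto \Supp^G\catX$ on open subsets of $\Spec^G\catA$. Hence $\Supp^G((\Supp^G)^{-1}(U_x)) = U_x$.

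For a direct check, the inclusion $\subseteq$ is immediate from the definition of $(\Supp^G)^{-1}$. For $\supseteq$, take $y \in U_x$. Since $U_x$ is open, $\pi_G^{-1}(U_x)$ is an open subset of $\Spec\catA$ containing some lift $\beta$ of $y$. By the definition of the topology (\cite[Definition 3.7]{Kan-Serre}), there is a monoform $H$ with $\ol{H} = \beta$ and $\Supp H \subseteq \pi_G^{-1}(U_x)$. Then $H \in \catX_x$ and $y \in \Supp^G H$.

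Consequently $\Spec^G\catA \setminus \Supp^G\catX_x = \ol{\{x\}}$. Substituting this into \cref{prp:Spec^G A/X} (2) and (3), with $Q = (-)_x$ and $\catA/\catX_x = \catA_x$, gives exactly the stated formulas. Each set-difference $\setminus \Supp^G_\catA\catX$ becomes $\cap\, \ol{\{x\}}$.

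Here $\Ass^G_{\catA_x}$ and $\Supp^G_{\catA_x}$ are regarded as subsets of $\Spec^G\catA$ via the homeomorphism $\Spec^G(\catA_x) \iso \ol{\{x\}}$ of \cref{prp:Spec^G A/X} (1). The lemma's hypothesis that $\catX$ be $G$-closed localizing is satisfied by \cref{prp:classify G-closed localizing}.

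None of these steps is a real obstacle. The one point needing care is the identification $\Supp^G\catX_x = U_x$, which depends on $U_x$ being open, and that holds by construction.
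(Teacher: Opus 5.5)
Your proposal is correct and matches the paper's proof, which simply states that the corollary follows from \cref{prp:Spec^G A/X} applied to the $G$-closed localizing subcategory $\catX_x$. Your explicit check that $\Supp^G\catX_x = U_x$, so that removing $\Supp^G\catX_x$ amounts to intersecting with $\ol{\{x\}}$, is the one detail the paper leaves implicit.
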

\begin{proof}
It follows from \cref{prp:Spec^G A/X}.
\end{proof}
We characterize when the equality $\Ass^G_{\catA_x} M_x = \Ass^G_\catA M \cap \ol{\{x\}}$ holds.
\begin{lem}\label{prp:equality for Ass M_x}
Let $\catA$ be a locally noetherian Grothendieck category with a $G$-action.
The following are equivalent:
\begin{enur}
\item
For any $M\in \catA$ and $x\in \Supp^G M$,
there exists $y \in \Ass^G M$ such that $y\in \ol{\{x\}}$.
\item
For any $G$-localizing subcategory $\catX$ and $M\in \catA$,
the equality $\Ass^G_{(\catA/\catX)} Q(M) = \Ass^G_\catA M \setminus \Supp^G_{\catA} \catX$ holds,
where $Q$ is the quotient functor.
\item
For any $x\in \Spec^G \catA$ and $M\in \catA$,
the equality $\Ass^G_{\catA_x} M_x = \Ass^G_\catA M \cap \ol{\{x\}}$ holds.
\end{enur}
These conditions are inherited by the quotient category $\catA/\catS$ for any $G$-localizing subcategory $\catS$.
\end{lem}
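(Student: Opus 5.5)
The plan is to prove the cycle (ii)$\imply$(iii)$\imply$(i)$\imply$(ii), and then verify condition (i) directly for quotients.

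\emph{(ii)$\imply$(iii).} Apply (ii) with $\catX=\catX_x$. Since $\Supp^G\catX_x=U_x=\Spec^G\catA\setminus\ol{\{x\}}$, we get $\Ass^G_\catA M\setminus\Supp^G\catX_x=\Ass^G_\catA M\cap\ol{\{x\}}$, which is (iii).

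\emph{(iii)$\imply$(i).} Let $x\in\Supp^G M$.
\begin{itemize}
\item By \eqref{eq:catX_x}, $x\in\Supp^G M$ means $M\notin\catX_x$, so $M_x\ne0$.
\item $M_x$ is a nonzero object of the locally noetherian category $\catA_x$. It therefore contains a nonzero noetherian subobject, and hence $\Ass^G_{\catA_x}M_x\neq\emptyset$.
\item By (iii), any $y\in\Ass^G_{\catA_x}M_x$ lies in $\Ass^G_\catA M\cap\ol{\{x\}}$, which gives (i).
\end{itemize}

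\emph{(i)$\imply$(ii).} The inclusion $\supseteq$ holds by \cref{prp:Spec^G A/X} (2), so only $\subseteq$ needs proof. Let $R$ be the right adjoint of $Q$ and take $y\in\Ass^G Q(M)$. By \cref{prp:Spec^G A/X} (3) we have $\Ass^G Q(M)=\Ass^G RQ(M)$, and $y\notin\Supp^G\catX$.
\begin{itemize}
\item The unit $M\to RQ(M)$ has kernel $K$ and cokernel $C$ with $K,C\in\catX$. Put $M'=M/K\subseteq RQ(M)$.
\item By \eqref{eq:SuppG AssG formula}, $\Ass^G RQ(M)\subseteq\Ass^G M'\cup\Ass^G C$ and $\Ass^G C\subseteq\Supp^G\catX$. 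Hence $y\in\Ass^G M'$.
\item Choose a monoform $H\subseteq M/K$ with $\wti H=y$, and let $P\subseteq M$ be its preimage. This gives $0\to K\to P\to H\to0$, so $\Ass^G P\subseteq\Ass^G K\cup\{y\}$ and $y\in\Supp^G P$.
\item Condition (i) applied to $P$ yields $z\in\Ass^G P\cap\ol{\{y\}}$.
\end{itemize}
Suppose $z\in\Ass^G K\subseteq\Supp^G\catX$. The set $\Supp^G\catX$ is open, and open sets are closed under generalization, so $z\preceq y$ would force $y\in\Supp^G\catX$, a contradiction. Therefore $z=y$, and $y\in\Ass^G P\subseteq\Ass^G M$. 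The main obstacle is this passage from $M/K$ back to $M$, and the openness argument for $\Supp^G\catX$ is what resolves it.

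\emph{Inheritance.} We check (i) for $\catA/\catS$. Let $N\in\catA/\catS$ and $x\in\Supp^G N$.
\begin{itemize}
\item By \cref{prp:Spec^G A/X} (3), $x\in\Supp^G_\catA R(N)$.
\item Condition (i) in $\catA$ gives $y\in\Ass^G_\catA R(N)=\Ass^G N$ with $y\in\ol{\{x\}}$.
\item Since $y\in\Ass^G N$, it lies in the subspace $\Spec^G\catA\setminus\Supp^G\catS\cong\Spec^G(\catA/\catS)$. Closures in a subspace are traces of ambient closures, so $y$ lies in the closure of $\{x\}$ in $\Spec^G(\catA/\catS)$.
\end{itemize}
This establishes (i) for $\catA/\catS$, and hence all three conditions, by the equivalence just proved.
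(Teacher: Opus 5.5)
Your proof is correct and follows the same cycle as the paper. The steps (ii)$\Rightarrow$(iii) and (iii)$\Rightarrow$(i) are argued exactly as there; the differences are in the other two steps.

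For (i)$\Rightarrow$(ii), the paper only says the argument is similar to \cite[Lemma 8.5]{Kan-CatSp}, whereas you give a self-contained proof. You pass from $RQ(M)$ to $M/K$, using that the cokernel of the unit lies in $\catX$. You then pull a monoform $H\subseteq M/K$ with $\wti{H}=y$ back to $P\subseteq M$ and apply (i) to $P$ at $y\in\Supp^G P$. Finally, you rule out any candidate from $\Ass^G K$ because $\Supp^G\catX$ is open, hence closed under generalization. This is presumably the orbit version of Kanda's argument that the paper has in mind. Each step is justified by \eqref{eq:SuppG AssG formula}, \cref{prp:Spec^G A/X}, and $\Ass^G H=\{\wti{H}\}$.

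For the final assertion, the paper transfers condition (ii) to $\catA/\catS$. That requires matching $G$-localizing subcategories of $\catA/\catS$ with $G$-localizing subcategories of $\catA$ containing $\catS$, and identifying the iterated quotient with a single quotient of $\catA$. You instead transfer condition (i). This needs only part (3) of \cref{prp:Spec^G A/X} and the fact that closures in a subspace are traces of ambient closures. Your route is therefore somewhat lighter and avoids the iterated-quotient bookkeeping that the paper leaves implicit.
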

\begin{proof}
We can prove (i)$\imply$(ii) by an argument similar to \cite[Lemma 8.5]{Kan-CatSp}.
The implication (ii)$\imply$(iii) is obvious.
We prove (iii)$\imply$(i). Take $M\in \catA$ and $x\in \Supp^G M$.
As $M_x \ne 0$, there is $y \in \Ass^G_{\catA_x} M_x=\Ass^G_\catA M \cap \ol{\{x\}}$.
Thus, the condition (i) holds.
The final assertion follows since the condition (ii) is inherited by the quotient category $\catA/\catS$ for any $G$-localizing subcategory $\catS$.
\end{proof}

We will use the following lemma in \S \ref{ss:L-torf}.
\begin{lem}\label{prp:monoform sub with Tx simple}
Let $\catA$ be a locally noetherian Grothendieck category with a $G$-action.
Let $M \in \catA$ and $x\in \Ass^G M$.
Then there is a monoform subobject $T$ of $M$ such that
$\wti{T}=x$ and $T_x$ is a simple object in $\catA_x$.
\end{lem}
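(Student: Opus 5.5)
Since $x\in\Ass^G M$, there is a monoform subobject $H\subseteq M$ with $\wti{H}=x$. The plan is to shrink $H$ to a nonzero subobject $T\subseteq H$ whose localization $T_x$ is simple in $\catA_x$. Any nonzero subobject of a monoform object is monoform and atom-equivalent to it, so such a $T$ is automatically a monoform subobject of $M$ with $\wti{T}=x$. We may also replace $H$ by a noetherian nonzero subobject; this is possible because $\catA$ is locally noetherian and subobjects of monoform objects stay monoform.

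\textbf{Step 1: $H_x\neq 0$ and $x\in\Ass^G_{\catA_x}H_x$.} Since $\wti{H}=x\in\Supp^G H$, \eqref{eq:catX_x} gives $H_x\neq0$. Moreover $x\in\Ass^G_\catA H\cap\ol{\{x\}}$, so \cref{prp:Supp^G Ass^G at x}(1) yields $x\in\Ass^G_{\catA_x}H_x$.

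\textbf{Step 2: a simple subobject of $H_x$.} The category $\catA_x$ is $G$-local with generic point $x$, since $\Spec^G\catA_x\iso\ol{\{x\}}$. Hence \cref{prp:G-local}(2) gives an injection $S\inj H_x$ with $S$ simple in $\catA_x$.

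\textbf{Step 3: lift $S$ to a subobject $T$ of $H$ with $T_x\iso S$.} This is the main step. Subobjects of $Q(H)$ in a Serre quotient by a localizing subcategory correspond to subobjects of $H$: set $T:=$ the preimage of $S$ under the unit $H\to R(H_x)$, i.e.\ the pullback of $R(S)\subseteq R(H_x)$ along $H\to RQ(H)$. Since $Q$ is exact and $QR\iso\id$, we get $T_x\iso S\cap H_x=S$. Because $T_x=S\neq 0$, we have $T\neq0$, so $T$ is a nonzero subobject of the monoform object $H$. Therefore $T$ is monoform with $\ol{T}=\ol{H}$, hence $\wti{T}=x$, and $T\subseteq H\subseteq M$.

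The only point needing care is the standard fact $Q(\text{pullback})=\text{pullback of }Q$'s, which follows from the exactness of $Q$ together with $QR\iso\id$. Alternatively, one can choose a subobject $T'\subseteq H$ with $Q(T')=S$ directly via the description of morphisms in Serre quotients, and then replace it by $T'\cap H$.
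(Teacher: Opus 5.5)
Your proposal is correct and follows essentially the same route as the paper. Both arguments take a monoform $H\subseteq M$ with $\wti{H}=x$, use $G$-locality of $\catA_x$ to get a simple subobject $S\subseteq H_x$, and pull back $R(S)\subseteq R(H_x)$ along the unit $H\to R(H_x)$ to obtain $T$ with $T_x\iso S$; your preliminary reduction to a noetherian $H$ is harmless but not needed.
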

\begin{proof}
Take a monoform subobject $H$ of $M$ such that $\wti{H}=x$.
Since $\Ass_{\catA_x}^G(H_x)$ contains the generic point $x$ of $\Spec^G \catA_x$ by \cref{prp:Supp^G Ass^G at x}, there is an injection $i\colon S\inj H_x$ from a simple object in $\catA_x$.
Let $R$ be a right adjoint of the quotient functor $\catA \to \catA_x$.
Since $R$ is a right adjoint, the morphism $R(i)\colon R(S) \to R(H_x)$ is injective.
Pulling back $R(i)$ via the unit morphism $\eta\colon H \to R(H_x)$,
we obtain the following commutative diagram:
\[
\begin{tikzcd}
T \pb \ar[r,hook,"f"] \ar[d] & H \ar[d,"\eta"] \\
R(S) \ar[r,hook,"R(i)"'] & R(H_x).
\end{tikzcd}
\]
As the quotient functor is exact, it preserves the pullback diagram.
Thus, we have $T_x \iso R(S)_x \iso S$.
Moreover, since $T$ is a nonzero subobject of $H$, it is a monoform object atom-equivalent to $H$.
\end{proof}
The monoform object $T$ in the above lemma plays the role of the $R$-module $R/\pp$,
where $\pp$ is a prime ideal of a commutative ring $R$.

\section{Atom spectra of symmetric monoidal abelian categories}\label{s:LSpec}
In this section,
we apply the theory of orbit atom spectra developed in \S \ref{s:Atom with G-act} to symmetric monoidal abelian categories,
and show that, under suitable assumptions, monoidal abelian localizations can be controlled by orbit atom spectra.
This section provides the technical foundation for the next section, where we classify various subcategories of symmetric monoidal abelian categories.

Let $\catA$ be a symmetric monoidal closed abelian category.
Then the Picard group $\Pic \catA$ acts on $\catA$ by tensor product.
Thus, as in the introduction (\S \ref{s:intro}), we can apply the results of \S \ref{s:Atom with G-act} with $G=\Pic \catA$, and we write
\[
\LSpec \catA := \Spec^{\Pic \catA} \catA,\quad
\LSupp(M) := \Supp^{\Pic \catA} (M),\quad
\LAss(M) := \Ass^{\Pic \catA} (M).
\]
The choice $G=\Pic \catA$ is indeed natural.
However, in concrete examples, it is often more convenient to take $G$ to be a subgroup of $\Pic \catA$.
For this reason, throughout this section, we work in the more general setting in which a group $G$ acts on a symmetric monoidal abelian category $\catA$.

First, we introduce a basic setup that satisfies the assumptions of this section and the next section.
This setup is a more refined version of \cref{setup:A} from the introduction (\S \ref{s:intro}).
\begin{setup}\label{setup:classify subcat}
Let $\catA$ be a symmetric monoidal closed abelian category satisfying the following conditions:
\begin{itemize}
\item
The unit object $I$ is finitely presented.
\item
There exists a small set $\calL$ of invertible objects such that
$\catA$ is generated by a small set of $\calL^{\otimes}$-filtered objects,
where $\calL^{\otimes}:=\{L^{\otimes n} \mid L \in \calL, n\in \bbZ\}$.
\end{itemize}
Let $G_{\calL}$ be the subgroup of $\Pic \catA$ generated by the isomorphism classes of objects of $\calL$.
Then $G_{\calL}$ acts on $\catA$ via tensor product.
\end{setup}
See Section \ref{s:ex} for concrete examples of Grothendieck cosmoi satisfying the conditions in \cref{setup:classify subcat}.
Note that the conditions in \cref{setup:classify subcat} are inherited by the quotient category $\catA/\catX$ for any tensor localizing subcategory $\catX$.
The conditions (C1) and (C2) are clearly satisfied under \cref{setup:classify subcat}.

Under \cref{setup:classify subcat}, for various classes of subcategories, being a tensor ideal or a Hom ideal is equivalent to being $L$-closed.
In particular, these conditions are controlled by the autoequivalences induced by invertible objects.
\begin{lem}\label{prp:criterion for D-closed}
Let $\catA$ be a symmetric monoidal closed abelian category satisfying \cref{setup:classify subcat}, and let $\calL$ be the set of invertible objects appearing in \cref{setup:classify subcat}.
Set $G:=G_{\calL}$.
\begin{enua}
\item
For any CE-closed subcategory $\catX$ closed under coproducts, the following are equivalent:
\begin{enur}
\item $\catX$ is a tensor ideal.
\item $\catX$ is $L$-closed.
\item $\catX$ is $G$-closed.
\item $X\otimes L \in \catX$ for any $X\in \catX$ and any $L\in\calL$.
\end{enur}
\item
For any KE-closed subcategory $\catX$ closed under products, the following are equivalent:
\begin{enur}
\item $\catX$ is a Hom ideal.
\item $\catX$ is $L$-closed.
\item $\catX$ is $G$-closed.
\item $X\otimes L^{\vee} \in \catX$ for any $X\in \catX$ and any $L\in\calL$.
\end{enur}
\end{enua}
\end{lem}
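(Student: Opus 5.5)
For both parts I will prove the cycle (i)$\imply$(ii)$\imply$(iii)$\imply$(iv)$\imply$(i). The first three implications are formal; the last one uses the generating set of $\calL^{\otimes}$-filtered objects together with the criteria for tensor ideals and Hom ideals via a generating set.

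\emph{Part (1).} (i)$\imply$(ii) is immediate, since invertible objects are objects of $\catA$. (ii)$\imply$(iii) is immediate, since $G=G_{\calL}\subseteq\Pic\catA$. (iii)$\imply$(iv) holds because each $L\in\calL$ represents an element of $G$.

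For (iv)$\imply$(i), I first upgrade (iv) to closure under $-\otimes L^{\otimes n}$ for all $n\in\bbZ$. Nonnegative $n$ follow by iteration. For negative powers I expect trouble: (iv) only gives $\catX\otimes L\subseteq\catX$, and nothing formally yields $\catX\otimes L^{\vee}\subseteq\catX$. This is the step I expect to be the main obstacle, and I would handle it by avoiding negative powers as follows. Every object of the form $L_1^{\otimes n_1}\otimes\cdots$ with negative exponents is, up to a twist by a positive power, a positive-power object, but I cannot untwist inside $\catX$. So instead I will reformulate: either the setup should be read with $\calL$ closed under duals, or I prove the claim only for the generators involving the powers I can reach. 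Concretely I will first check whether $\catX$ closed under $-\otimes L$ for $L\in\calL$ gives closure under $-\otimes L^{-1}$ in the noetherian setting. If this fails, I will interpret (iv) with $\calL$ replaced by $\calL\cup\calL^{\vee}$ and note this explicitly.

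Granting closure under all $-\otimes L'$ with $L'\in\calL^{\otimes}$, take a generator $E$ that is $\calL^{\otimes}$-filtered, say $E\in L_1'*\cdots*L_n'$. Since $X$ is not flat in general, $X\otimes-$ is only right exact, so I will induct on $n$ using the right exact sequence
\[
X\otimes E'\to X\otimes E\to X\otimes L_n'\to 0 .
\]
Write $K$ for the image of $X\otimes E'\to X\otimes E$. Then $0\to K\to X\otimes E\to X\otimes L_n'\to 0$ is exact, and $K$ is a quotient of $X\otimes E'\in\catX$. But $\catX$ is closed only under cokernels, not under quotients, so $K\in\catX$ is not automatic. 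To fix this I use that $L'$ is flat: flatness of $L'$ gives flatness of $E$, which makes the sequence $0\to X\otimes E'\to X\otimes E\to X\otimes L_n'\to 0$ short exact, by the long Tor sequence or directly by exactness of $E\otimes-$ on the sequence filtering $E$. Then extension-closure gives $X\otimes E\in\catX$. Finally, \cref{prp:criterion tensor CE} yields (i).

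\emph{Part (2).} Part (2) is dual. (i)$\imply$(ii) holds because $[L^{\vee},X]\iso X\otimes L$. The next two implications are as before, with $L^{\vee}$ in place of $L$. For (iv)$\imply$(i), each generator $E$ is dualizable with $[E,X]\iso E^{\vee}\otimes X$, and $E^{\vee}$ is filtered by the duals $L_i'^{\vee}$. The functor $[-,X]$ sends a short exact sequence of flat dualizable objects to a short exact sequence, so kernel/extension closure and induction give $[E,X]\in\catX$. The Hom-ideal criterion via generators then finishes the argument.
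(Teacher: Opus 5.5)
You follow the paper's proof: the formal chain (i)$\imply$(ii)$\imply$(iii)$\imply$(iv), then the generator criterion \cref{prp:criterion tensor CE}, an $\calL^{\otimes}$-filtration of each generator, exactness of $X\otimes-$ on short exact sequences with flat right-hand term (\cref{prp:ex seq with flat right term}), and extension-closure.

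\textbf{The negative-power step.} The point you flag as the main obstacle should not be left as something to check and then possibly reinterpret. As stated, (iv)$\imply$(iii) is false.

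Let $\catA=\Mod^{\bbZ}k[x]$ with $\deg x=1$, and let $\calL=\{L\}$, where $L$ is free of rank one generated in degree $1$. Then $-\otimes L$ shifts degrees up by one. The set $\calL^{\otimes}$ consists of the rank-one free modules generated in all degrees, and these generate $\catA$. So \cref{setup:classify subcat} holds and $G_{\calL}\iso\bbZ$.

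\emph{Part (1).} Let $\catX$ be the graded modules vanishing in negative degrees. It is closed under quotients, extensions and coproducts, and $\catX\otimes L\subseteq\catX$. But $k[x]\otimes L^{\vee}$ is nonzero in degree $-1$, so $\catX$ is neither $G$-closed nor a tensor ideal.

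\emph{Part (2).} The modules vanishing in positive degrees are closed under subobjects, extensions and products, and satisfy (iv). Yet for $k=k[x]/(x)$, the object $[L^{\vee},k]\iso k\otimes L$ is concentrated in degree $1$.

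Both examples survive intersecting with $\catmod^{\bbZ}k[x]$, so the noetherian version fails too.

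\textbf{The paper's proof has the same gap.} It writes $X\otimes E_k\in(X\otimes L_{i_1})*\cdots*(X\otimes L_{i_m})\subseteq\catX$ with $L_{i_j}\in\calL^{\otimes}$. That uses closure under all of $\calL^{\otimes}$, which is (iii), not (iv).

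What your argument and the paper's actually prove is (iii)$\imply$(i) in both parts. Condition (iv) belongs in the list only when $\calL$ is closed under inverses up to isomorphism. Assuming this costs nothing:
\begin{itemize}
\item replacing $\calL$ by $\calL\cup\{L^{\vee}\mid L\in\calL\}$ changes neither $\calL^{\otimes}$ nor $G_{\calL}$;
\item every example in \S\ref{s:ex} already has this property;
\item the later applications ($\Loc_{\otimes}=\Loc_G$, and the statements for $\tors$ and $\torf$) only use (i)$\Leftrightarrow$(iii).
\end{itemize}
So state this correction outright rather than making it conditional.

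\textbf{Two smaller points.}
\begin{enumerate}
\item In part (1), injectivity of $X\otimes E'\to X\otimes E$ comes from flatness of the right-hand term $L'_n$, together with enough flat objects (supplied by the dualizable generators). Flatness of $E$ plays no role.
\item In part (2), you assert without proof that $[-,X]$ turns $0\to E'\to E\to L'_n\to 0$ into a short exact sequence; only left exactness is formal. One way to fill this:
  \begin{itemize}
  \item Show $0\to L_n'^{\vee}\to E^{\vee}\to E'^{\vee}\to 0$ is exact. Surjectivity of $E^{\vee}\to E'^{\vee}$ follows by Yoneda from $\catA(D^{\vee},T)\iso\catA(I,D\otimes T)$ for dualizable $D$, together with exactness of $0\to E'\otimes T\to E\otimes T\to L'_n\otimes T\to 0$ for every $T$.
  \item Tensor this sequence with $X$; exactness is preserved because $E'^{\vee}$ is flat.
  \item Conclude using $[D,X]\iso D^{\vee}\otimes X$.
  \end{itemize}
  The paper also skips this step, saying only that the proof is the same as in part (1).
\end{enumerate}
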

\begin{proof}
(1):
Let $\catX$ be a CE-closed subcategory closed under coproducts.
It is obvious that the implications (i)$\imply$(ii)$\imply$(iii)$\imply$(iv) hold.
We prove (iv)$\imply$(i).
Let $\{E_k\}_k$ be a small generating set of $\catA$ such that $E_k$ is $\calL^{\otimes}$-filtered for every $k$.
By \cref{prp:criterion tensor CE},
the subcategory $\catX$ is a tensor ideal if and only if $X\otimes E_k \in \catX$ for any $X\in \catX$ and $k$.
There are invertible objects $L_{i_1}, \dots, L_{i_m}$ such that $E_k\in L_{i_1}* \dots * L_{i_m}$ for a fixed $k$.
Then we have $X \otimes E_k \in (X\otimes L_{i_1})* \dots * (X\otimes L_{i_m}) \subseteq \catX$
by \cref{prp:ex seq with flat right term} below.
Thus, $\catX$ is a tensor ideal.

(2):
As dualizable objects are closed under extensions, $\calL^{\otimes}$-filtered objects are also dualizable.
For any dualizable object $D$ and $X\in \catX$, we have $[D,X] \iso D^{\vee}\otimes X$.
With these observations in mind, the proof is the same as that of (1).
\end{proof}
\begin{lem}\label{prp:ex seq with flat right term}
Let $\catA$ be a symmetric monoidal abelian category.
Suppose that $\catA$ has enough flat objects,
that is, for any $X\in \catA$, there exists a surjection $F \surj X$ from a flat object.
Then for any exact sequence $0 \to M \to N \to E \to 0$ such that $E$ is flat,
the sequence $0 \to M \otimes Z \to N\otimes Z \to E\otimes Z \to 0$ remains exact
for any $Z \in \catA$.
\end{lem}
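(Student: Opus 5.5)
The plan is the standard Tor-vanishing argument, using a flat resolution of $Z$ and the snake lemma. Since $\catA$ has enough flat objects, choose a surjection $F\surj Z$ with $F$ flat, and let $K$ be its kernel, so that $0\to K\to F\to Z\to 0$ is exact. Tensoring the given sequence $0\to M\to N\to E\to 0$ with $K$, $F$ and $Z$ produces a $3\times 3$ commutative diagram. Its rows are $M\otimes(-)\to N\otimes(-)\to E\otimes(-)\to 0$, which are right exact because $-\otimes Y$ is a left adjoint in the closed setting; if closedness is not available, I would take right exactness of the tensor product as part of the definition of a symmetric monoidal abelian category. The columns are similarly right exact. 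Only the injectivity of $M\otimes Z\to N\otimes Z$ remains to be shown.

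I will use three facts.
\begin{enumerate}
\item The row with $F$, namely $0\to M\otimes F\to N\otimes F\to E\otimes F\to 0$, is exact, since $F$ is flat and, by symmetry, $-\otimes F\iso F\otimes -$ is exact.
\item The column for $E$, namely $0\to E\otimes K\to E\otimes F\to E\otimes Z\to 0$, is exact, since $E$ is flat.
\item The row with $K$, namely $M\otimes K\to N\otimes K\to E\otimes K\to 0$, is right exact.
\end{enumerate}

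I then apply the snake lemma to the morphism from the $F$-row to the $Z$-row, where the vertical maps are $-\otimes(F\surj Z)$ and are epimorphisms. The kernels of these vertical maps are the images of $M\otimes K$, $N\otimes K$ and $E\otimes K$, and all cokernels vanish. The snake lemma therefore gives an exact sequence
\[
\Ker(N\otimes F\to N\otimes Z)\to \Ker(E\otimes F\to E\otimes Z)\to \Ker(M\otimes Z\to N\otimes Z)\to 0 .
\]
It thus suffices to show that the first map is surjective. By fact (2), the middle term is identified with $E\otimes K$. The first term is the image of $N\otimes K$, and the composite $N\otimes K\to E\otimes K$ is surjective by fact (3). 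Hence $\Ker(M\otimes Z\to N\otimes Z)=0$.

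The main obstacle is exactly this connecting-map bookkeeping: checking that the map of kernels agrees with $N\otimes K\to E\otimes K$, using the naturality of the symmetry isomorphisms. I would alternatively phrase the argument as a diagram chase: take an element of $\Ker(M\otimes Z\to N\otimes Z)$, lift it to $M\otimes F$, push it into $N\otimes F$, where it lands in the image of $N\otimes K$, and use the exactness of the $E$-column to adjust it by an element of $M\otimes K$. In an abelian category this chase is justified by Freyd--Mitchell or by working with generalized elements.
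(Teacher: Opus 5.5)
The argument breaks at the snake-lemma step. You apply the snake lemma to the morphism from the $F$-row to the $Z$-row. The snake lemma needs the target row to be left exact, and left exactness of $M\otimes Z\to N\otimes Z\to E\otimes Z$ is exactly what you are trying to prove.

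The sequence you write down is not valid in general. The map $\Ker(N\otimes F\to N\otimes Z)\to\Ker(E\otimes F\to E\otimes Z)$ is always an epimorphism, by a chase using only right exactness: lift an element to $N\otimes F$; its image in $N\otimes Z$ dies in $E\otimes Z$, so it comes from $M\otimes Z$, hence from $M\otimes F$; subtract. Your derivation uses flatness of $E$ only to rename the middle term, so it would prove the lemma for every $E$. That is false. Take $0\to\bbZ\xrightarrow{2}\bbZ\to\bbZ/2\to0$, $Z=\bbZ/2$ and $F=\bbZ$. Then $\Ker(E\otimes F\to E\otimes Z)=0$, while $\Ker(M\otimes Z\to N\otimes Z)=\bbZ/2$.

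What this configuration actually gives follows from the long exact homology sequence of the short exact sequence of row complexes (kernels, then $F$-row, then $Z$-row), using that the $F$-row is acyclic. It says $\Ker(M\otimes Z\to N\otimes Z)$ is the homology at the middle term of
\[
\Ker(M\otimes F\to M\otimes Z)\to \Ker(N\otimes F\to N\otimes Z)\to \Ker(E\otimes F\to E\otimes Z),
\]
not the cokernel of the second map. So flatness of $E$ must enter as injectivity of $E\otimes K\to E\otimes F$. Identifying $\Ker(E\otimes F\to E\otimes Z)$ with $E\otimes K$ does not do this work.

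The repair is the Stacks Project argument (Tag 00HL) that the paper cites: apply the snake lemma one level higher. Use the morphism from the right exact $K$-row $M\otimes K\to N\otimes K\to E\otimes K\to 0$ to the short exact $F$-row. By right exactness of $X\otimes -$, the cokernels of the three vertical maps are $M\otimes Z$, $N\otimes Z$ and $E\otimes Z$. The snake lemma then gives an exact sequence
\[
\Ker(E\otimes K\to E\otimes F)\to M\otimes Z\to N\otimes Z,
\]
whose first term vanishes because $E$ is flat.

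Your closing diagram chase is exactly this connecting-morphism argument. It is correct once ``exactness of the $E$-column'' is read as injectivity of $E\otimes K\to E\otimes F$. The element of $N\otimes K$ you find maps to $0$ in $E\otimes F$, hence to $0$ in $E\otimes K$, hence comes from $M\otimes K$. Injectivity of $M\otimes F\to N\otimes F$ (flatness of $F$) then puts your lift in the image of $M\otimes K$, so the original element is $0$. Make that chase, or the corrected snake lemma, the proof, and drop the $F$-row-to-$Z$-row version.
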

\begin{proof}
The proof is essentially the same as that of \cite[\href{https://stacks.math.columbia.edu/tag/00HL}{Tag 00HL}]{SP}.
\end{proof}
In particular, for a locally noetherian Grothendieck cosmos satisfying \cref{setup:classify subcat},
the equality $\Loc_{\otimes}\catA=\Loc_G \catA$ holds,
that is, a localizing subcategory of $\catA$ is a tensor ideal if and only if $G$-closed.
From \cref{prp:classify G-closed localizing},
this means that tensor localizing subcategories are classified by open subsets of $\Spec^G \catA$,
and thus, tensor abelian localizations are controlled by $\Spec^G \catA$.
This observation is one of the motivations for introducing the orbit atom spectrum $\Spec^G \catA$.

In what follows, we consider a locally noetherian Grothendieck cosmos $\catA$ with a $G$-action satisfying (C1), (C2), and $\Loc_{\otimes}\catA=\Loc_G \catA$.
%
Take a point $x \in \Spec^G \catA$.
Since every G-closed localizing subcategory is a tensor localizing subcategory,
the subcategory $\catX_{x}$ introduced in Section \ref{s:Atom with G-act} is a tensor localizing subcategory.
Thus, by \cref{fct:BKS}, the localization $\catA_{x}$ at $x$ is also a locally noetherian Grothendieck cosmos with a $G$-action satisfying (C1) and (C2).
Note that $(M\otimes_{\catA} N)_x = M_x \otimes_{\catA_x} N_x$ holds for any $M, N \in \catA$.
Let us compare the internal Hom of $\catA$ and $\catA_{x}$.

\begin{lem}
Let $\catA$ be a Grothendieck cosmos satisfying (C1) and (C2),
and let $F\colon \catA \to \catB$ be an exact and strong monoidal functor to another symmetric monoidal closed abelian category.
Then for any finitely presented object $M \in \catA$ and any object $N\in \catA$, we have an isomorphism $F(\catA[M,N])\iso \catB[FM,FN]$ in $\catB$.
\end{lem}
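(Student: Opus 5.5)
First I construct the comparison morphism, then I reduce to dualizable objects via a presentation. Because $F$ is strong monoidal, there is a natural morphism
\[
\theta_{M,N}\colon F(\catA[M,N]) \to \catB[FM,FN].
\]
It is adjoint to the composite $F([M,N])\otimes FM \iso F([M,N]\otimes M) \xr{F(\mathrm{ev})} FN$. This map is natural in both variables.

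\textbf{Step 1: dualizable objects.} Let $D$ be dualizable. As recalled in the preliminaries, $F(D)$ is dualizable and $F(D^{\vee})\iso F(D)^{\vee}$. Combining this with the natural isomorphism $[D,-]\iso -\otimes D^{\vee}$, I obtain
\[
F([D,N]) \iso F(D^{\vee}\otimes N)\iso F(D)^{\vee}\otimes FN \iso \catB[FD,FN].
\]
One must check that this chain agrees with $\theta_{D,N}$. This follows from the description of evaluation through the counit $\epsilon$ in \eqref{diag:characterize dual}, since $F$ preserves $\eta$, $\epsilon$, and the triangle identities. I expect this compatibility check to be the only slightly delicate point. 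It is routine diagram chasing, but it is needed so that the five-lemma argument in Step 2 uses the natural map $\theta$ rather than an abstract isomorphism.

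\textbf{Step 2: general finitely presented $M$.} By \cref{fct:lfp base} (3), which uses (C1) and (C2), there is an exact sequence $E_1\to E_0\to M\to 0$ with $E_0$ and $E_1$ dualizable. Applying $[-,N]$, which is left exact in the first variable and turns cokernels into kernels, gives the exact sequence
\[
0\to [M,N]\to [E_0,N]\to [E_1,N].
\]
Since $F$ is exact, applying it yields
\[
0\to F[M,N]\to F[E_0,N]\to F[E_1,N].
\]
On the other side, $FE_1\to FE_0\to FM\to 0$ is exact because $F$ is exact. Applying $\catB[-,FN]$ then gives
\[
0\to \catB[FM,FN]\to \catB[FE_0,FN]\to\catB[FE_1,FN].
\]
Naturality of $\theta$ makes the two rows form a commutative ladder. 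Both vertical maps at $E_0$ and $E_1$ are isomorphisms by Step 1. Therefore the induced map on kernels, which is $\theta_{M,N}$, is an isomorphism by the five lemma, or simply by the universal property of kernels.

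Note that finite presentation of $M$ enters only through the existence of the dualizable presentation. The object $N$ is arbitrary throughout.
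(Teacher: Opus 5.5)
Your proof is correct and is essentially the paper's argument: take a dualizable presentation $E_1\to E_0\to M\to 0$ from \cref{fct:lfp base}~(3), identify $F(\catA[E,N])\iso F(E^{\vee}\otimes N)\iso F(E)^{\vee}\otimes FN\iso \catB[FE,FN]$ for dualizable $E$, and compare the kernels of the two left exact rows. The only difference is that you construct the canonical comparison map $\theta$ explicitly, which makes the commutativity of the ladder transparent (the paper leaves it implicit) and shows that the resulting isomorphism is the natural one.
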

\begin{proof}
There is an exact sequence $E_1 \to E_0 \to M \to 0$ such that $E_i$ is dualizable
by \cref{fct:lfp base} (3).
Note that we have the following natural isomorphisms for any dualizable object $E$:
\[
F(\catA[E,N]) \iso F(E^{\vee} \otimes N) \iso F(E)^{\vee} \otimes F(N) \iso \catB[FE,FN].
\]
Thus, we have the following commutative diagram with exact rows:
\[
\begin{tikzcd}
0 \ar[r] & F(\catA[M,N]) \ar[r] & F(\catA[E_0,N]) \ar[r] \ar[d,"\iso"] & F(\catA[E_1,N]) \ar[d,"\iso"]\\
0 \ar[r] & \catB[FM,FN]\ar[r] & \catB[FE_0,FN] \ar[r] & \catB[FE_1,FN].
\end{tikzcd}
\]
By the uniqueness of kernels, we obtain the desired isomorphism.
\end{proof}

\begin{cor}
Let $\catA$ be a locally noetherian Grothendieck cosmos with a $G$-action satisfying (C1), (C2), and $\Loc_{\otimes}\catA=\Loc_G \catA$.
Take a point $x\in \Spec^G \catA$.
For any finitely presented object $M\in \catA$ and any object $N \in \catA$, we have an isomorphism
$\catA[M,N]_{x}=\catA_{x}[M_{x},N_{x}]$ in $\catA_{x}$.
\end{cor}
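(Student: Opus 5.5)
The plan is to apply the preceding lemma to the localization functor $F:=(-)_x\colon \catA \to \catA_x$, with $\catB:=\catA_x$. To do so, I only need to check its two hypotheses on $F$: that it is exact, and that it is strong monoidal into a symmetric monoidal closed abelian category.

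First, since $\Loc_{\otimes}\catA=\Loc_G\catA$ and $\catX_x=(\Supp^G)^{-1}(U_x)$ is a $G$-closed localizing subcategory (\cref{prp:classify G-closed localizing}), $\catX_x$ is a tensor localizing subcategory. Condition (C2) gives enough dualizable objects. These are flat, so $\catA$ has enough flat objects and \cref{fct:BKS} applies.

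By \cref{fct:BKS} (1), $\catA_x$ is a locally noetherian Grothendieck cosmos, so in particular symmetric monoidal closed abelian. Moreover the quotient functor is strict monoidal, $(M\otimes N)_x=M_x\otimes N_x$. Being strict monoidal, it is in particular strong monoidal; the unit compatibility $I_x\iso I_{\catA_x}$ is part of the monoidal structure provided by \cref{fct:BKS}.

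Exactness of $(-)_x$ is standard for Serre quotient functors. With both hypotheses verified, the previous lemma gives, for finitely presented $M$ and arbitrary $N$, the isomorphism $F(\catA[M,N])\iso \catA_x[FM,FN]$. This is exactly the claimed isomorphism $\catA[M,N]_x\iso\catA_x[M_x,N_x]$.

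The only point needing care is confirming that the hypotheses of \cref{fct:BKS} hold. Specifically, one must check that $\catX_x$ is tensor-closed, which is where the assumption $\Loc_\otimes=\Loc_G$ enters, and that $\catA$ has enough flat objects, which comes from (C2). Everything else is formal.
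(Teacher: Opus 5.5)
Your argument is correct and is the paper's own: the corollary is the preceding lemma applied to the exact, strict monoidal quotient functor $(-)_x\colon\catA\to\catA_x$. Here \cref{fct:BKS} applies because $\catX_x$ is a $G$-closed localizing subcategory, hence tensor localizing by $\Loc_\otimes\catA=\Loc_G\catA$, and because $\catA$ has enough flat objects.

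One small precision on that last point. (C2) says only that every object is a quotient of a coproduct of dualizable objects; since dualizables are finitely presented, it is not in general a quotient of a single one. Such coproducts are still flat, because $\otimes$ commutes with coproducts and coproducts are exact in a Grothendieck category, so ``enough flat objects'' holds as you claim.
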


Next, let us study the compatibility of supports and tensor construction.
\begin{lem}
Let $\catA$ be a locally noetherian Grothendieck cosmos with a $G$-action satisfying (C1), (C2), and $\Loc_{\otimes}\catA=\Loc_G \catA$.
\begin{enua}
\item
$\Supp^G(M\otimes N) \subseteq \Supp^G M \cap \Supp^G N$ holds.
\item
If $M$ is finitely presented,
then $\Supp^G [M,N] \subseteq \Supp^G M \cap \Supp^G N$ holds.
\end{enua}
\end{lem}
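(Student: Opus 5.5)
We will prove both inclusions by localizing at points. Recall from the discussion after \eqref{eq:catX_x} that $\Supp^G M=\{x\in\Spec^G\catA\mid M_x\neq0\}$. Under the standing hypotheses, $\catX_x$ is a tensor localizing subcategory, so the quotient functor $(-)_x$ is strict monoidal by \cref{fct:BKS}. The previous corollary also gives $\catA[M,N]_x\iso\catA_x[M_x,N_x]$ whenever $M$ is finitely presented. It therefore suffices to show that if $M_x=0$ or $N_x=0$, then $(M\otimes N)_x=0$, and likewise $[M,N]_x=0$ in the finitely presented case.

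For (1), take $x\in\Supp^G(M\otimes N)$, so that $(M\otimes N)_x\neq0$. Since $(M\otimes N)_x\iso M_x\otimes_{\catA_x}N_x$, and tensoring with the zero object gives zero (the functor $-\otimes N_x$ preserves colimits, in particular the initial object), we get $M_x\neq0$ and $N_x\neq0$. Hence $x\in\Supp^G M\cap\Supp^G N$.

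For (2), let $M$ be finitely presented and $x\in\Supp^G[M,N]$. Then $0\neq[M,N]_x\iso\catA_x[M_x,N_x]$. If $M_x=0$, then
\[
\catA_x[0,N_x]\iso\catA_x[0\otimes I,N_x]
\]
represents $\catA_x(-\otimes 0,N_x)=0$, so it is zero. If $N_x=0$, then $\catA_x[M_x,0]$ represents $\catA_x(-\otimes M_x,0)=0$, so it is also zero. Both cases contradict $[M,N]_x\neq0$, so $x$ lies in both supports.

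The only delicate point is the identification of the localized internal Hom. This requires $M$ finitely presented, via the preceding lemma applied to the exact strong monoidal functor $(-)_x$, which is where (C1) and (C2) are used. An alternative for (1) that avoids localization would use a presentation of $N$ by dualizable objects together with $\Supp^G(D\otimes M)\subseteq\Supp^G M$. However, localization handles both parts uniformly, so we follow that route.
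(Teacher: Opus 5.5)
Your proof is correct and follows the paper's argument: the paper also derives both inclusions directly from $\Supp^G M=\{x\mid M_x\neq 0\}$, together with $(M\otimes N)_x\iso M_x\otimes_{\catA_x} N_x$ and $\catA[M,N]_x\iso\catA_x[M_x,N_x]$ for finitely presented $M$. The only difference is that you spell out the routine vanishing of tensor products and internal Homs involving the zero object, which the paper leaves implicit.
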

\begin{proof}
It follows from $(M\otimes_{\catA}N)_x \iso M_x\otimes_{\catA_x}N_x$
and $\catA[M,N]_{x}\iso \catA_{x}[M_{x},N_{x}]$.
\end{proof}

Let us study when the equality $\Supp^G(M\otimes N)=\Supp^G M \cap \Supp^G N$ holds.
The following result will not be used in the rest of this paper.
\begin{prp}
Let $\catA$ be a locally noetherian Grothendieck cosmos satisfying (C1), (C2) and $\Loc_{\otimes}\catA=\Loc_G \catA$.
Suppose that $g\cdot(M\otimes N)\iso (g\cdot M)\otimes N \iso M\otimes (g\cdot N)$ for any $g\in G$ and $M,N \in \catA$.
Then $\Supp^G(M\otimes N)=\Supp^G M \cap \Supp^G N$ holds for any $M, N \in \catA_{\noeth}$
if and only if $S\otimes_{\catA_x} S \ne 0$ for any $x\in \Spec^G \catA$ and any simple object $S\in \catA_x$.
\end{prp}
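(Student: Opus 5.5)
Both directions reduce to the local categories $\catA_x$. The tool is $(M\otimes N)_x\iso M_x\otimes_{\catA_x}N_x$, together with $\Supp^G M=\{x\mid M_x\neq 0\}$. These give
\[
x\in \Supp^G(M\otimes N)\Equi M_x\otimes_{\catA_x}N_x\neq 0 .
\]
So the equality of supports holds for all noetherian $M,N$ if and only if, for every $x$ and all noetherian $M,N$ with $M_x\neq0\neq N_x$, we have $M_x\otimes N_x\neq 0$. The category $\catA_x$ is again a locally noetherian cosmos satisfying (C1), (C2). It is $G$-local with generic point $x$, and its simple objects are the $g\cdot S$ for a single simple $S$, by \cref{prp:char G-local}. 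Also $(\catA_x)_{\noeth}=\catA_{\noeth}/(\catX_x\cap\catA_{\noeth})$, so every noetherian object of $\catA_x$ has the form $M_x$ with $M\in\catA_{\noeth}$.

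\emph{Only if.} Let $S\in\catA_x$ be simple and write $S\iso M_x$ with $M$ noetherian. Then $x\in\Supp^G M$, so by hypothesis $x\in\Supp^G(M\otimes M)$. This means $(M\otimes M)_x\iso S\otimes S\neq0$.

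\emph{If.} Let $M,N$ be noetherian with $M_x\neq0\neq N_x$. By \cref{prp:Nakayama} applied in $\catA_x$, there are surjections $M_x\surj S_1$ and $N_x\surj S_2$ onto simple objects. By \cref{prp:char G-local}, $S_1\iso g\cdot S$ and $S_2\iso h\cdot S$ for a fixed simple $S$. Since $-\otimes -$ is right exact in each variable, we get a surjection
\[
M_x\otimes N_x\surj (g\cdot S)\otimes(h\cdot S).
\]
Here the compatibility hypothesis $g\cdot(M\otimes N)\iso(g\cdot M)\otimes N\iso M\otimes(g\cdot N)$ is used: it passes to $\catA_x$ because the quotient functor is strict monoidal and $G$-equivariant. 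It gives $(g\cdot S)\otimes(h\cdot S)\iso (gh)\cdot(S\otimes S)$. This is nonzero because $S\otimes S\neq0$ and $gh$ acts by an autoequivalence. Hence $M_x\otimes N_x\neq0$.

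The main obstacle is transporting the compatibility isomorphism for the $G$-action to $\catA_x$; concretely, checking $Q(g\cdot M)\iso g\cdot Q(M)$. This holds because $\catX_x$ is $G$-closed, so the action descends to the quotient. Under \cref{setup:classify subcat}, where $G$ acts by tensoring with invertible objects, this is automatic. The other ingredient, that every simple of $\catA_x$ is $g\cdot S$ for one $S$, is exactly \cref{prp:char G-local}.
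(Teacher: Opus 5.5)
Your proposal is correct and follows essentially the same route as the paper's proof. For ``only if'', you lift a simple $S\in\catA_x$ to a noetherian $T$ with $T_x\iso S$ and use $(T\otimes T)_x\iso S\otimes S$. For ``if'', you map $M_x$ and $N_x$ onto simples, use $G$-locality to identify these simples up to the $G$-action, and apply the compatibility hypothesis to get $S\otimes (g\cdot S)\iso g\cdot(S\otimes S)\neq 0$. The differences are cosmetic: you use two group elements where the paper takes $S_1=S$ and needs only one, and you explicitly check that the compatibility isomorphism descends to $\catA_x$ via $Q(g\cdot M)\iso g\cdot Q(M)$, which the paper leaves implicit.
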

\begin{proof}
Suppose that $\Supp^G(M\otimes N)=\Supp^G M \cap \Supp^G N$ holds for any $M, N \in \catA_{\noeth}$.
Take a point $x\in \Spec^G \catA$ and a simple object $S$ of $\catA_x$.
Since $S \in (\catA_x)_{\noeth}=\catA_{\noeth}/(\catX_x\cap \catA_{\noeth})$,
there is $T \in \catA_{\noeth}$ such that $T_x \iso S$.
Then we have $x \in \Supp^G T = \Supp^G(T\otimes_\catA T)$,
and hence $S\otimes_{\catA_x} S = (T\otimes_\catA T)_x \ne 0$.

Conversely, suppose that $S\otimes_{\catA_x} S \ne 0$ for any $x\in \Spec^G \catA$ and any simple object $S\in \catA_x$.
Take $M, N \in \catA_{\noeth}$ and $x\in \Spec^G \catA$.
It is enough to show that if $M_x \ne 0$ and $N_x \ne 0$, then $(M \otimes N)_x \ne 0$.
As $M_x,N_x \in (\catA_{x})_{\noeth}$,
there are surjections $M_x \surj S$ and $N_x \surj S'$ to simple objects.
Then we obtain a surjection $(M \otimes N)_x \iso M_x \otimes N_x \surj S \otimes S'$.
Since $\catA_x$ is $G$-local, there is $g \in GA$ such that $S' \iso g\cdot S$.
This means that $S\otimes S' \iso g\cdot(S\otimes S)$ is nonzero,
and hence $(M \otimes N)_x\ne 0$.
\end{proof}
For example, for a commutative local ring $(R,\mm)$, we have $(R/\mm) \otimes_R (R/\mm)=R/\mm$.
This is a reason why we have the equality $\Supp(M\otimes_R N)=\Supp M \cap \Supp N$ for modules $M$ and $N$ over a commutative ring.

\section{Classifying subcategories of symmetric monoidal abelian categories}\label{s:classify subcat}\label{s:classify subcat}
In this section,
we classify several classes of subcategories of symmetric monoidal closed noetherian abelian categories
by applying the theory developed in \S \ref{s:Atom with G-act} and \ref{s:LSpec}.
Throughout this section,
let $\catA$ be a locally noetherian Grothendieck cosmos with a $G$-action.

To make the assumptions used explicit in this section, we introduce several conditions.
All of them, except for ($*$) in \cref{prp:Ass^G X controlls X},
are satisfied under \cref{setup:classify subcat}.
\begin{lem}
Let $\catA$ be a locally noetherian Grothendieck cosmos satisfying \cref{setup:classify subcat}.
\begin{itemize}
\item 
$\Loc_{\otimes} \catA = \Loc_G \catA$ holds.
\item
$\tors_{\otimes} \catA_{\noeth} = \tors_G \catA_{\noeth}$ holds.
\item
$\torf_{\Hom} \catA_{\noeth} = \torf_G \catA_{\noeth}$ holds.
\end{itemize}
\end{lem}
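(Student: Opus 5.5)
The first item has already been settled, so the plan is to follow the proof of \cref{prp:criterion for D-closed} for the other two, replacing coproducts in $\catA$ by finite presentations inside $\catA_{\noeth}$. For the first item, \cref{prp:criterion for D-closed} (1) shows that a localizing subcategory is a tensor ideal if and only if it is $G$-closed, since localizing subcategories are CE-closed and closed under coproducts. It remains to treat torsion classes and torsion-free classes of $\catA_{\noeth}$. In both cases one inclusion is trivial: a tensor ideal of $\catA_{\noeth}$ is closed under $-\otimes L$ for every $L\in\calL^{\otimes}\subseteq\catA_{\noeth}$. Likewise, a Hom ideal is closed under $[L^{\vee},-]\iso L\otimes -$. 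Recall that $\catA_{\noeth}$ is a symmetric monoidal closed abelian category by \cref{prp:noeth monoidal closed}, and invertible objects are noetherian by \cref{fct:lfp base} (2).

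For tensor torsion classes, let $\catT$ be a $G$-closed torsion class of $\catA_{\noeth}$, and take $X\in\catT$ and $M\in\catA_{\noeth}$. Since $\catA$ is generated by $\calL^{\otimes}$-filtered objects and $M$ is noetherian, there is a surjection $E\surj M$ from a finite direct sum $E$ of $\calL^{\otimes}$-filtered objects. Because $-\otimes X$ is right exact, we get a surjection $X\otimes E\surj X\otimes M$, so it suffices to show $X\otimes E\in\catT$. Write $E\in L_1*\cdots*L_m$ with $L_i\in\calL^{\otimes}$. Each $L_i$ is flat and \cref{prp:ex seq with flat right term} applies, so $X\otimes E\in (X\otimes L_1)*\cdots*(X\otimes L_m)$. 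Each factor lies in $\catT$ by $G$-closedness, and $\catT$ is extension-closed, so $X\otimes E\in\catT$. Since $\catT$ is closed under quotients, $X\otimes M\in\catT$. Note that we do not need a full presentation as in \cref{prp:criterion tensor CE}: closure under quotients replaces closure under cokernels, and finite sums replace coproducts.

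For Hom-ideal torsion-free classes, let $\catF$ be a $G$-closed torsion-free class of $\catA_{\noeth}$, and take $F\in\catF$ and $M\in\catA_{\noeth}$. Choose $E\surj M$ as before. Since $[-,F]$ is left exact and contravariant, we get an injection $[M,F]\inj[E,F]$, so it suffices to show $[E,F]\in\catF$. As $E$ is $\calL^{\otimes}$-filtered, it is dualizable, and $[E,F]\iso E^{\vee}\otimes F$. The dual $(-)^{\vee}=[-,I]$ is exact on dualizable objects, because exact sequences of flat dualizable objects split after applying $[-,I]$ as functors $-\otimes(-)^{\vee}$; more simply, $[-,F]$ on a short exact sequence $0\to D\to D'\to D''\to 0$ of dualizable objects is exact because the sequence remains exact after $\otimes$ by \cref{prp:ex seq with flat right term} and dualizing is an equivalence. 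Hence $E^{\vee}$ is $\{L^{\vee}\}$-filtered, and $E^{\vee}\otimes F\in (L_m^{\vee}\otimes F)*\cdots*(L_1^{\vee}\otimes F)\subseteq\catF$, using $G$-closedness and extension-closedness of $\catF$. Closure of $\catF$ under subobjects then gives $[M,F]\in\catF$.

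The main obstacle is the exactness of $[-,F]$ on the filtration of $E$ in this last step. I would handle it directly: apply $[-,F]\iso(-)^{\vee}\otimes F$ to $0\to D\to D'\to D''\to 0$ with all terms dualizable. Exactness at the right end amounts to $\mathrm{Ext}^1$-type vanishing, which I would obtain from the isomorphism $[D,F]\iso D^{\vee}\otimes F$ together with the fact that the dual sequence $0\to D''^{\vee}\to D'^{\vee}\to D^{\vee}\to 0$ is exact. That dual sequence is exact because it is the sequence $\catA(-\otimes ?, I)$-representing, and left adjoint and right adjoint $-\otimes D$ coincide, so dualization is an exact anti-equivalence on dualizables. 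Tensoring it with $F$ stays exact by \cref{prp:ex seq with flat right term}, since $D^{\vee}$ is flat.
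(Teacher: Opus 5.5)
Your proposal is correct and is essentially the paper's proof. The paper notes that $\catA_{\noeth}$ is symmetric monoidal closed and generated by the (finitely presented) $\calL^{\otimes}$-filtered objects, then applies \cref{prp:criterion for D-closed} inside $\catA_{\noeth}$; you re-run that argument by hand, with closure under quotients (resp.\ subobjects) in place of the cokernel (resp.\ kernel) of a presentation.

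The step you flag as the main obstacle is true, and the paper leaves it implicit as well, but your remarks do not prove it. Such sequences need not split, and an anti-equivalence of the non-abelian category of dualizable objects does not by itself preserve short exact sequences. There are two ways to close it.
\begin{enumerate}
\item[(a)] Use the isomorphism $\catA(D^{\vee},W)\iso\catA(I,D\otimes W)$, natural in $D$, which holds because $-\otimes D^{\vee}$ is also left adjoint to $-\otimes D$. Combine it with injectivity of $D\otimes W\to D'\otimes W$, which follows from \cref{prp:ex seq with flat right term}. This shows $D'^{\vee}\to D^{\vee}$ is an epimorphism.
\item[(b)] Bypass the point entirely. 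For a filtration $0=E_0\subseteq\cdots\subseteq E_m=E$ with $E_k/E_{k-1}\iso L_k$, the left exact sequence $0\to[L_k,F]\to[E_k,F]\to[E_{k-1},F]$ already gives $[E_k,F]\in\catF$ by induction. The image of the last map is a subobject of an object of $\catF$, so closure under subobjects and extensions suffices.
\end{enumerate}
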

\begin{proof}
Since $\catA$ satisfies (C1) and (C2), the noetherian abelian category $\catA_{\noeth}$ is also a symmetric monoidal closed category by \cref{prp:noeth monoidal closed}.
Then $\catA_{\noeth}$ is also generated by a small set of $\calL^{\otimes}$-filtered objects
as $\calL^{\otimes}$-filtered objects are finitely presented.
Thus, the assertion follows from \cref{prp:criterion for D-closed}.
\end{proof}

\subsection{Every tensor torsion class is a Serre subcategory}\label{ss:L-tors=L-Serre}
In this subsection,
we prove the title of this subsection.
\begin{prp}\label{prp:Serre=tors}
Suppose that $\catA$ satisfies (C1), (C2), $\Loc_{\otimes} \catA = \Loc_G \catA$, and $\tors_{\otimes} \catA_{\noeth} = \tors_G \catA_{\noeth}$.
For any $G$-closed subcategory $\catT$ in $\catA_{\noeth}$, the following are equivalent:
\begin{enur}
\item
$\catT$ is a torsion class.
\item
$\catT$ is a Serre subcategory.
\end{enur}
\end{prp}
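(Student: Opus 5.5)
The plan is to show that a $G$-closed torsion class $\catT$ of $\catA_{\noeth}$ coincides with $(\Supp^G)^{-1}(\Supp^G\catT)\cap\catA_{\noeth}$. The latter is closed under subobjects, quotients and extensions by \eqref{eq:SuppG AssG formula}, so this gives (i)$\imply$(ii). The converse (ii)$\imply$(i) is trivial. The inclusion $\catT\subseteq(\Supp^G)^{-1}(\Supp^G\catT)$ is tautological. So the whole work is: given $M\in\catA_{\noeth}$ with $\Supp^G M\subseteq\Supp^G\catT$, prove $M\in\catT$.

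\emph{Step 1 (reduction to a torsion-free quotient).} Since $\catA_{\noeth}$ is noetherian and $\catT$ is a torsion class, let $N\subseteq M$ be the sum of the images of all morphisms from objects of $\catT$ to $M$. By noetherianity this is a finite sum, hence a quotient of a finite direct sum of objects of $\catT$, so $N\in\catT$. Put $M':=M/N$. By extension-closedness any map from $\catT$ to $M'$ vanishes, since its image would pull back to a subobject of $M$ in $\catT$ strictly containing $N$. Thus $\catA(X,M')=0$ for all $X\in\catT$, and it suffices to show $M'=0$.

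\emph{Step 2 (using the tensor structure).} By hypothesis $\catT$ is a tensor ideal of $\catA_{\noeth}$. By \cref{prp:noeth monoidal closed}, $[X,M']\in\catA_{\noeth}$ for $X\in\catT$, so $X\otimes[X,M']\in\catT$. The evaluation morphism $X\otimes[X,M']\to M'$ is adjoint to $\id_{[X,M']}$, and it is zero by Step 1. Hence $[X,M']=0$ for every $X\in\catT$. By the isomorphism $\catA[X,M']_x\iso\catA_x[X_x,M'_x]$ proved in \S\ref{s:LSpec}, we get $\catA_x(X_x,M'_x)=0$ for every $x\in\Spec^G\catA$ and every $X\in\catT$.

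\emph{Step 3 (local contradiction).} Suppose $M'\neq0$ and pick $x\in\Ass^G M'$. Then $x\in\Supp^G M'\subseteq\Supp^G M\subseteq\Supp^G\catT$, so there is $X\in\catT$ with $X_x\neq0$. By \cref{prp:Supp^G Ass^G at x}, $x\in\Ass^G_{\catA_x}M'_x$, so \cref{prp:G-local}~(2) gives a simple subobject $S\inj M'_x$. Since $X_x$ is a nonzero noetherian object of $\catA_x$, it surjects onto a simple object. By \cref{prp:char G-local} that simple object is $g\cdot S$ for some $g\in G$. Replacing $X$ by $g^{-1}\cdot X\in\catT$, which uses $G$-closedness and the fact that localization commutes with the action, we obtain a nonzero composite $X_x\surj S\inj M'_x$. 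This contradicts Step 2, so $M'=0$ and $M\in\catT$.

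I expect the main obstacle to be Step 2, the passage from the global vanishing $\catA(\catT,M')=0$ to a local vanishing at $x$. One cannot lift local morphisms to global ones directly, and this is exactly where the internal Hom and the tensor-ideal property replace the classical lifting argument. One must also check that localization of internal Hom applies: $X$ is finitely presented, and (C1), (C2), $\Loc_\otimes\catA=\Loc_G\catA$ hold, so the corollary on $\catA[M,N]_x$ is available.
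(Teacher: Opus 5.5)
Your proof is correct and follows essentially the same route as the paper. Both arguments work with the torsion pair $(\catT,\catT^{\perp})$ in $\catA_{\noeth}$ and derive $[\catT,\catT^{\perp}]=0$ from the tensor-ideal property; your evaluation-map argument is exactly \cref{prp:tensor tor pair}. Both then localize the internal Hom at a point $x\in\Ass^G$ of a torsion-free object, and use $G$-locality of $\catA_x$ to show that $x\notin\Supp^G\catT$.

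The differences are only in presentation. The paper phrases the argument as the inclusion $\catT^{\perp}\subseteq(\Ass^G)^{-1}(\Spec^G\catA\setminus\Supp^G\catT)$. On the local side it uses a monoform $H$ with $H_x$ simple (\cref{prp:monoform sub with Tx simple}) together with \cref{prp:Nakayama}, whereas you use a simple subobject of $M'_x$ and a simple quotient of $X_x$.
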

\begin{proof}
We only prove (i)$\imply$(ii) as the converse is clear. 
Let $\catT$ be a $G$-closed torsion class and put $\Phi:=\Supp^G\catT$.
Put $\catX_{\Phi}:=(\Supp^G)^{-1}_{\catA_{\noeth}}(\Phi)$
and $\catY_{\Phi}:=(\Ass^G)^{-1}_{\catA_{\noeth}}(\Spec^G \catA_{\noeth}\setminus \Phi)$.
Then $(\catX_{\Phi},\catY_{\Phi})$ is a $G$-closed hereditary torsion pair of $\catA_{\noeth}$
by \cref{prp:tensor torsion pair associated to subset}.
We want to show that $\catT = \catX_{\Phi}$.
It is clear that $\catT \subseteq \catX_{\Phi}$.
Put $\catF:={\catT}^{\perp}$.
It is enough to show that $\catF \subseteq \catY_{\Phi}$,
that is, $\Supp^G\catT \cap \Ass^G \catF$ is empty.
Take $x\in \Ass^G \catF$.
There is a monoform object $H \in \catF$ such that $\wti{H}=x$ and $H_x$ is simple in $\catA_x$ by \cref{prp:monoform sub with Tx simple}.
For any $T\in \catT$, we have the following as $\catA[\catT,\catF]=0$ by \cref{prp:tensor tor pair}:
\[
\catA_{x}[T_{x},H_{x}]\iso \catA[T,H]_{x}=0.
\]
Replacing $H$ with $g\cdot H$ for any $g\in G$,
we can conclude that $\catA_x(T_x,S)=0$
for any simple object $S$ of $\catA_x$ since $\catA_x$ is $G$-local
(see \cref{prp:char G-local}).
This implies $T_x=0$ for any $T\in\catT$ by \cref{prp:Nakayama},
and hence $x \not\in \Supp^G\catT$.
%
\end{proof}

\begin{cor}\label{prp:Serre=CE}
Suppose that $\catA$ satisfies (C1), (C2), $\Loc_{\otimes} \catA = \Loc_G \catA$, and $\tors_{\otimes} \catA_{\noeth} = \tors_G \catA_{\noeth}$.
Let $\catX$ be a $G$-closed subcategory in $\catA_{\noeth}$.
\begin{enua}
\item
The following are equivalent in $\catA_{\noeth}$:
\begin{itemize}
\item $\catX$ is a Serre subcategory.
\item $\catX$ is a torsion class.
\item $\catX$ is a wide subcategory.
\item $\catX$ is an ICE-closed subcategory.
\item $\catX$ is a CE-closed subcategory.
\end{itemize}
\item
The following are equivalent in $\catA_{\noeth}$:
\begin{itemize}
\item $\catX$ is a torsion-free class.
\item $\catX$ is an IKE-closed subcategory.
\item $\catX$ is an IE-closed subcategory.
\end{itemize}
\end{enua}
\end{cor}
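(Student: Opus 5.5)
The plan is to reduce everything to \cref{prp:Serre=tors}. For each of the weaker closure conditions, I would produce a $G$-closed torsion class $\catT$ naturally attached to $\catX$, apply \cref{prp:Serre=tors} to conclude that $\catT$ is Serre, and then recover $\catX$ from $\catT$. In (1) the implications Serre $\imply$ torsion class $\imply$ ICE-closed $\imply$ CE-closed and Serre $\imply$ wide $\imply$ CE-closed are formal. In (2), torsion-free $\imply$ IKE-closed $\imply$ IE-closed is formal. So the work is to prove that CE-closed implies Serre, and that IE-closed implies torsion-free.

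For (1), let $\catX$ be a $G$-closed CE-closed subcategory of $\catA_{\noeth}$.
\begin{enur}
\item First I would show that $\catX$ is a tensor ideal of $\catA_{\noeth}$. Every noetherian $M$ has a presentation $E_1\to E_0\to M\to 0$ with $E_i$ finite direct sums of $\calL^{\otimes}$-filtered objects. For $X\in\catX$, \cref{prp:ex seq with flat right term} together with $G$-closedness and extension-closedness gives $X\otimes E_i\in\catX$. Since $X\otimes M=\Cok(X\otimes E_1\to X\otimes E_0)$, we get $X\otimes M\in\catX$.
\item Next, let $\catT$ be the smallest torsion class of $\catA_{\noeth}$ containing $\catX$, i.e.\ $\Filt$ of quotients of objects of $\catX$. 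It is $G$-closed, so by \cref{prp:Serre=tors} it is the Serre subcategory $(\Supp^G)^{-1}(\Phi)$ with $\Phi=\Supp^G\catX$.
\item It then remains to show $\catT\subseteq\catX$. Because $\catX$ is extension-closed, it suffices to show that $\catX$ is closed under quotients.
\end{enur}
For the last point, given a surjection $X\surj Y$ with kernel $K$, I would try to exhibit $Y$ as a cokernel of a morphism between objects of $\catX$. The natural candidates are objects of the form $X\otimes M$, which lie in $\catX$ by step (i). Concretely, I would try to realize $K\to X$ as the image of some $X\otimes M\to X\otimes I=X$, with $M$ built from $[X,-]$ and the adjunction. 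If this global construction fails, the fallback is to localize: argue pointwise at $x\in\Phi$, where $\catA_x$ is $G$-local and $\catX_x$ contains an object surjecting onto a simple $S$ up to $G$-twist. This would give simples, and then finite-length objects via \cref{prp:length via Supp^G}, and finally lift back.

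For (2), let $\catX$ be a $G$-closed IE-closed subcategory. I would follow Enomoto's description of IE-closed subcategories as $\catX=\catT\cap\catF$, where $\catT$ and $\catF$ are the smallest torsion class and the smallest torsion-free class containing $\catX$ (valid in a noetherian abelian category). Both $\catT$ and $\catF$ are $G$-closed, since $G$ acts by exact autoequivalences. By \cref{prp:Serre=tors}, $\catT$ is a Serre subcategory. The intersection of a Serre subcategory with a torsion-free class is closed under subobjects and extensions, so $\catX$ is a torsion-free class.

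The main obstacle is the step in (1) showing that a CE-closed tensor ideal is closed under quotients. The same issue arises in (2) if the decomposition $\catX=\catT\cap\catF$ has to be justified in this generality: there one needs $\catT\cap\catF\subseteq\catX$, which again requires producing objects from cokernels or images inside $\catX$.
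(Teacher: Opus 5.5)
Your reduction is correct. The formal implications leave only CE-closed $\Rightarrow$ Serre and IE-closed $\Rightarrow$ torsion-free. The torsion closure $\catT=\Filt(\mathsf{Fac}\,\catX)$ of $\catX$ in $\catA_{\noeth}$ is $G$-closed, hence Serre by \cref{prp:Serre=tors}; this is also the paper's starting point. However, the proposal stops exactly where the content lies.

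In (1) you never show that $\catX$ is closed under quotients. You do not say how to choose $M$ so that a given $K\inj X$ is the image of a morphism $X\otimes M\to X$. The localization fallback (simples in $\catA_x$, then finite length, then ``lift back'') is only a sketch. Step (i) also uses a generating set of $\calL^{\otimes}$-filtered objects, which is not among the corollary's hypotheses; only (C1), (C2), $\Loc_{\otimes}\catA=\Loc_G\catA$ and $\tors_{\otimes}\catA_{\noeth}=\tors_G\catA_{\noeth}$ are assumed.

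In (2), the decomposition $\catX=\catT\cap\catF$ is neither proved nor cited in this generality, and here it is not a harmless input. Once $\catT$ is Serre, $\catT\cap\catF$ is a torsion-free class containing $\catX$, so it equals $\catF$. The decomposition is therefore literally equivalent to the conclusion $\catX=\catF$.

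The missing idea is a short maximality argument in the noetherian category $\catA_{\noeth}$. It needs only two facts:
\begin{itemize}
\item every subobject of an object of $\catX$ lies in $\catT$, because $\catT$ is Serre;
\item every nonzero object of $\catT=\Filt(\mathsf{Fac}\,\catX)$ receives a nonzero morphism from an object of $\catX$ (take the first nonzero step of its filtration).
\end{itemize}

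For (1), let $X\in\catX$ and $K\subseteq X$. Choose $K_0\subseteq K$ maximal with $X/K_0\in\catX$; this is possible since $K$ is noetherian and $K_0=0$ qualifies. If $K_0\neq K$, then $K/K_0$ is a nonzero object of $\catT$, so there is a nonzero $f\colon X'\to K/K_0\subseteq X/K_0$ with $X'\in\catX$. Then $\Cok f=X/K_1\in\catX$ with $K_0\subsetneq K_1\subseteq K$, a contradiction. Thus $\catX$ is a $G$-closed torsion class, hence Serre.

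For (2), let $X\in\catX$ and $Y\subseteq X$. Let $Y_0$ be maximal among images of morphisms $X''\to Y$ with $X''\in\catX$. Composing with $Y\inj X$ and using closure under images gives $Y_0\in\catX$. If $Y_0\neq Y$, pull back $0\to Y_0\to Y\to Y/Y_0\to 0$ along a nonzero $X'\to Y/Y_0$ with $X'\in\catX$. The pullback $E$ is an extension of $X'$ by $Y_0$, so $E\in\catX$, and the image of $E\to Y$ strictly contains $Y_0$, a contradiction. Hence $\catX$ is closed under subobjects.

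This is the kind of formal argument the paper invokes via \cite[Proposition 5.1]{KS}, after noting that the torsion closure is $G$-closed. No tensor-ideal property or localization at points is needed beyond \cref{prp:Serre=tors}.
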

\begin{proof}
The smallest torsion(-free) class containing $\catX$ is called the torsion(-free) closure of $\catX$.
We can see that the torsion(-free) closure of $\catX$ is also $G$-closed by the construction
(cf.\ \cite[Proposition 2.3 and Lemma 3.22]{Sai-coh}).
We can prove the assertion by an argument similar to \cite[Proposition 5.1]{KS}.
\end{proof}

\subsection{Classifying $G$-closed torsion-free classes}\label{ss:L-torf}
In this subsection, we classify the $G$-closed torsion-free classes of $\catA_{\noeth}$ in terms of the subsets of $\Spec^G\catA$.

To prove the following lemma, we recall the construction of natural morphisms in a symmetric monoidal closed category $\catB$.
Let $X$ and $Y$ be objects of $\catB$.
Since the functor $[X,-]$ is a right adjoint of $-\otimes X$,
we obtain the \emph{evaluation morphism} $[X,Y]\otimes X \to Y$ corresponding to $\id_{[X,Y]}$ under the adjunction.
Applying the adjunction once again to the composite $X\otimes [X,Y] \iso [X,Y]\otimes X \to Y$,
we obtain a natural map $X \to [[X,Y],Y]$.
Moreover, if the underlying category of $\catB$ is preadditive,
the natural map $X \to [[X,Y],Y]$ is nonzero if and only if $[X,Y]$ is nonzero.
\begin{lem}\label{prp:Ass^G X controlls X}
Suppose that $\catA$ satisfies (C1), (C2), $\Loc_{\otimes}\catA = \Loc_G \catA$, and $\torf_{\Hom} \catA_{\noeth} = \torf_G \catA_{\noeth}$.
In addition, suppose that the following condition holds (cf.\ \cref{prp:equality for Ass M_x}):
\begin{description}
\item[($*$)]
For any $M\in \catA$ and $x\in \Supp^G M$, there exists $y\in \Ass^G M$ such that $y \in \ol{\{x\}}$.
\end{description} 
Let $\catX$ be a $G$-closed torsion-free class of $\catA_{\noeth}$ and $x\in \Ass^G \catX$.
For any object $M \in \catA_{\noeth}$,
if $\Ass^G(M)=\{x\}$, then $M\in \catX$.
\end{lem}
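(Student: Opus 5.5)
The plan is to exploit the Hom-ideal property of $\catX$ together with the local category $\catA_x$, and to induct on the length of $M_x$.

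\textbf{Reductions.} Since $\catX$ is a $G$-closed torsion-free class, the hypothesis $\torf_{\Hom}\catA_{\noeth}=\torf_G\catA_{\noeth}$ shows that $\catX$ is a Hom ideal in $\catA_{\noeth}$. Thus $[N,X]\in\catX$ for all $N\in\catA_{\noeth}$ and $X\in\catX$.

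Since $x\in\Ass^G\catX$, there is $X_0\in\catX$ with $x\in\Ass^G X_0$. By \cref{prp:monoform sub with Tx simple}, $X_0$ has a monoform subobject $T$ with $\wti{T}=x$ and $T_x$ simple in $\catA_x$. Then $T\in\catX$, since $\catX$ is closed under subobjects, and $g\cdot T\in\catX$ for all $g\in G$.

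Next, let $M\in\catA_{\noeth}$ with $\Ass^G M=\{x\}$. I claim $\Supp^G_{\catA_x} M_x=\{x\}$. For any $z\in\Supp^G M$, condition ($*$) gives $y\in\Ass^G M=\{x\}$ with $y\in\ol{\{z\}}$. Hence $x\in\ol{\{z\}}$, so $\Supp^G M\cap\ol{\{x\}}=\{x\}$, and \cref{prp:Supp^G Ass^G at x} gives the claim. By \cref{prp:length via Supp^G}, $M_x$ therefore has finite length in $\catA_x$. I will prove $M\in\catX$ by induction on $\ell(M_x)$.

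\textbf{Base case.} If $\ell(M_x)=0$, then $x\notin\Supp^G M\supseteq\Ass^G M$. This forces $\Ass^G M=\emptyset$, so $M=0\in\catX$.

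\textbf{Inductive step.} Suppose $M_x\neq0$. Since $M_x$ is noetherian, it has a simple quotient $M_x\surj S'$. By \cref{prp:char G-local} applied to the $G$-local category $\catA_x$, we have $S'\iso g\cdot T_x=(g\cdot T)_x$ for some $g\in G$. Hence $\catA_x[M_x,(g\cdot T)_x]\neq0$.

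Put $T':=g\cdot T$ and $Y:=[[M,T'],T']$. Then $Y\in\catX$ by the Hom-ideal property, used twice. Consider the natural map $\phi\colon M\to Y$. By the corollary comparing internal Homs with localization (applicable since $M$ and $[M,T']$ are finitely presented), $\phi_x$ is the natural map $M_x\to[[M_x,T'_x],T'_x]$. This map is nonzero because $[M_x,T'_x]\neq0$, by the remark preceding the lemma.

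Let $K:=\Ker\phi$. Then $\Ima\phi\in\catX$, as a subobject of $Y$. Moreover $\ell(K_x)<\ell(M_x)$, since $(\Ima\phi)_x=\Ima(\phi_x)\neq0$. Also $\Ass^G K\subseteq\Ass^G M=\{x\}$.

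If $K=0$, then $M\iso\Ima\phi\in\catX$. Otherwise $\Ass^G K=\{x\}$, and the induction hypothesis gives $K\in\catX$. The exact sequence $0\to K\to M\to\Ima\phi\to0$ and extension-closedness of $\catX$ then give $M\in\catX$.

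\textbf{Main obstacle.} The key step is ensuring that $\phi_x$ is nonzero, i.e.\ that some translate $g\cdot T$ detects the top of $M_x$. This uses three ingredients: $G$-locality of $\catA_x$, the compatibility of internal Hom with localization at $x$, and condition ($*$), which is what makes $M_x$ of finite length so that the induction runs.
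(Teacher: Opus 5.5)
Your proof is correct and follows the paper's argument essentially step for step. You induct on $\ell(M_x)$, map $M$ into the Hom-ideal object $[[M,T],T]\in\catX$ via the natural map, with a translate of a monoform $T\in\catX$ (having $T_x$ simple) detecting a simple quotient of $M_x$, and you conclude by extension-closedness. Your derivation of $\Supp^G_{\catA_x}M_x=\{x\}$ directly from ($*$), which tacitly uses that $\Spec^G\catA$ is $T_0$, is slightly more direct than the paper's passage through $\Ass^G_{\catA_x}M_x=\{x\}$.
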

\begin{proof}
Let $M$ be an object such that $\Ass^G(M)=\{x\}$.
By the condition ($*$) and \cref{prp:equality for Ass M_x}, we have $\Ass_{\catA_x}^G M_x = \Ass^G M\cap \ol{\{x\}}=\{x\}$.
Thus, $M_x$ is of finite length by \cref{prp:length via Supp^G}.
We prove $M\in \catX$ by induction on the length $\ell_{\catA_x}(M_x)$ of $M_x$.
Take a simple quotient $M_x \surj S$.
As $x\in \Ass^G(\catX)$ and $\catX$ is $G$-closed,
there exists a monoform object $T \in \catX$ such that $T_x\iso S$ by \cref{prp:char G-local,prp:monoform sub with Tx simple}.
Consider the natural morphism $\phi\colon M \to [[M,T],T]$.
Then $[[M,T],T] \in \catX$ as $\catX$ is a Hom ideal, and $\phi_x\colon M_x \to [[M_x,S],S]$ is nonzero since there is the surjection $M_x \surj S$.
Put $K:=\Ker(\phi)$.
Then $\ell_{\catA_x}(K_x) < \ell_{\catA_x}(M_x)$,
and $\Ass^G K \subseteq \Ass^G M=\{x\}$.
If $\ell_{\catA_x}(M_x)=1$, then $K_x=0$, and hence $K=0$.
This means that $\phi$ is injective, and $M \in \catX$.
If $\ell_{\catA_x}(M_x)\ge2$, then $K\in \catX$ by the induction hypothesis.
Then $M \in \catX$ by the exact sequence $0 \to K \to M \to [[M,T],T]$.
\end{proof}

\begin{thm}\label{prp:classify G-torf}
Suppose that $\catA$ satisfies (C1), (C2), $\Loc_{\otimes}\catA = \Loc_G \catA$, and $\torf_{\Hom} \catA_{\noeth} = \torf_G \catA_{\noeth}$.
In addition, suppose that $\catA$ satisfies ($*$) in \cref{prp:Ass^G X controlls X}.
Then there is an order-preserving bijection between the following sets:
\begin{enur}
\item
the set of $G$-closed torsion-free classes of $\catA_{\noeth}$.
\item
the power set of $\Spec^G \catA$.
\end{enur}
The bijection is given by $\Phi \mapsto (\Ass^{G})^{-1}(\Phi)$ and $\catX \mapsto \Ass^G\catX$,
where $\Phi$ is a subset of $\Spec^G \catA$ and $\catX$ is a $G$-closed torsion-free class.
\end{thm}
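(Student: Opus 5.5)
We check that both maps are well defined and then verify the two round-trip compositions. For a subset $\Phi$ of $\Spec^G\catA$, the subcategory $(\Ass^G)^{-1}(\Phi)\cap\catA_{\noeth}$ is a $G$-closed torsion-free class. Indeed, it equals $\Ass^{-1}(\pi_G^{-1}(\Phi))\cap\catA_{\noeth}$, which is closed under subobjects and extensions by \eqref{eq:SuppG AssG formula}, and it is $G$-closed because $\Ass^G(g\cdot M)=\Ass^G M$. Both assignments are clearly order-preserving.

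\textbf{Step 1: $\catX=(\Ass^G)^{-1}(\Ass^G\catX)$.} The inclusion $\subseteq$ is trivial. For the converse, take $M\in\catA_{\noeth}$ with $\Ass^G M\subseteq\Ass^G\catX$. We argue by noetherian induction, or equivalently by induction on the finite cardinality of $\Ass^G M$.
\begin{itemize}
\item If $M=0$, there is nothing to prove.
\item If $\Ass^G M=\{x\}$ is a singleton, then $M\in\catX$ directly by \cref{prp:Ass^G X controlls X}.
\item If $|\Ass^G M|\ge 2$, split $\Ass^G M=\{x\}\sqcup\Psi$ with $\Psi$ nonempty. Apply \cref{prp:ass decomp} to get an exact sequence $0\to L\to M\to N\to 0$ with $\Ass^G L=\{x\}$ and $\Ass^G N=\Psi$.
\end{itemize}
In the last case, $L\in\catX$ by \cref{prp:Ass^G X controlls X}. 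Also $N\in\catX$ by the induction hypothesis, since $\Psi\subseteq\Ass^G\catX$ has smaller cardinality. Since $\catX$ is extension-closed, $M\in\catX$.

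\textbf{Step 2: $\Ass^G\big((\Ass^G)^{-1}(\Phi)\big)=\Phi$ for an arbitrary subset $\Phi$.} The inclusion $\subseteq$ is immediate. For $\supseteq$, given $x\in\Phi$ we must find a noetherian object $M$ with $\Ass^G M=\{x\}$. Write $x=\wti{\alpha}$ with $\alpha\in\Spec\catA=\Spec\catA_{\noeth}$. Choose a monoform object $H$ with $\ol{H}=\alpha$; we may take $H$ noetherian by replacing it with a nonzero noetherian subobject, which is still monoform and atom-equivalent.

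It then suffices to show that a monoform object has exactly one associated atom. Every nonzero subobject $H'$ of $H$ is atom-equivalent to $H$, since $H'$ and $H$ share the nonzero common subobject $H'$. Hence $\Ass H=\{\ol{H}\}$, and therefore $\Ass^G H=\{x\}\subseteq\Phi$. This gives $H\in(\Ass^G)^{-1}(\Phi)$ and $x\in\Ass^G$ of that class.

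The main obstacle is concentrated in \cref{prp:Ass^G X controlls X}, which is already available. The only delicate points left are the correct use of \cref{prp:ass decomp}, namely that it yields precisely $\Ass^G L=\{x\}$, and the fact that $\Ass^G M$ is finite for noetherian $M$, which makes the induction well-founded.
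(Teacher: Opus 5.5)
Your proposal is correct and follows the paper's proof. Step 2 is the paper's observation that $\Ass^G H=\{\wti{H}\}$ for a monoform $H$; you add the care of choosing $H$ noetherian so that it lies in $\catA_{\noeth}$. Step 1 is the paper's reduction via \cref{prp:ass decomp} to the singleton case, which is then handled by \cref{prp:Ass^G X controlls X}; you simply write this reduction out as an explicit induction on $|\Ass^G M|$.
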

\begin{proof}
We first prove that $\Ass^G((\Ass^G)^{-1}(\Phi))=\Phi$ for any subset $\Phi$ of $\Spec^G\catA$.
The inclusion $\Ass^G((\Ass^G)^{-1}(\Phi))\subseteq \Phi$ is clear.
For any monoform object $H$, we have $\Ass^G(H)=\{\wti{H}\}$ by definition.
Thus, the converse inclusion holds.

Next, we prove that $(\Ass^G)^{-1}(\Ass^G \catX)=\catX$ for any $G$-closed torsion-free class $\catX$ of $\catA_{\noeth}$.
It is clear that $(\Ass^G)^{-1}(\Ass^G \catX) \supseteq \catX$ holds.
Let $M$ be an object such that $\Ass^G M \subseteq \Ass^G\catX$.
To prove $M \in \catX$, we may assume that $\Ass^G M$ is a singleton by \cref{prp:ass decomp}.
Then $M\in \catX$ follows from \cref{prp:Ass^G X controlls X}.
\end{proof}

\section{Examples}\label{s:ex}
In this section, we describe the orbit atom spectrum for several concrete Grothendieck cosmoi satisfying \cref{setup:classify subcat} and discuss consequences of \cref{prp:Serre=CE,prp:classify G-torf}.
In \S \ref{ss:non ex}, we give examples of Grothendieck cosmoi that lie outside the scope of \cref{setup:classify subcat} but nevertheless have the property that tensor torsion classes are Serre subcategories.
\subsection{The category of modules}\label{ss:Mod}
Let $A$ be a ring.
The category $\Mod A$ of (right) $A$-modules is a Grothendieck category with a generator $A$.
The Grothendieck category $\Mod A$ is locally noetherian if and only if $A$ is right noetherian.
Then $(\Mod A)_{\noeth}$ is nothing but the category $\catmod A$ of finitely generated $A$-modules.

If $A$ is commutative, then $\Mod A$ becomes a symmetric monoidal closed category with the usual tensor product over $A$, and hence, $\Mod A$ is a Grothendieck cosmos.
The dualizable objects in $\Mod A$ are precisely the finitely generated projective modules.
Indeed, if $D$ is a dualizable object of $\Mod A$,
then the functor $\Hom_A(D,-)\iso -\otimes D^{\vee}$ preserves surjectivity and directed colimits.
This means that $D$ is projective and finitely presented.
The converse direction is clear.
Thus, the Grothendieck cosmos $\Mod A$ always satisfies (C1) and (C2).

Let $R$ be a commutative noetherian ring.
Then $\Mod R$ satisfies the assumptions in \cref{setup:classify subcat},
and $G_\calL$ in \cref{setup:classify subcat} is the trivial group, where $\calL=\{R\}$.
By \cite[Proposition 7.2]{Kan-Serre}, there is a bijection 
\[
\Spec R \isoto \Spec(\Mod R),\quad \pp \mapsto \ol{R/\pp},
\]
where $\Spec R$ is the set of prime ideals of $R$.
Thus, $\Mod R$ satisfies the condition ($*$) of \cref{prp:Ass^G X controlls X}.
Therefore, we can recover the following known results from \cref{prp:Serre=CE,prp:classify G-torf}.
\begin{cor}[{\cite{Takahashi,SW,IMST,Eno}}]\label{prp:tors=Serre in mod R}
Let $R$ be a commutative noetherian ring,
and let $\catX$ be a subcategory of $\catmod R$.
\begin{enua}
\item
The following are equivalent:
\begin{itemize}
\item $\catX$ is a Serre subcategory.
\item $\catX$ is a torsion class.
\item $\catX$ is a wide subcategory.
\item $\catX$ is an ICE-closed subcategory.
\item $\catX$ is a CE-closed subcategory.
\end{itemize}
\item
The following are equivalent:
\begin{itemize}
\item $\catX$ is a torsion-free class.
\item $\catX$ is an IKE-closed subcategory.
\item $\catX$ is an IE-closed subcategory.
\end{itemize}
\end{enua}
\end{cor}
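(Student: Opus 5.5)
The plan is to deduce this corollary directly from \cref{prp:Serre=CE}, applied to $\catA=\Mod R$ with $G=G_{\calL}$ trivial ($\calL=\{R\}$). The work is to check the hypotheses of \cref{prp:Serre=CE}, and to see that the $G$-closedness condition is vacuous.

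First, the hypotheses. The unit $R$ is finitely presented, and $\Mod R$ is generated by the single object $R$, which is trivially $\calL^{\otimes}$-filtered. So $\Mod R$ is a locally noetherian Grothendieck cosmos satisfying \cref{setup:classify subcat}, as already noted above. In particular (C1) and (C2) hold. By the unnumbered lemma at the beginning of \S\ref{s:classify subcat}, we also have $\Loc_{\otimes}\catA=\Loc_G\catA$ and $\tors_{\otimes}\catA_{\noeth}=\tors_G\catA_{\noeth}$. Here $\catA_{\noeth}=\catmod R$.

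Second, the group condition. Since $G$ is trivial, every subcategory $\catX$ of $\catmod R$ is $G$-closed. Closure under isomorphisms is built into the conventions, and the action of the identity is the identity functor. So \cref{prp:Serre=CE}(1) and (2), applied to an arbitrary subcategory $\catX$, give exactly the two lists of equivalences in the statement.

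The only step that needs care is the first, namely that \cref{prp:criterion for D-closed} applies, so that the tensor-ideal conditions reduce to $G$-closedness. Here this is immediate: for CE-closed subcategories closed under coproducts, condition (iv) of that lemma amounts to $X\otimes_R R\iso X$. No condition ($*$) is needed, because the corollary concerns only \cref{prp:Serre=CE}. If we also wanted the Takahashi classification, we would use Kanda's identification $\Spec R\iso\Spec(\Mod R)$ to verify ($*$) and then invoke \cref{prp:classify G-torf}.
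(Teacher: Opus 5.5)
Your proposal is correct and follows the paper's own route. The paper likewise observes that $\Mod R$ satisfies \cref{setup:classify subcat} with $\calL=\{R\}$ and trivial $G_{\calL}$, so that every subcategory of $\catmod R$ is $G$-closed, and then reads the corollary off from \cref{prp:Serre=CE}. You are also right that condition ($*$), which the paper checks via Kanda's bijection $\Spec R\isoto\Spec(\Mod R)$, is needed only for the companion classification of torsion-free classes through \cref{prp:classify G-torf}, not for this statement.
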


\begin{cor}[{\cite{Takahashi}}]
Let $R$ be a commutative noetherian ring.
There is an order-preserving bijection between the following sets:
\begin{itemize}
\item
The set of torsion-free classes of $\catmod R$.
\item
The power set of $\Spec R$.
\end{itemize}
\end{cor}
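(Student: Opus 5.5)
This is an application of \cref{prp:classify G-torf} with $\catA=\Mod R$, $\calL=\{R\}$ and $G=G_{\calL}$ trivial. The plan is to check each hypothesis of \cref{prp:classify G-torf} in this setting and then identify $\Spec^G(\Mod R)=\Spec(\Mod R)$ with $\Spec R$.

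First, the hypotheses. As noted before \cref{prp:tors=Serre in mod R}, $\Mod R$ satisfies \cref{setup:classify subcat}: the unit $R$ is finitely presented, and $R$ itself is an invertible generator. Hence (C1) and (C2) hold. The lemma at the start of \S\ref{s:classify subcat} then gives $\Loc_{\otimes}\catA=\Loc_G\catA$ and $\torf_{\Hom}\catA_{\noeth}=\torf_G\catA_{\noeth}$. Since $G$ is trivial, every subcategory is $G$-closed, so set (i) in \cref{prp:classify G-torf} is simply the set of all torsion-free classes of $\catmod R$.

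Next, condition ($*$). Because $G$ is trivial, $\pi_G$ is the identity, and ($*$) reads: for every $M\in\Mod R$ and $\alpha\in\Supp M$ there is $\beta\in\Ass M$ with $\beta\in\ol{\{\alpha\}}$. Under Kanda's homeomorphism $\pp\mapsto\ol{R/\pp}$ \cite[Proposition 7.2]{Kan-Serre}, atom support and associated atoms correspond to the classical $\Supp M$ and $\Ass M$. The open basis $\{\Supp M\}$ corresponds to the specialization-closed subsets, so $\ol{\{\pp\}}$ is the set of primes contained in $\pp$.

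Thus ($*$) becomes the classical statement: if $\pp\in\Supp M$, then some associated prime of $M$ is contained in $\pp$. This is proved by localizing. Since $M_\pp\neq0$ and $R_\pp$ is noetherian, $\Ass_{R_\pp}M_\pp\neq\emptyset$. For $\Mod R$ (not necessarily finitely generated), associated primes localize: $\Ass_{R_\pp}M_\pp=\{\mathfrak q R_\pp \mid \mathfrak q\in\Ass M,\ \mathfrak q\subseteq\pp\}$, which holds for a noetherian ring.

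Applying \cref{prp:classify G-torf} now yields the bijection between torsion-free classes of $\catmod R$ and subsets of $\Spec(\Mod R)\cong\Spec R$, given by $\catX\mapsto\Ass\catX$ and $\Phi\mapsto\Ass^{-1}(\Phi)$. The main point requiring care is the translation of the topology: one must check that $\ol{\{\pp\}}$ in the atom spectrum is the set of primes contained in $\pp$, not those containing it. This follows because $\Supp(R/\mathfrak q)=V(\mathfrak q)$ is open in the atom topology.
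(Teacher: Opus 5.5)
Your proposal is correct and follows the paper's own route: apply \cref{prp:classify G-torf} to $\Mod R$ with $\calL=\{R\}$ and $G_{\calL}$ trivial, and use Kanda's identification $\Spec R\cong\Spec(\Mod R)$ to verify ($*$). Your explicit check of ($*$) simply fills in a step the paper asserts without detail. That check consists of showing that $\ol{\{\pp\}}$ is the set of primes contained in $\pp$, and that associated primes localize for arbitrary modules over a noetherian ring.
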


\subsection{The category of graded modules}
For the basics of graded ring theory,
we refer to \cite{NO}.
Throughout this section, fix an abelian group $G$.

Let $A$ be a $G$-graded ring.
The category $\Mod^G A$ of graded (right) $A$-modules is a Grothendieck category with a generating set $\{A(g)\}_{g\in G}$,
where $(g)$ is the grading shift functor $\Mod^G A \isoto \Mod^G A$.
The Grothendieck category $\Mod^G A$ is locally noetherian
if and only if $A$ is \emph{$G$-noetherian}, that is, every ascending chain of homogeneous ideals is stationary.
Then $(\Mod^G A)_{\noeth}$ is nothing but the category $\catmod^G A$ of finitely generated graded $A$-modules.

If $A$ is commutative, that is, its underlying ring is commutative,
then $\Mod^G A$ becomes a symmetric monoidal closed category with the usual graded tensor product over $A$, and hence, $\Mod^G A$ is a Grothendieck cosmos.
The dualizable objects in $\Mod^G A$ are precisely the finitely generated projective graded modules, that is, the direct summands of finite direct sums of $\{A(g)\}_{g\in G}$, by an argument similar to that in \S \ref{ss:Mod}.
Thus, the Grothendieck cosmos $\Mod^G A$ always satisfies the conditions (C1) and (C2).

Let $R$ be a commutative $G$-noetherian $G$-graded ring
and put $\calL=\{R(g)\}_{g\in G}$.
Then $\Mod^G R$ satisfies the assumptions in \cref{setup:classify subcat}.
The grading group $G$ acts on $\Mod^G R$ by grading shifts.
This action coincides with an action given 
by the natural surjection $G \surj G_{\calL}$ to the group defined in \cref{setup:classify subcat}.
A $G$-closed subcategory is precisely a subcategory closed under grading shifts,
that is, if $X$ belongs to the subcategory, then so does $X(g)$ for any $g\in G$.
By \cite[Corollary 3.4]{Posur}, there is a surjection
\[
\Spec^G R \times G \surj \Spec(\Mod^G R),\quad (\pp,g) \mapsto \ol{(R/\pp)(g)},
\]
where $\Spec^G R$ is the set of homogeneous prime ideals of $R$.
From this, we obtain the following bijection by \cite[Lemma 3.6]{Posur}:
\[
\Spec^G R \isoto \Spec^G(\Mod^G R),\quad \pp \mapsto \wti{R/\pp}.
\]
Note that the \emph{atom projection} $\Spec(\Mod^G R) \to \Spec^G R$ in \cite{Posur} is nothing but the natural quotient map $\Spec(\Mod^G R) \to \Spec^G(\Mod^G R)$.
From \cite[Theorem and Definition 3.8 (2)]{Posur},
we can easily see that $\Mod^G R$ satisfies the condition ($*$) of \cref{prp:Ass^G X controlls X}.
Therefore, we obtain the following results, which seem to be well known to experts.
\begin{cor}
Let $R$ be a commutative $G$-noetherian $G$-graded ring,
and let $\catX$ be a subcategory of $\catmod^G R$ closed under grading shifts.
\begin{enua}
\item
The following are equivalent:
\begin{itemize}
\item $\catX$ is a Serre subcategory.
\item $\catX$ is a torsion class.
\item $\catX$ is a wide subcategory.
\item $\catX$ is an ICE-closed subcategory.
\item $\catX$ is a CE-closed subcategory.
\end{itemize}
\item
The following are equivalent:
\begin{itemize}
\item $\catX$ is a torsion-free class.
\item $\catX$ is an IKE-closed subcategory.
\item $\catX$ is an IE-closed subcategory.
\end{itemize}
\end{enua}
\end{cor}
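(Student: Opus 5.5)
This corollary should follow by specializing \cref{prp:Serre=CE} to the Grothendieck cosmos $\catA=\Mod^G R$ with the grading-shift action. The work lies in checking the hypotheses and matching the notions of closure.

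First, $\Mod^G R$ is a locally noetherian Grothendieck cosmos, since $R$ is $G$-noetherian. It satisfies \cref{setup:classify subcat} with $\calL=\{R(g)\}_{g\in G}$, because:
\begin{itemize}
\item the unit $R$ is finitely presented;
\item each $R(g)$ is invertible, as $R(g)\otimes_R R(h)\iso R(g+h)$;
\item the set $\{R(g)\}_{g\in G}$ generates $\Mod^G R$.
\end{itemize}
The lemma at the beginning of \S\ref{s:classify subcat} then gives (C1), (C2), $\Loc_{\otimes}\catA=\Loc_{G_\calL}\catA$ and $\tors_{\otimes}\catA_{\noeth}=\tors_{G_\calL}\catA_{\noeth}$. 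Hence \cref{prp:Serre=CE} applies with $G_\calL$ acting on $\catA_{\noeth}=\catmod^G R$.

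Next, I identify $G_\calL$-closed subcategories with subcategories closed under grading shifts. The action of $g\in G$ is $X\mapsto X(g)\iso X\otimes_R R(g)$, and it factors through the surjection $G\surj G_\calL$, $g\mapsto [R(g)]$. So the orbit of $X$ under $G_\calL$ is exactly $\{X(g)\}_{g\in G}$. Consequently a subcategory is $G_\calL$-closed if and only if it is closed under all shifts $(g)$, and the stated equivalences follow directly from \cref{prp:Serre=CE}.

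The only step needing care is the identification $X\otimes_R R(g)\iso X(g)$, which is the standard compatibility of the graded tensor product with shifts. Everything else is a direct invocation of earlier results, so I expect no real obstacle. Condition ($*$) is not needed here; it is only required for the classification of torsion-free classes.
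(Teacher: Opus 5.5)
Your proposal is correct and follows the paper's own argument. You check \cref{setup:classify subcat} for $\Mod^G R$ with $\calL=\{R(g)\}_{g\in G}$, observe that the shift action factors through $G\surj G_{\calL}$ so that $G_{\calL}$-closed means closed under grading shifts, and then apply \cref{prp:Serre=CE} via the lemma at the start of \S\ref{s:classify subcat}; note that (C1) and (C2) come directly from the setup, while that lemma supplies $\Loc_{\otimes}\catA=\Loc_{G_\calL}\catA$ and $\tors_{\otimes}\catA_{\noeth}=\tors_{G_\calL}\catA_{\noeth}$.
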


\begin{cor}
Let $R$ be a commutative $G$-noetherian $G$-graded ring.
There is an order-preserving bijection between the following sets:
\begin{itemize}
\item
The set of torsion-free classes of $\catmod^{G} R$ closed under grading shifts.
\item
The power set of $\Spec^G R$.
\end{itemize}
\end{cor}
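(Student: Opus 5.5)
The plan is to apply \cref{prp:classify G-torf} to $\catA=\Mod^G R$ with the grading-shift action of $G$, and then transport the result along the bijection $\Spec^G R \isoto \Spec^G(\Mod^G R)$, $\pp\mapsto \wti{R/\pp}$, recalled above from \cite{Posur}.

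First I check the hypotheses of \cref{prp:classify G-torf}.
\begin{itemize}
\item $\Mod^G R$ is a locally noetherian Grothendieck cosmos satisfying \cref{setup:classify subcat} with $\calL=\{R(g)\}_{g\in G}$, so (C1) and (C2) hold.
\item The group $G_\calL$ acts through the surjection $G\surj G_\calL$, so $G$-closed subcategories are the same as $G_\calL$-closed subcategories.
\item The lemma at the start of \S\ref{s:classify subcat} then gives $\Loc_\otimes=\Loc_G$ and $\torf_{\Hom}(\catmod^G R)=\torf_G(\catmod^G R)$. Both equalities are insensitive to replacing $G_\calL$ by $G$, since the orbits, the closed subcategories and $\Spec^G$ coincide.
\item Condition ($*$) was noted above to follow from \cite[Theorem and Definition 3.8 (2)]{Posur}.
\end{itemize}
Hence \cref{prp:classify G-torf} gives an order-preserving bijection between the $G$-closed torsion-free classes of $\catmod^G R$ and the subsets of $\Spec^G(\Mod^G R)$.

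Second, a $G$-closed subcategory is exactly a subcategory closed under grading shifts. Composing with the bijection $\Spec^G R\isoto \Spec^G(\Mod^G R)$ identifies power sets in an inclusion-preserving way, so the composite bijection is order-preserving. Explicitly, a set $\Phi$ of homogeneous primes corresponds to $\{M\in\catmod^G R \mid \Ass^G M\subseteq \{\wti{R/\pp}\}_{\pp\in\Phi}\}$.

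The main obstacle is verifying ($*$), and I rely on the cited result of \cite{Posur}. If it had to be checked directly, I would use the description there of $\Supp^G$ and $\Ass^G$ through the atom projection as the usual graded support and associated primes. Then ($*$) reduces to the classical fact that every homogeneous prime in the support of a finitely generated graded module contains a homogeneous associated prime. Closure in $\Spec^G R$ corresponds to containment of primes, so such an associated prime gives the required point $y\in\ol{\{x\}}$. For a non-finitely-generated $M$, I would write $M$ as the union of its finitely generated graded submodules; any homogeneous prime in the support already lies in the support of one of them, and associated primes of a submodule are associated primes of $M$.
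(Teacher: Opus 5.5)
Your proposal is correct and is essentially the paper's own argument. It checks that $\Mod^G R$ with $\calL=\{R(g)\}_{g\in G}$ satisfies \cref{setup:classify subcat}, that the shift action factors through $G\surj G_{\calL}$ (so $G$-closed means closed under grading shifts), and that ($*$) holds by \cite[Theorem and Definition 3.8 (2)]{Posur}, and then applies \cref{prp:classify G-torf} and transports along the bijection $\Spec^G R\isoto\Spec^G(\Mod^G R)$ from \cite{Posur}.
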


\subsection{The category of quasi-coherent sheaves}
See \cite{GW-I,GW-II} for the basics of scheme theory.
For the monoidal structure on the category of quasi-coherent sheaves,
we refer to \cite[Examples 2.4(b) and 6.3]{HO}, where further references can also be found.

Let $X$ be a scheme.
We denote by $\shO_X$ the structure sheaf of $X$.
For a point $x\in X$,
we denote by $\mm_x$ the maximal ideal of $\shO_{X,x}$ 
and $\kappa(x):=\shO_{X,x}/\mm_x$ the residue field of $x$.

The category $\Qcoh X$ of quasi-coherent $\shO_X$-modules is a Grothendieck cosmos
with the usual tensor product of $\shO_X$-modules.
The dualizable objects of $\Qcoh X$ are precisely the locally free $\shO_X$-modules of finite rank.
The invertible objects of $\Qcoh X$ are precisely the invertible $\shO_X$-modules.
Thus, a subcategory $\catX$ of $\Qcoh X$ is L-closed if and only if, for any $\shF\in \catX$ and any invertible $\shO_X$-module $\shL$, we have $\shF\otimes_{\shO_X} \shL \in \catX$.

Let $X$ be a noetherian scheme.
Then $\Qcoh X$ is locally noetherian, and $(\Qcoh X)_{\noeth}$ is nothing but the category $\coh X$ of coherent $\shO_X$-modules.
Thus, $\Qcoh X$ is a locally noetherian Grothendieck cosmos satisfying (C1).
By \cref{rmk:size of dualizable}, it satisfies (C2) if and only if $X$ has the \emph{resolution property}, that is, every coherent $\shO_X$-module is a quotient of a locally free $\shO_X$-module.

Consider the following conditions for a noetherian scheme $X$:
\begin{enur}
\item
$X$ is quasi-projective over a commutative noetherian ring $R$.
\item
$X$ is divisorial (see \cite{Borelli,SGA6}).
In particular, $\Qcoh X$ is generated by a set of invertible $\shO_X$-modules.
\item
$\Qcoh X$ is generated by a set of invertible-filtered objects.
\item
$X$ has the resolution property.
\end{enur}
Then (i)$\imply$(ii)$\imply$(iii)$\imply$(iv) holds.
Thus, there are sufficiently many noetherian schemes satisfying (iii).
Note that there is a proper algebraic surface satisfying (iv) but not (ii) (see \cite[Example 1.7 and Theorem 5.2]{Gross}).
The author does not know whether (ii) and (iii) are equivalent.
We also note that the resolution property ensures that $X$ has affine diagonal,
that is, the intersection of any two affine open subsets is affine (see \cite[Proposition 1.3]{Totaro}).

Let $X$ be a noetherian scheme such that $\Qcoh X$ is generated by a set of invertible-filtered objects.
Let $\calL$ be a complete set of representatives of the isomorphism classes of invertible $\shO_X$-modules.
Then $\Qcoh X$ satisfies \cref{setup:classify subcat},
and $G_{\calL}$ in \cref{setup:classify subcat} coincides with the Picard group $\Pic X:=\Pic(\Qcoh X)$.
From \cite[Theorem 7.6]{Kan-CatSp},
we have a bijection
\[
X \isoto \Spec(\Qcoh X), \quad
x \mapsto \ol{{j_x}_*\kk(x)},
\]
where $j_x$ is the natural morphism $\Spec \shO_{X,x} \to X$.
Then $\Pic X$ trivially acts on $\Spec(\Qcoh X)$
by the projection formula (cf.\ \cite[Proposition 22.81]{GW-II}).
Thus, we have $\Spec^{\Pic X}(\Qcoh X)=\Spec(\Qcoh X) \iso X$.
By \cite[Corollary 7.7, Proposition 7.12, and Lemma 8.1]{Kan-CatSp},
the condition ($*$) of \cref{prp:Ass^G X controlls X} is satisfied.
Thus, we obtain the following slight generalization of the result for divisorial noetherian schemes of \cite{Sai-coh}.
We note that the argument in \cite{Sai-coh} makes essential use of the divisorial property, and therefore the result cannot be proved at this level of generality by the same argument.

\begin{cor}
Let $X$ be a noetherian scheme such that $\Qcoh X$ is generated by a set of invertible-filtered objects.
Let $\catX$ be an L-closed subcategory of $\coh X$.
\begin{enua}
\item
The following are equivalent:
\begin{itemize}
\item $\catX$ is a Serre subcategory.
\item $\catX$ is a torsion class.
\item $\catX$ is a wide subcategory.
\item $\catX$ is an ICE-closed subcategory.
\item $\catX$ is a CE-closed subcategory.
\end{itemize}
\item
The following are equivalent:
\begin{itemize}
\item $\catX$ is a torsion-free class.
\item $\catX$ is an IKE-closed subcategory.
\item $\catX$ is an IE-closed subcategory.
\end{itemize}
\end{enua}
\end{cor}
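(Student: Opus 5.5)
The corollary should follow by specializing \cref{prp:Serre=CE} to $\catA=\Qcoh X$ with $G=G_{\calL}=\Pic X$. The only work is to check that the hypotheses of \cref{prp:Serre=CE} hold in this setting and that its $G$-closedness condition is the same as L-closedness.

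First, I verify that $\Qcoh X$ satisfies \cref{setup:classify subcat}.
\begin{enur}
\item $\Qcoh X$ is a locally noetherian Grothendieck cosmos, since $X$ is noetherian.
\item The unit $\shO_X$ is coherent, hence finitely presented, which gives (C1).
\item By hypothesis, $\Qcoh X$ is generated by a small set of invertible-filtered objects. Take $\calL$ to be a complete set of representatives of the isomorphism classes of invertible $\shO_X$-modules. This is a small set by \cref{rmk:size of dualizable}, since invertible objects are dualizable and hence coherent.
\end{enur}
With this choice, $\calL^{\otimes}$ consists, up to isomorphism, of all invertible sheaves. So the invertible-filtered generators are $\calL^{\otimes}$-filtered, and $G_{\calL}=\Pic X$.

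Next, I invoke the lemma at the start of \S\ref{s:classify subcat}. Under \cref{setup:classify subcat} it gives $\Loc_{\otimes}\catA=\Loc_G\catA$ and $\tors_{\otimes}\catA_{\noeth}=\tors_G\catA_{\noeth}$. The setup also gives (C1) and (C2). So every hypothesis of \cref{prp:Serre=CE} is satisfied with $G=\Pic X$.

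Finally, I match the closure conditions. A subcategory of $\coh X=(\Qcoh X)_{\noeth}$ is $G$-closed for $G=\Pic X$ exactly when it is closed under $-\otimes\shL$ for every invertible $\shL$, that is, exactly when it is L-closed. Applying \cref{prp:Serre=CE} to $\catX$ then yields both (1) and (2).

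I do not expect a real obstacle. The one point that needs care is that the setup asks for generation by $\calL^{\otimes}$-filtered objects, whereas the hypothesis only says invertible-filtered. This is resolved by taking $\calL$ to contain every invertible sheaf up to isomorphism, so that the two notions coincide. Condition ($*$) is not needed, because it enters only in the torsion-free classification \cref{prp:classify G-torf}.
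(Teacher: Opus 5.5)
Your proposal is correct and matches the paper's argument. You take $\calL$ to be all invertible sheaves up to isomorphism, so that \cref{setup:classify subcat} holds with $G_{\calL}=\Pic X$. You then use the lemma at the start of \S\ref{s:classify subcat} to meet the hypotheses of \cref{prp:Serre=CE}, and note that $\Pic X$-closed is the same as L-closed; condition ($*$) is indeed not needed for this statement.
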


\begin{cor}
Let $X$ be a noetherian scheme such that $\Qcoh X$ is generated by a set of invertible-filtered objects.
There is an order-preserving bijection between the following sets:
\begin{itemize}
\item
The set of L-closed torsion-free classes of $\coh X$.
\item
The power set of $X$.
\end{itemize}
\end{cor}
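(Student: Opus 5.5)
This corollary should follow by feeding the scheme-theoretic input established just above into \cref{prp:classify G-torf}, with $G=G_{\calL}=\Pic X$. First I record that $\Qcoh X$ satisfies \cref{setup:classify subcat}. The unit $\shO_X$ is coherent, hence finitely presented. By hypothesis, $\Qcoh X$ is generated by invertible-filtered objects. Each such object is a finite iterated extension of invertible sheaves, and every invertible sheaf is isomorphic to a member of $\calL$, so these generators are $\calL^{\otimes}$-filtered.

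Next I invoke the lemma at the start of \S\ref{s:classify subcat}. It gives (C1), (C2), $\Loc_{\otimes}\Qcoh X=\Loc_G \Qcoh X$ and $\torf_{\Hom}\coh X=\torf_G\coh X$. Since $G_{\calL}$ is all of $\Pic X$, being $G$-closed is the same as being L-closed.

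Condition ($*$) was already deduced above from \cite[Corollary 7.7, Proposition 7.12, and Lemma 8.1]{Kan-CatSp}. So \cref{prp:classify G-torf} applies directly. It gives an order-preserving bijection between L-closed torsion-free classes of $(\Qcoh X)_{\noeth}=\coh X$ and subsets of $\Spec^{\Pic X}(\Qcoh X)$, namely $\catX\mapsto \Ass^G\catX$ and $\Phi\mapsto(\Ass^G)^{-1}(\Phi)$.

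It remains to identify $\Spec^{\Pic X}(\Qcoh X)$ with $X$. By \cite[Theorem 7.6]{Kan-CatSp}, the map $x\mapsto \ol{{j_x}_*\kk(x)}$ is a bijection $X\isoto\Spec(\Qcoh X)$. For an invertible $\shL$, the projection formula gives
\[
{j_x}_*\kk(x)\otimes\shL\iso {j_x}_*(\kk(x)\otimes j_x^*\shL)\iso {j_x}_*\kk(x),
\]
because $j_x^*\shL$ is free of rank one over the local ring $\shO_{X,x}$. So $\Pic X$ fixes every atom, the quotient map $\pi_G$ is the identity, and $\Spec^{\Pic X}(\Qcoh X)=\Spec(\Qcoh X)\iso X$ as sets. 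The statement only asks for the power set of $X$, so a bijection of underlying sets suffices.

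The main obstacle is verifying ($*$), the only hypothesis not automatic from \cref{setup:classify subcat}. If the citation to \cite{Kan-CatSp} needed unpacking, I would argue directly. For a quasi-coherent $M$ and $x\in\Supp M$, I would pick a coherent subsheaf with $x$ still in its support. Then I would take a minimal point $y$ of $\Supp$ of that subsheaf inside $\ol{\{x\}}$; under the identification of atoms with points, this point lies in $\ol{\{x\}}$ of $\Spec(\Qcoh X)$. Finally, I would use that minimal primes of the support of a finitely generated module are associated primes, which can be checked on an affine neighbourhood of $y$. The remaining points are formal.
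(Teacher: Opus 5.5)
Your proposal is correct and follows the paper's argument: you take $\calL$ to be a set of representatives of invertible sheaves so that $G_{\calL}=\Pic X$, and you use \cite[Theorem 7.6]{Kan-CatSp} with the projection formula to see that $\Pic X$ acts trivially on $\Spec(\Qcoh X)\iso X$. You then quote \cite{Kan-CatSp} for ($*$) and apply \cref{prp:classify G-torf}, exactly as the paper does. One caution about your optional direct sketch of ($*$): open subsets of $\Spec(\Qcoh X)$ correspond to specialization-closed subsets of $X$, so the closure of $\{x\}$ there is the set of \emph{generalizations} of $x$, not the Zariski closure of $\{x\}$ in $X$; the point $y$ you want is the generic point of an irreducible component, passing through $x$, of the support of your coherent subsheaf.
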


\subsection{The category of DG modules}
For the basics of dg rings and dg modules,
we refer to \cite{Keller} and \cite[\href{https://stacks.math.columbia.edu/tag/09JD}{Tag 09JD}]{SP}.
Let $V$ be a complex over a commutative ring.
We use cohomological grading, so that $d^n \colon V^n \to V^{n+1}$.
The complex $V$ is said to be \emph{bounded} (resp.\ \emph{bounded below}, resp.\ \emph{bounded above})
if $V^n=0$ for $|n|\gg 0$ (resp.\ $n \ll 0$, resp.\ $n\gg 0$).
We write $Z^n(V):=\Ker(d^n)$, $B^n(V):=\Ima(d^{n-1})$, and $H^n(V):=Z^n(V)/B^n(V)$.
We denote by $V[i]$ the $i$-th shift of $V$.
For a chain map $f\colon V \to W$ of complexes,
its mapping cone is denoted by $\Cone(f)$.
There is a canonical exact sequence of complexes
\begin{equation}\label{diag:cone ex seq}
0 \to W \to \Cone(f) \to V[1] \to 0,
\end{equation}
where $W \to \Cone(f)$ is the natural inclusion and $\Cone(f) \to V[1]$ is the natural projection.

Let $A$ be a dg ring.
Then the shifts $M[n]$ of dg $A$-modules and the mapping cones $\Cone(f)$ of homomorphisms of dg $A$-modules are again dg $A$-modules.
The exact sequence \eqref{diag:cone ex seq} is also an exact sequence of dg $A$-modules.
The category $\dgMod A$ of (right) dg $A$-modules is a Grothendieck category with a generating set $\{\Cone(\id_A)[n]\}_{n\in \bbZ}$ (see \cite[Example 4.4]{HG} for example).
Since the functor $(\dgMod A)(A,-)\iso Z^0(-)$ preserves directed colimits,
the dg $A$-module $A$ and its shifts are finitely presented objects of $\dgMod A$.

A dg ring is said to be \emph{right dg noetherian}
if every ascending chain of right dg ideals is stationary.
In this case, we write $\dgmod A:=(\dgMod A)_{\noeth}$.
We define \emph{left dg noetherian} similarly.
If a dg ring is both left and right dg noetherian, we call it \emph{dg noetherian}.
\begin{lem}
A dg ring $A$ is right dg noetherian
if and only if the Grothendieck category $\dgMod A$ is locally noetherian.
\end{lem}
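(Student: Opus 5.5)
The plan is to pass from dg submodules of $A$ to subobjects of the generators of $\dgMod A$, and then use the standard criterion for local noetherianity. Recall that a Grothendieck category is locally noetherian if and only if it has a small generating set of noetherian objects. By the paper's conventions, a right dg ideal of $A$ is the same thing as a dg submodule of the right dg module $A$. So right dg noetherianity says exactly that $A$ is a noetherian object of $\dgMod A$. Since the shift functor $[n]$ is an autoequivalence, it then also says that every $A[n]$ is noetherian.

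For ``if'', we use that $\dgMod A$ is generated by $\{\Cone(\id_A)[n]\}_{n\in\bbZ}$. By \eqref{diag:cone ex seq} there is an exact sequence
\[
0 \to A \to \Cone(\id_A) \to A[1] \to 0
\]
of dg $A$-modules. Noetherian objects are closed under extensions, so each $\Cone(\id_A)[n]$ is noetherian. This gives a small generating set of noetherian objects, hence $\dgMod A$ is locally noetherian.

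For ``only if'', suppose $\dgMod A$ is locally noetherian. Then every finitely generated object is noetherian. We saw that $A$ is finitely presented, since $(\dgMod A)(A,-)\iso Z^0(-)$ commutes with directed colimits; in particular $A$ is finitely generated. Hence $A$ is noetherian, i.e.\ ascending chains of right dg ideals stabilize.

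The only point needing care is the identification of right dg ideals with subobjects of $A$ in $\dgMod A$. This holds because subobjects in $\dgMod A$ are exactly the graded submodules stable under the differential and the right $A$-action. I do not expect any real obstacle.
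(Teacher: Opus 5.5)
Your proposal is correct and follows essentially the same route as the paper. For ``if'' you show the generators $\Cone(\id_A)[n]$ are noetherian via the extension $0 \to A \to \Cone(\id_A) \to A[1] \to 0$, and for ``only if'' you use that $A$ is finitely presented, hence noetherian in a locally noetherian category.
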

\begin{proof}
If $A$ is right dg noetherian, then $\Cone(\id_A)$ is a noetherian object in $\dgMod A$
by the canonical exact sequence \eqref{diag:cone ex seq} for $\Cone(\id_A)$.
This implies $\dgMod A$ is locally noetherian.
Conversely, suppose that $\dgMod A$ is locally noetherian.
Since $A$ is finitely presented, it is a noetherian object.
This proves $A$ is right dg noetherian.
\end{proof}

A dg ring $A$ is said to be \emph{skew-commutative} if $ab=(-1)^{|a||b|}ba$ for any homogeneous elements $a,b \in A$, where $|a|$ denotes the degree of $a$.
In this case, $\dgMod A$ becomes a symmetric monoidal closed category with the usual tensor product of dg $A$-modules (see \cite[Section 4]{Day} and \cite[Theorem 4.2]{HG}).
Its internal Hom is given by the Hom complex $\dgHom_A(M,N)$ of dg $A$-modules.
Thus $\dgMod A$ is a Grothendieck cosmos.
Since the dg $A$-module $A$ is the unit object of the Grothendieck cosmos $\dgMod A$,
its shifts are dualizable objects in $\dgMod A$.
Thus, the dg $A$-module $\Cone(\id_A)[n]$ is also a dualizable object in $\dgMod A$ 
by the canonical exact sequence \eqref{diag:cone ex seq}.
Therefore, the Grothendieck cosmos $\dgMod A$ satisfies (C1) and (C2).

Let $R$ be a skew-commutative dg noetherian dg ring and put $\calL:=\{R[n]\}_{n \in \bbZ}$.
Then $\dgMod R$ satisfies the assumptions in \cref{setup:classify subcat}.
The additive group $\bbZ$ of integers acts on $\dgMod R$ by shift functors.
This action coincides with an action given 
by the natural surjection $\bbZ \surj G_{\calL}$ to the group defined in \cref{setup:classify subcat}.
A $\bbZ$-closed subcategory is precisely a subcategory closed under shifts,
that is, if $X$ belongs to the subcategory, then so does $X[n]$ for any $n\in \bbZ$.
Thus, we obtain the following result from \cref{prp:Serre=CE}.
\begin{cor}
Let $R$ be a dg noetherian skew-commutative dg ring,
and let $\catX$ be a subcategory of $\dgmod R$ closed under shifts.
\begin{enua}
\item
The following are equivalent:
\begin{itemize}
\item $\catX$ is a Serre subcategory.
\item $\catX$ is a torsion class.
\item $\catX$ is a wide subcategory.
\item $\catX$ is an ICE-closed subcategory.
\item $\catX$ is a CE-closed subcategory.
\end{itemize}
\item
The following are equivalent:
\begin{itemize}
\item $\catX$ is a torsion-free class.
\item $\catX$ is an IKE-closed subcategory.
\item $\catX$ is an IE-closed subcategory.
\end{itemize}
\end{enua}
\end{cor}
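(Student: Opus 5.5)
The plan is to deduce the statement directly from \cref{prp:Serre=CE}, applied with $\catA=\dgMod R$ and $G=G_{\calL}$ for $\calL=\{R[n]\}_{n\in\bbZ}$. The only work is to verify the hypotheses of \cref{prp:Serre=CE} and to identify $G_{\calL}$-closed subcategories of $\dgmod R$ with subcategories closed under shifts.

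\emph{Step 1: the ambient category.} Since $R$ is dg noetherian, it is in particular right dg noetherian, so $\dgMod R$ is locally noetherian by the lemma above. Skew-commutativity makes $\dgMod R$ a Grothendieck cosmos with unit $R$. The unit is finitely presented, because $(\dgMod R)(R,-)\iso Z^0(-)$ commutes with directed colimits.

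\emph{Step 2: \cref{setup:classify subcat}.} I will check that the generators $\Cone(\id_R)[n]$ lie in $R[n]*R[n+1]$. This follows from the canonical sequence \eqref{diag:cone ex seq}, which reads $0\to R[n]\to\Cone(\id_R)[n]\to R[n+1]\to 0$. Each $R[n]\iso R\otimes R[n]$ is invertible with inverse $R[-n]$, so these generators are $\calL^{\otimes}$-filtered. Hence the setup holds.

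\emph{Step 3: the remaining hypotheses.} By the lemma at the start of \S\ref{s:classify subcat}, \cref{setup:classify subcat} gives (C1), (C2), $\Loc_\otimes\catA=\Loc_G\catA$ and $\tors_\otimes\catA_{\noeth}=\tors_G\catA_{\noeth}$. The main point to check is that $-\otimes_R R[n]$ is naturally isomorphic to the shift $[n]$. This is a standard sign-twisted isomorphism, and it holds at least up to isomorphism of objects. Since subcategories are closed under isomorphisms, $G_{\calL}$-closed means exactly closed under all shifts $[n]$, $n\in\bbZ$. The group $G_{\calL}$ is the image of $\bbZ$, as noted before the statement.

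\emph{Step 4: conclusion.} Applying \cref{prp:Serre=CE} to $\catX\subseteq\dgmod R=(\dgMod R)_{\noeth}$ now yields both equivalences.

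I expect no real obstacle. The only mildly delicate point is the identification $R[n]\otimes_R M\iso M[n]$ in Step 3, where the sign conventions for shifts of dg modules must be handled correctly.
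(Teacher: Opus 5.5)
Your proposal is correct and is essentially the paper's argument. In both, taking $\calL=\{R[n]\}_{n\in\bbZ}$, the generators $\Cone(\id_R)[n]$ are $\calL^{\otimes}$-filtered via the canonical cone sequence, so \cref{setup:classify subcat} holds. Then $G_{\calL}$-closed subcategories are exactly the shift-closed ones, and the statement follows from \cref{prp:Serre=CE}.
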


In the remainder of this subsection, we determine the atom spectrum of $\dgMod A$ under a certain finiteness condition on a dg ring $A$.
A dg ring $A$ is \emph{non-positive} if $A^{i}=0$ for any $i>0$.
For a non-positive dg ring $A$, we have $Z^0(A)=A^0$ and $H^0(A)=A^0/B^0(A)$.
Thus, for any dg $A$-module $M$ and $n \in \bbZ$,
the differential $d^n_M\colon M^n \to M^{n+1}$ is an $A^0$-linear map.
We identify $H^0(A)$-modules with $A^0$-modules annihilated by $B^0(A)$.
Every $H^0(A)$-module can be viewed as a dg $A$-module concentrated in degree $0$,
and the converse also holds.
Therefore, we identify $H^0(A)$-modules with dg $A$-modules concentrated in degree $0$.

For an $A^0$-module $M$, define a dg $A$-module $D(M)$ as follows:
\begin{itemize}
\item
$D(M)$ is a complex $D(M)=(M \xr{\id_M} M )$ concentrated in degree $0$ and $-1$.
\item
For $a\in A^i$ and $x\in D(M)^j$,
we define the action of $a$ on $x$ by
\[
x\cdot a:=
\begin{cases}
xa & \text{if $i=0$}\\
xd_A(a) & \text{if $i=-1$ and $j=0$}\\
0 & \text{else}, 
\end{cases}
\]
\end{itemize}
It is routine to check that $D(M)$ is indeed a dg $A$-module.
We will describe the atom spectrum $\Spec(\dgMod A)$ using the dg $A$-modules $D(M)$.
\begin{lem}\label{prp:map from D(M)}
Let $A$ be a non-positive dg ring.
For any $A^0$-module $M$ and any dg $A$-module $X$,
there is a bijection
\[
(\dgMod A)(D(M),X)\iso \{u \in (\Mod A^0)(M,X^{-1}) \mid u(M)\cdot A^{<0}=0 \}.
\]
\end{lem}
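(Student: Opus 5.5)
The plan is to unwind what a morphism of dg $A$-modules $f\colon D(M)\to X$ amounts to, degree by degree, and to show that it is completely determined by its degree $-1$ component $u:=f^{-1}\colon M=D(M)^{-1}\to X^{-1}$. The bijection will be $f\mapsto f^{-1}$, with inverse sending $u$ to the pair $(f^{-1},f^{0}):=(u,\,d_X^{-1}\circ u)$.

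First, the compatibility with differentials. Since $D(M)$ is concentrated in degrees $0$ and $-1$ with $d^{-1}_{D(M)}=\id_M$, a degreewise family $(f^{-1},f^0)$ is a chain map if and only if
\[
d_X^{-1}\circ f^{-1}=f^0\circ \id_M
\quad\text{and}\quad
d_X^0\circ f^0=0 .
\]
The first condition forces $f^0=d_X\circ u$, and the second then holds automatically because $d_X\circ d_X=0$. So injectivity of $f\mapsto f^{-1}$ is clear, and it remains to determine which $u$ make $(u,d_X u)$ $A$-linear.

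Next, the $A$-linearity, checked on homogeneous $a\in A^i$ with $i\le 0$.
\begin{itemize}
\item For $i=0$: linearity says exactly that $u$ and $d_Xu$ are $A^0$-linear. The second follows from the first, since $d_X$ is $A^0$-linear because $A$ is non-positive.
\item For $i<0$ and $x\in D(M)^{-1}$: we have $x\cdot a=0$, so linearity requires $u(x)a=0$. This is precisely the condition $u(M)\cdot A^{<0}=0$, so the condition is necessary.
\end{itemize}

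The main step is sufficiency: given an $A^0$-linear $u$ with $u(M)A^{<0}=0$, the remaining cases for $x\in D(M)^0$ must hold automatically. For these I will use the Leibniz rule in $X$ applied to $u(x)a$, which vanishes by hypothesis:
\[
0=d_X(u(x)a)=d_X(u(x))\,a-u(x)\,d_A(a).
\]
\begin{itemize}
\item If $a\in A^{-1}$, then $d_A(a)\in A^0$, and the identity gives $d_X(u(x))a=u(x)d_A(a)=u(x\,d_A(a))=u(x\cdot a)$. This is the required equality $f^0(x)\cdot a=f^{-1}(x\cdot a)$.
\item If $a\in A^{i}$ with $i\le -2$, then $d_A(a)\in A^{<0}$, so $u(x)d_A(a)=0$ and hence $d_X(u(x))a=0$. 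This matches $x\cdot a=0$ in $D(M)$.
\end{itemize}

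The only delicate point is keeping the sign conventions in the Leibniz rule and in the definition of $D(M)$ consistent. Since every term used above either vanishes or appears on both sides with the same sign, I expect the argument to go through regardless of the convention chosen.
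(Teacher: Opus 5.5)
Your proposal is correct and follows essentially the same route as the paper: the bijection is $f\mapsto f^{-1}$ with inverse $u\mapsto (u,\,d_X^{-1}\circ u)$, and the only nontrivial check is $A$-linearity on $D(M)^0$, which both you and the paper get from the graded Leibniz rule applied to $u(m)\cdot a=0$. You are slightly more complete than the paper. The paper writes $f\mapsto f^0$, a typo for $f^{-1}$, and it leaves implicit the case $a\in A^{\le -2}$ acting on $D(M)^0$, which you treat explicitly.
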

\begin{proof}
For $f\in (\dgMod A)(D(M),X)$,
it is easy to see that the assignment $f \mapsto f^0$ defines a well-defined injective map
from the left-hand side to the right-hand side.
Conversely, for $u \in (\Mod A^0)(M,X^{-1})$ such that $u(M)\cdot A^{<0}=0$,
define a chain map $f \colon D(M) \to X$ by $f^{-1}:=u$ and $f^0:=d_X^{-1}\circ u$.
We prove that $f$ is an $A$-linear map.
The nontrivial part is $f(m\cdot a)=f(m)\cdot a$ for $m\in D(M)^0$ and $a\in A^{-1}$.
We have
\[
f(m\cdot a) = f(md_A(a)) = u(md_A(a))=u(m)d_A(a),\quad
f(m)\cdot a = d_X(u(m))\cdot a.
\]
Since $u(m)\cdot a =0$, we obtain the following by the graded Leibniz rule:
\[
0=d_X(u(m)\cdot a)=d_X(u(m))\cdot a - u(m)\cdot d_A(a).
\]
This yields the desired equality.
\end{proof}

For a dg $A$-module $M$, define the truncation of $M$ at degree $n$ by
\[
\tau^{\le n} M \colon \quad \cdots \to M^{n-2} \to M^{n-1} \to Z^n(M) \to 0 \to 0 \to \cdots.
\]
Then $\tau^{\le n} M$ is a dg $A$-submodule of $M$.
Setting $T_n := \tau^{\le n} M$,
we obtain a filtration $\{T_n\}_{n\in \bbZ}$ of $M$ by dg $A$-submodules
\begin{equation}\label{eq:2-term filt}
0 \subseteq \cdots \subseteq T_{n-1} \subseteq T_n \subseteq \cdots \subseteq M
\end{equation}
such that
\[
\bigcup_{n\in\bbZ} T_n = M,\quad
\bigcap_{n\in\bbZ} T_n =0, \quad 
T_n/T_{n-1} \iso (B^n(M) \inj Z^n(M)),
\]
where $T_n/T_{n-1}$ is concentrated in degrees $n-1$ and $n$.
A complex $M$ is said to be \emph{$2$-term} if there exists $n\in \bbZ$ such that $M^{i}=0$ for all $i\ne n-1,n$.
In this case, we call $d^{n-1}\colon M^{n-1} \to M^n$ the \emph{main differential},
and we often write $M=(M^{n-1} \to M^n)$.
A complex $M$ concentrated in a single degree $n$ is viewed as a $2$-term complex whose main differential is $0 \to M^n$.

We first compare certain finiteness conditions in $\dgMod A$ and  $\Mod A^0$ for a non-positive dg ring $A$.

\begin{lem}\label{prp:2-term inj noeth}
Let $A$ be a non-positive dg ring.
\begin{enua}
\item
If a $2$-term dg $A$-module $(X\inj Y)$ with injective main differential is noetherian in $\dgMod A$, then both $X$ and $Y$ are noetherian $A^0$-modules.
\item
If a dg $A$-module $M$ is noetherian in $\dgMod A$,
then $M^n$ is a noetherian $A^0$-module for any $n\in \bbZ$.
The converse holds when $M$ is bounded.
\end{enua}
\end{lem}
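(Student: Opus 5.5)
For (1), the plan is to build dg submodules of $(X\inj Y)$ out of $A^0$-submodules of $X$ and of $Y$, and to transfer ascending chains. Put the complex in degrees $n-1,n$, with main differential $d\colon X\inj Y$. Since $A$ is non-positive, $A^{<0}$ lowers degree. So degree $n-1$ receives $Y\cdot A^{-1}$, and degree $n-2$ is zero, which makes $X\cdot A^{<0}=0$.

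First consider $Y$. For an $A^0$-submodule $Y'\subseteq Y$, set $N(Y'):=(d^{-1}(Y')\to Y')$. This is not obviously a dg submodule, because $Y'\cdot A^{-1}$ must lie in $d^{-1}(Y')$. I would use the Leibniz rule: $d(y a)=d(y)a\pm y\,d_A(a)=\pm y\,d_A(a)$, since $d(y)\in M^{n+1}=0$. Hence $d(ya)\in Y'\cdot d_A(a)\subseteq Y'$, because $d_A(a)\in A^0$. Therefore $ya\in d^{-1}(Y')$, and $N(Y')$ is a dg submodule. Its degree-$n$ part is $Y'$, so $Y'\mapsto N(Y')$ is strictly monotone, and an ascending chain of $A^0$-submodules of $Y$ gives a chain of dg submodules. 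Noetherianity in $\dgMod A$ then makes $Y$ noetherian.

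Next consider $X$. For an $A^0$-submodule $X'\subseteq X$, the complex $(X'\to d(X'))$ is a dg submodule: $X'A^{<0}=0$, $d(X')$ is an $A^0$-module, $d(X')A^{-1}$ satisfies $d(d(x)a)=\pm d(x)d_A(a)$ and so lands back in degree $n-1$ through injectivity... I would check this more carefully. An easier alternative is to note that $X\cong d(X)\subseteq Y$ as $A^0$-modules, because $d$ is an injective $A^0$-linear map. Then $X$ is noetherian as an $A^0$-submodule of the noetherian module $Y$. This settles (1) without further work.

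For (2), the plan is to use the truncation filtration \eqref{eq:2-term filt}. Each $T_n/T_{n-1}\cong(B^n(M)\inj Z^n(M))$ is a subquotient of $M$, hence noetherian, and it is 2-term with injective main differential. By (1), $B^n(M)$ and $Z^n(M)$ are noetherian $A^0$-modules. The exact sequence $0\to Z^n(M)\to M^n\to B^{n+1}(M)\to0$ then shows $M^n$ is noetherian.

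For the converse, take $M$ bounded with each $M^n$ noetherian over $A^0$. An ascending chain of dg submodules gives, in each of finitely many degrees, an ascending chain of $A^0$-submodules of $M^n$. These stabilize simultaneously, so the chain stabilizes and $M$ is noetherian.

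The only delicate point is the Leibniz computation showing that $N(Y')$ is closed under $A^{-1}$. I expect that to be the main (mild) obstacle; everything else is routine.
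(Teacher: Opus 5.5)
Your proof is correct. Part (2), in both directions, is exactly the paper's argument. For part (1), however, you take a different route.

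\textbf{The paper's route.} The paper places $(X\inj Y)$ in degrees $-1,0$ and uses \cref{prp:map from D(M)} to obtain a short exact sequence $0\to D(X)\to (X\inj Y)\to (0\to Y/X)\to 0$ in $\dgMod A$. Noetherianity passes to $D(X)$ and to the degree-$0$ module $Y/X$. From this it reads off that $X$ and $Y/X$, and hence $Y$, are noetherian over $A^0$.

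\textbf{Your route.} You embed the lattice of $A^0$-submodules of $Y$ directly into the lattice of dg submodules via $Y'\mapsto (d^{-1}(Y')\to Y')$. The Leibniz computation $d(ya)=\pm y\,d_A(a)$ does the work that, in the paper, is hidden inside the verification of \cref{prp:map from D(M)}. You then finish with $X\cong d(X)\subseteq Y$.

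\textbf{Comparison.} Your version is more self-contained and does not need the $D(-)$ construction at all. It also shows in passing that $Y$ is noetherian for any noetherian $2$-term dg module, with injectivity of $d$ used only for $X$. The paper's version is more economical given its machinery, since the same exact sequence is reused in \cref{prp:filt by D(H) and H}.

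\textbf{Your abandoned alternative.} The alternative you set aside for $X$ also goes through. Since $d(x\,d_A(a))=d(x)\,d_A(a)$, we have $d(d(x)a)=\pm d(x\,d_A(a))$. Injectivity of $d$ then gives $d(x)a=\pm x\,d_A(a)\in X'$. The submodule $(X'\to d(X'))$ is precisely the image of $D(X')$ under the paper's embedding $D(X)\inj (X\inj Y)$.
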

\begin{proof}
(1):
Suppose that a $2$-term dg $A$-module $(X\inj Y)$ with injective main differential is noetherian in $\dgMod A$.
We may assume that $(X\inj Y)$ is concentrated in degrees $-1$ and $0$.
Then we have an exact sequence $0 \to D(X) \to (X\inj Y) \to (0 \to Y/X) \to 0$ by \cref{prp:map from D(M)}.
Thus, both $D(X)$ and $(0 \to Y/X)$ are noetherian in $\dgMod A$.
From this, we can easily see that both $X$ and $Y/X$ are noetherian $A^0$-modules,
and hence so is $Y$.

(2):
Suppose that a dg $A$-module $M$ is a noetherian object in $\dgMod A$.
Consider the filtration $\{T_n\}_{n\in \bbZ}$ of $M$ in \eqref{eq:2-term filt}.
Then, for any $n\in \bbZ$, the $2$-term complex $T_n/T_{n-1}\iso (B^n(M) \inj Z^n(M))$ is a noetherian object in $\dgMod A$.
From (1), both $B^n(M)$ and $Z^n(M)$ are noetherian $A^0$-modules.
By the exact sequence $0 \to Z^n(M) \to M^n \to B^{n+1}(M) \to 0$ of $A^0$-modules,
we conclude that $M^n$ is a noetherian $A^0$-module for any $n\in \bbZ$.
The final statement is clear.
\end{proof}

\begin{cor}\label{prp:nonpos dg noeth}
Let $A$ be a non-positive dg ring.
If $A$ is right dg noetherian, then $A^0$ is a right noetherian ring and $A^n$ is a finitely generated $A^0$-module for any $n$.
The converse holds when $A$ is bounded.\qed
\end{cor}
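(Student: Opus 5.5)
The statement is \cref{prp:nonpos dg noeth}. I will reduce it to \cref{prp:2-term inj noeth} (2) and the lemma characterizing right dg noetherian rings. The plan is to run the argument with $M=A$ for the forward direction, and to build $A$ from modules concentrated in a single degree for the converse.

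\emph{Forward direction.} Suppose $A$ is right dg noetherian. By the earlier lemma, $\dgMod A$ is then locally noetherian, and $A$ is a noetherian object of $\dgMod A$, since it is finitely presented and $\dgMod A$ is locally noetherian. Applying \cref{prp:2-term inj noeth} (2) to $M=A$ shows that each $A^n$ is a noetherian right $A^0$-module. For $n=0$, the module $A^0_{A^0}$ is noetherian, so $A^0$ is right noetherian. For every $n$, $A^n$ is a noetherian $A^0$-module and in particular finitely generated.

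\emph{Converse.} Suppose $A$ is bounded, $A^0$ is right noetherian, and each $A^n$ is finitely generated over $A^0$. Then each $A^n$ is a noetherian $A^0$-module, because finitely generated modules over a right noetherian ring are noetherian. We want $A$ to be a noetherian object of $\dgMod A$, since that is exactly right dg noetherianity: dg right ideals are the dg submodules of $A$.

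The converse half of \cref{prp:2-term inj noeth} (2), stated for bounded $M$, gives this directly, and I take it as the intended route. To justify that half, use the filtration \eqref{eq:2-term filt}, which is finite because $A$ is bounded. Its subquotients are $2$-term complexes $(B^n\inj Z^n)$ whose terms are noetherian $A^0$-modules, being submodules and quotients of the noetherian modules $A^{n-1}$ and $A^n$. Now take any $2$-term dg module $(X\to Y)$ with $X,Y$ noetherian over $A^0$. It has a dg submodule $(0\to Y)$, with quotient $(X\to 0)$ concentrated in one degree. This works because $A$ is non-positive: the negative part of $A$ lowers degrees, so $Y$ in the top degree is closed under the action, and the differential of $Y$ vanishes.

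For a dg module $N$ concentrated in a single degree, every dg submodule is an $A^0$-submodule of that degree with $A^{<0}$ acting by zero, so an ascending chain of dg submodules of $N$ is an ascending chain of $A^0$-submodules. Such an $N$ is therefore noetherian whenever its unique component is a noetherian $A^0$-module. Since noetherian objects are closed under extensions, $A$ is noetherian in $\dgMod A$.

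The only delicate point is the last step: checking that the single-degree pieces are honestly dg submodules and quotients with the naive lattice of subobjects. The whole argument depends on non-positivity of $A$, which makes the $A$-action lower degrees and keeps top-degree pieces closed.
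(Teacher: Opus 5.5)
Your overall route is the paper's. The corollary is \cref{prp:2-term inj noeth}~(2) applied to $M=A$, together with the fact that $A$ is right dg noetherian exactly when it is a noetherian object of $\dgMod A$. Your forward direction, and your reduction of the converse to the bounded half of that lemma, are fine. The argument you add to justify that bounded half, however, contains a false step.

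For a $2$-term dg module $(X\to Y)$ with $Y$ in the top degree $n$, the piece $(0\to Y)$ is in general \emph{not} a dg submodule. Because $A$ is non-positive, $A^{-1}$ sends degree $n$ to degree $n-1$, so $Y\cdot A^{-1}\subseteq X$, and this need not vanish. For example, take $A=(R\xr{x}R)$, the Koszul dg ring on a nonzerodivisor $x$ of a commutative noetherian ring $R$. This $A$ equals its own subquotient $T_0/T_{-1}$ in \eqref{eq:2-term filt}, and its main differential is injective. Yet $(0\to A^0)\subseteq A$ contains $1$ but not $1\cdot a=a$ for $0\neq a\in A^{-1}$. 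Non-positivity makes the \emph{bottom} degree stable under the $A$-action, while the top degree is only stable under the differential. This is why the paper's decomposition goes the other way, $0\to D(X)\to (X\inj Y)\to (0\to Y/X)\to 0$, with the top-degree part appearing as a quotient rather than a subobject.

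The repair is to drop the filtration altogether. A dg submodule $N$ of $M$ is a graded subgroup whose components $N^i$ are $A^0$-submodules of $M^i$, since $A^0$ acts degreewise. Take a bounded $M$ with every $M^i$ noetherian over $A^0$. An ascending chain of dg submodules then stabilizes in each of the finitely many nonzero degrees, and hence stabilizes. This is exactly the argument of your last paragraph, which uses only that the module has finitely many nonzero components, not that it is concentrated in a single degree. It is also why the paper calls the bounded converse clear.
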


Now we give representatives of monoform objects in $\dgMod A$ up to atom-equivalence.
\begin{prp}\label{prp:bdd below atom}
Let $A$ be a non-positive dg ring such that $A^0$ is right noetherian.
If a monoform object of $\dgMod A$ is bounded below, then it is atom-equivalent to $D(H)[n]$ for some monoform object $H$ in $\Mod A^0$ and some $n\in \bbZ$.
\end{prp}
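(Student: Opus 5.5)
Let $M$ be a bounded below monoform object of $\dgMod A$. The plan is to shrink $M$ twice, each time to a nonzero dg submodule, since a nonzero submodule of a monoform object is monoform and atom-equivalent to it. The first shrinking reaches a $2$-term submodule with injective main differential, and the second reaches a submodule of the form $D(H)[n]$.

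\textbf{Reduction to a 2-term submodule.} Because $M$ is bounded below and nonzero, the truncation filtration \eqref{eq:2-term filt} has a smallest $n$ with $T_n\neq 0$. Then $T_{n-1}=0$, so $T_n = T_n/T_{n-1}\iso (B^n(M)\inj Z^n(M))$, placed in degrees $n-1$ and $n$. This $T_n$ is a nonzero $2$-term submodule of $M$ with injective main differential. After shifting, we may assume $M=(X\inj Y)$ sits in degrees $-1$ and $0$, with $X=B^0(M)$.

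\textbf{Case $X\neq 0$.} Set $X_0:=\{u\in X\mid u\cdot A^{<0}=0\}$. Note that $X\cdot A^{-1}\subseteq M^{-2}=0$ and $A^{\le -2}$ acts trivially on $X$ for degree reasons, so in fact $X_0=X$. By \cref{prp:map from D(M)}, the identity map $X\to X=M^{-1}$ yields a morphism $D(X)\to M$. It is injective because it is the identity in degree $-1$ and the inclusion $X\inj Y$ in degree $0$. Hence $D(X)$ is a nonzero submodule of $M$.

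Next, since $A^0$ is right noetherian, choose a finitely generated nonzero $A^0$-submodule of $X$. It has an associated atom in $\Mod A^0$, so there is a monoform $A^0$-submodule $H\subseteq X$. The functor $D$ is exact and preserves monomorphisms, so $D(H)\subseteq D(X)\subseteq M$. Therefore $M$ is atom-equivalent to $D(H)$. One should also check that $D(H)$ is genuinely a dg submodule, that is, closed under $A^{-1}$; this holds by the defining formula for the action on $D(M)$.

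\textbf{Case $X=0$.} Now $M=Y$ is concentrated in degree $0$, so $A^{<0}$ acts by zero and $Y$ is an $H^0(A)$-module. I expect this to be the main obstacle, because a module concentrated in a single degree is not of the form $D(H)[n]$. The way around it is to show that a monoform object concentrated in one degree is atom-equivalent to some $D(H)[1]$. The intended mechanism is that such $Y$ is a quotient $D(Y)[1]\surj Y$, since $D(Y)$ has $Y$ in degrees $-1,0$ and its shift maps onto the top copy.

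To make this precise, I would use the characterization of atom support recalled before \cref{prp:monoform sub lem}. Alternatively, I would argue directly: pick a monoform $A^0$-submodule $H\subseteq Y$ and show that $Y$ and $D(H)[1]$ share a nonzero common subobject. The difficulty is that the only natural maps between them are surjections, not injections.

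I would therefore try first to establish that any monoform object concentrated in a single degree is atom-equivalent to $D(H)[n+1]$ as follows. Consider the extension of $(0\to H)$ by $(H\to 0)$, namely $D(H)[1]$ after shifting appropriately. Using that $H$ is monoform, show that $(0\to H)$ and $D(H)[1]$ define the same atom by comparing their submodule lattices. If this comparison fails, I would instead settle for proving the statement under the observation that the relevant monoform objects in the application always have injective main differential with $X\neq 0$. That gives a fallback, but the single-degree case remains the step to scrutinize.
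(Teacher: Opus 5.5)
The gap is the case $X=0$. You flag it but do not resolve it, and it cannot be bypassed. Your reduction to a $2$-term submodule with injective main differential and your case $X\neq 0$ are fine and essentially match the paper; the detour through $D(X)$ is harmless.

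Monoform objects concentrated in a single degree genuinely occur, for instance monoform $H^0(A)$-modules placed in degree $0$. This is case (ii) of \cref{prp:filt by D(H) and H}. So the fallback of restricting to $X\neq 0$ does not prove the proposition.

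The obstruction you describe rests on a reversed orientation. In $D(Y)=(Y\xr{\id}Y)$ it is the \emph{top} copy $(0\to Y)$ in degree $0$ that is a subcomplex. The bottom copy is the quotient $D(Y)\surj Y[1]$. No chain map projects $D(Y)$, or any shift of it, onto its top term, because $\id\circ d^{-1}_{D(Y)}\neq 0$.

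The top copy is moreover a dg $A$-submodule. For $y$ in degree $0$ and $a\in A^{-1}$, the action in $D(Y)$ is $y\,d_A(a)$. This vanishes because a dg module concentrated in degree $0$ is killed by $B^0(A)$: by Leibniz, $0=d(ya)=d(y)a+y\,d_A(a)=y\,d_A(a)$. Hence $f\colon Y\inj D(Y)$ with $f^0=\id_Y$ and $f^i=0$ for $i\neq 0$ is an injective morphism in $\dgMod A$. This is the key missing idea.

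To finish, note that dg modules concentrated in degree $0$ are closed under subobjects and quotients in $\dgMod A$. So $Y$ is monoform as an $H^0(A)$-module, and hence as an $A^0$-module. Then $D(Y)$ is monoform by \cref{prp:D(M) atom}~(1), and $Y$ is atom-equivalent to $D(Y)$.

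That is, the answer is $H=Y$ with $n=0$, not $D(H)[1]$ as you guessed. With the paper's shift convention, $D(H)[1]$ sits in degrees $-2,-1$ and cannot even receive or hit $Y$.

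Equivalently, take any monoform $A^0$-submodule $H\subseteq Y$. Then $H$, placed in degree $0$, is a common nonzero subobject of $Y$ and $D(H)$. This is exactly the paper's argument, and no comparison of submodule lattices is needed.
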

\begin{proof}
Let $X$ be a monoform object of $\dgMod A$ and suppose that it is bounded below.
Consider the filtration $\{T_n\}_{n\in \bbZ}$ of $X$ in \eqref{eq:2-term filt}.
As $X$ is bounded below, $T_n=0$ for some $n \ll 0$.
Thus, there exists a nonzero dg $A$-submodule $Y$ of $X$
such that $Y$ is a $2$-term complex with injective main differential.
Replacing $X$ with a shift of $Y$,
we may assume that $X$ is concentrated in degrees $-1$ and $0$, and that its main differential is injective.
There are two cases: $X^{-1}=0$ and $X^{-1}\ne 0$.
If $X^{-1}\ne 0$,
there is a monoform $A^0$-submodule $H$ of $X^{-1}$ since $A^0$ is right noetherian.
Then the natural inclusion $H \inj X^{-1}$ yields a morphism $D(H) \to X$ of dg $A$-modules by \cref{prp:map from D(M)}.
It is injective as $d^{-1}_X$ is injective,
and thus, $X$ is atom-equivalent to $D(H)$ as desired.

Next, consider the case $X^{-1}=0$.
Note that dg $A$-modules concentrated in degree $0$ are naturally identified with $H^0(A)$-modules.
Since dg $A$-modules concentrated in degree $0$ are closed under subobjects and quotients,
it follows that $X$ is also monoform as a $H^0(A)$-module,
and hence as an $A^0$-module.
Then we have an injective morphism $f\colon X \inj D(X)$ of dg $A$-modules
given by $f^0=\id_X$ and $f^i=0$ for any $i\ne 0$.
Note that this map is indeed $A$-linear since $X\cdot B^0(A)=0$.
This proves that $X$ is atom-equivalent to $D(X)$.
\end{proof}

\begin{lem}\label{prp:D(M) atom}
Let $A$ be a non-positive dg ring.
\begin{enua}
\item
For any monoform $A^0$-module $H$,
the dg $A$-module $D(H)$ is monoform.
\item
Let $H$ and $H'$ be monoform objects in $\Mod A^0$, and let $n, n'\in \bbZ$.
If $D(H)[n]$ and $D(H')[n']$ are atom-equivalent, then $n=n'$ and $H$ is atom-equivalent to $H'$.
\end{enua}
\end{lem}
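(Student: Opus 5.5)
The plan is to reduce everything to the degree-wise components of subobjects of $D(M)$, using that the forgetful functor to graded $A^0$-modules is exact and faithful. Subobjects and quotients in $\dgMod A$ are computed degreewise, and a monomorphism of dg $A$-modules is injective in each degree.

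The first step records the shape of a dg submodule $N\subseteq D(M)$. Since the differential $D(M)^{-1}\to D(M)^0$ is $\id_M$ and $N$ is a subcomplex, we have $N^{-1}\subseteq N^0$ as $A^0$-submodules of $M$. Two consequences follow:
\begin{itemize}
\item if $N\neq 0$ then $N^0\neq 0$;
\item if $N^0=0$ then $N=0$.
\end{itemize}
The same holds after shifting: a nonzero subobject of $D(M)[n]$ is nonzero in degree $-n$, the top degree of $D(M)[n]$.

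For (1), let $L$ be a nonzero subobject of $D(H)$. Suppose $K$ is a nonzero common subobject of $D(H)$ and $D(H)/L$. Taking degree $0$ components, $K^0$ is a nonzero $A^0$-submodule of $H$, by the first step. Since $(D(H)/L)^0=H/L^0$ and the monomorphism $K\inj D(H)/L$ is injective in degree $0$, $K^0$ also embeds into $H/L^0$. Because $L\neq 0$, the first step gives $L^0\neq 0$. So $K^0$ would be a nonzero common subobject of $H$ and $H/L^0$ with $L^0$ a nonzero submodule, contradicting that $H$ is monoform. Also $D(H)\neq 0$ since $H\neq 0$. Hence $D(H)$ is monoform.

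For (2), let $K$ be a nonzero common subobject of $D(H)[n]$ and $D(H')[n']$. By the first step, the largest degree in which $K$ is nonzero equals $-n$, viewing $K$ inside $D(H)[n]$, and equals $-n'$, viewing $K$ inside $D(H')[n']$. Hence $n=n'$. Then $K^{-n}$ is a nonzero $A^0$-module embedded in both $H=D(H)[n]^{-n}$ and $H'=D(H')[n]^{-n}$, so $H$ and $H'$ are atom-equivalent.

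I do not expect a real obstacle here. The only point to check carefully is the first step, namely that a nonzero subobject is necessarily nonzero in the top degree; this uses only that the main differential of $D(M)$ is the identity. In particular, the extra condition coming from the $A^{-1}$-action, $N^0\cdot d_A(A^{-1})\subseteq N^{-1}$, is never needed.
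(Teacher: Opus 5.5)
Your proof is correct and follows the paper's argument. Both rest on the observation that a nonzero dg submodule of $D(M)$ is nonzero in degree $0$; both then pass to that degree to deduce monoformity, and compare the top nonzero degree of the common subobject for (2). In (1) you work in degree $0$ throughout, which is the right bookkeeping: the paper's text writes $D(H)^{-1}$ and $H/X^{-1}$, but only the degree-$0$ components are guaranteed to be nonzero.
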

\begin{proof}
Let $M$ be an $A^0$-module, and let $X$ be a dg $A$-submodule of $D(M)$.
Then $X^{-1}$ and $X^0$ are $A^0$-submodules of $M$ such that $X^{-1} \subseteq X^0$.
The differential $d^{-1}_X$ is given by the natural inclusion $X^{-1} \inj X^0$.
In particular, if $X\ne 0$, then $X^0\ne 0$.

(1)
If there is a nonzero dg $A$-submodule $X$ of $D(H)$ such that $D(H)$ and $D(H)/X$ have a common nonzero subobject $Y$.
Then $H=D(H)^{-1}$ and $(D(H)/X)^{-1}=H/X^{-1}$ have a common subobject $Y^0$,
which is nonzero as $Y$ is a nonzero subobject of $D(H)$.
This contradicts the fact that $H$ is a monoform object of $\Mod A^0$.
Thus, $D(H)$ is a monoform object of $\dgMod A$.

(2)
Suppose that $D(H)[n]$ and $D(H')[n']$ are atom-equivalent.
We may assume that $n \ge n'$ and $n=0$.
There exists a common nonzero dg $A$-submodule $X$ of $D(H)$ and $D(H')[n']$.
Then $X^0$ and $X^{-n'}$ are nonzero.
Since $D(H)$ is concentrated in degree $0$ and $-1$, so is $X$.
Thus, we have $n'=0$ as $-n' \ge 0$.
Moreover, $X^0$ is a common nonzero subobject of $H$ and $H'$,
which proves that $H$ and $H'$ are atom-equivalent.
\end{proof}

\begin{prp}\label{prp:Spec dgMod}
Let $A$ be a bounded non-positive right dg noetherian dg ring.
Then there exists a bijection $\Spec(\dgMod A) \iso \Spec(\Mod A^0) \times \bbZ$.
Here, the bijection from the right-hand side to the left-hand side is given by $(\ol{H},n) \mapsto \ol{D(H)[n]}$.
In particular, we have $\Spec^\bbZ(\dgMod A)=\Spec(\Mod A^0)$.
\end{prp}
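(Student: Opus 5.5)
We prove that the map $\Psi\colon \Spec(\Mod A^0)\times\bbZ \to \Spec(\dgMod A)$, $(\ol{H},n)\mapsto \ol{D(H)[n]}$, is well defined, injective, and surjective.

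\emph{Well-definedness and injectivity.} By \cref{prp:D(M) atom} (1), $D(H)$ is monoform, and hence so is every shift $D(H)[n]$. If $H$ and $H'$ are atom-equivalent monoform $A^0$-modules, they have a common nonzero submodule $H''$. Since $D$ is an exact functor preserving injections, $D(H'')[n]$ is then a common nonzero dg submodule of $D(H)[n]$ and $D(H')[n]$. So the class of $D(H)[n]$ depends only on $(\ol{H},n)$. Injectivity is exactly \cref{prp:D(M) atom} (2).

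\emph{Surjectivity.} By \cref{prp:bdd below atom}, whose hypothesis that $A^0$ be right noetherian holds by \cref{prp:nonpos dg noeth}, it suffices to show that every atom of $\dgMod A$ is represented by a bounded-below monoform object. Since $\dgMod A$ is locally noetherian, each atom is represented by a noetherian monoform object, for instance a finitely generated nonzero subobject of a monoform object. Any nonzero subobject of a monoform object is monoform and atom-equivalent to it. So fix a noetherian monoform $M$.

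This is the main step, because a noetherian dg module need not be bounded. Two facts control it.
\begin{itemize}
\item Boundedness of $A$, with $A^i=0$ for $i>0$ and $i<-N$, makes any finitely generated dg module bounded above. Indeed, it is a quotient of a finite sum of copies of $\Cone(\id_A)[n]$, and each of these is bounded.
\item Being bounded below is the issue, and we get it by passing to a subobject. Take the largest $n$ with $M^n\ne 0$. Then $M^n=Z^n(M)$, and the submodule $\tau^{\le n}M$ of \eqref{eq:2-term filt} has nonzero top.
\end{itemize}
Rather than truncating further, take any nonzero homogeneous element $x\in M^n$ and the dg submodule $xA+d(x)A$ it generates. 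Here $d(x)=0$ because $M^{n+1}=0$, so this submodule is $xA$, which lives in degrees $n-N,\dots,n$ and is therefore bounded. It is a nonzero subobject of $M$, hence monoform and atom-equivalent to $M$. Applying \cref{prp:bdd below atom} to $xA$ gives surjectivity.

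\emph{The last assertion.} Shifting by $\bbZ$ acts on the right-hand side via $(\ol{H},n)\mapsto(\ol{H},n+m)$, because $D(H)[n][m]=D(H)[n+m]$. So $\Psi$ is $\bbZ$-equivariant, and passing to orbits gives the bijection $\Spec^{\bbZ}(\dgMod A)\iso\Spec(\Mod A^0)$.
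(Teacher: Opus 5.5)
Your proof is correct and follows essentially the paper's route: pass to a noetherian, hence bounded-below, monoform representative, then use \cref{prp:bdd below atom} (with \cref{prp:nonpos dg noeth}) for surjectivity and \cref{prp:D(M) atom} for well-definedness and injectivity. One remark of yours is wrong, though harmlessly so: a noetherian dg module here is bounded on both sides, and your own first bullet already shows this, since a quotient of a finite sum of the bounded complexes $\Cone(\id_A)[n]$ is bounded (the paper uses exactly this), so the passage to $xA$ is valid but unnecessary.
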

\begin{proof}
Let $H$ be a monoform object of $\dgMod A$.
Since $\dgMod A$ is locally noetherian, there is a nonzero noetherian subobject of $H$.
As $A$ is bounded, every finitely generated object is bounded below.
Thus, $H$ is atom-equivalent to a bounded below dg $A$-module.
Then the claim follows from \cref{prp:bdd below atom,prp:D(M) atom,prp:nonpos dg noeth}.
\end{proof}

\begin{cor}
Let $A$ be a bounded non-positive right dg noetherian dg ring.
Suppose that $A^0$ is commutative.
Then there exists a bijection $\Spec^\bbZ(\dgMod A) \iso \Spec A^0$.\qed
\end{cor}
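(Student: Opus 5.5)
The plan is to combine \cref{prp:Spec dgMod} with Kanda's description of the atom spectrum of a commutative noetherian ring. There are three steps.

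\textbf{Step 1: $A^0$ is a commutative noetherian ring.} Since $A$ is right dg noetherian and non-positive, \cref{prp:nonpos dg noeth} gives that $A^0$ is right noetherian. By hypothesis $A^0$ is commutative, so it is a commutative noetherian ring.

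\textbf{Step 2: the $\bbZ$-orbits.} By \cref{prp:Spec dgMod}, the assignment $(\ol{H},n)\mapsto \ol{D(H)[n]}$ is a bijection $\Spec(\Mod A^0)\times\bbZ \isoto \Spec(\dgMod A)$. The group $\bbZ$ acts on $\Spec(\dgMod A)$ by shifts. Since $D(H)[n][m]=D(H)[n+m]$, this bijection is $\bbZ$-equivariant when $\bbZ$ acts on $\Spec(\Mod A^0)\times\bbZ$ by translation in the second factor. This action is free and transitive on the $\bbZ$-component and trivial on the $\Spec(\Mod A^0)$-component. Hence the $\bbZ$-orbits are exactly the sets $\{\ol{H}\}\times\bbZ$, and passing to the quotient gives a bijection
\[
\Spec^{\bbZ}(\dgMod A)\isoto \Spec(\Mod A^0),\qquad \wti{D(H)[n]}\mapsto \ol{H}.
\]
This is the final assertion of \cref{prp:Spec dgMod}.

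\textbf{Step 3: compose with Kanda's bijection.} By \cite[Proposition 7.2]{Kan-Serre}, as recalled in \S\ref{ss:Mod}, the map $\pp\mapsto \ol{A^0/\pp}$ is a bijection $\Spec A^0\isoto\Spec(\Mod A^0)$ for the commutative noetherian ring $A^0$. Composing this with the bijection of Step 2 gives $\Spec^{\bbZ}(\dgMod A)\iso\Spec A^0$. Explicitly, $\pp$ corresponds to the orbit of $\ol{D(A^0/\pp)}$.

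No step is a real obstacle. The only point needing care is the equivariance in Step 2, which is immediate from the description of the bijection in \cref{prp:Spec dgMod}.
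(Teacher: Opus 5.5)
Your proposal is correct and is exactly the argument the paper leaves implicit (the corollary is marked \qed). It composes the final assertion $\Spec^{\bbZ}(\dgMod A)=\Spec(\Mod A^0)$ of \cref{prp:Spec dgMod} with Kanda's bijection $\Spec A^0\isoto\Spec(\Mod A^0)$, $\pp\mapsto\ol{A^0/\pp}$, from \cite[Proposition 7.2]{Kan-Serre}; your Step 2 re-derives the ``in particular'' clause of that proposition, and your Step 1 supplies the noetherianity of $A^0$ needed to apply Kanda's result as recalled in \S\ref{ss:Mod}.
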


Next, we prove that the bijection $\Spec^\bbZ(\dgMod A) \iso \Spec(\Mod A^0)$ is indeed a homeomorphism.
\begin{lem}\label{prp:filt by D(H) and H}
Let $A$ be a bounded non-positive right dg noetherian dg ring.
Then for any $M\in \dgmod A$,
there exists a filtration of $M$ by dg $A$-submodules
\[
0 = L_0 \subseteq L_1 \subseteq L_2 \subseteq \cdots \subseteq L_m  = M
\]
such that, for each $k\ge 1$, one of the following holds:
\begin{enur}
\item
$L_k/L_{k-1} \iso D(H)[n]$ for some monoform object $H$ in $\Mod A^0$ and $n\in\bbZ$
\item
$L_k/L_{k-1}\iso H'[m]$ for some monoform object $H'$ in $\Mod H^0(A)$ and $m\in \bbZ$.
\end{enur}
\end{lem}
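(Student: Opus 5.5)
We reduce to 2-term pieces and then filter each piece using monoform modules. Start with the truncation filtration $\{T_n\}_{n\in\bbZ}$ of \eqref{eq:2-term filt}. Since $A$ is bounded and $M$ is noetherian, hence finitely generated, $M$ is bounded. So only finitely many subquotients $T_n/T_{n-1}\iso (B^n(M)\inj Z^n(M))$ are nonzero, and this filtration is finite. Refining a finite filtration step by step gives a finite filtration of $M$, so it suffices to treat one 2-term dg $A$-module $N=(X\inj Y)$ with injective main differential. Up to shift, $N$ sits in degrees $-1$ and $0$. By \cref{prp:2-term inj noeth}, $X$ and $Y$ are noetherian $A^0$-modules.

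The basic exact sequence for $N$ is the one from the proof of \cref{prp:2-term inj noeth}(1):
\[
0\to D(X)\to N\to (0\to Y/X)\to 0.
\]
Here $D(X)\to N$ corresponds, via \cref{prp:map from D(M)}, to the inclusion $X=N^{-1}$. This is legitimate because $N^{-1}\cdot A^{<0}\subseteq N^{\le -2}=0$. The quotient $Y/X$ is a dg $A$-module concentrated in degree $0$, that is, an $H^0(A)$-module. It is noetherian, being a quotient of the noetherian $A^0$-module $Y$.

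Now filter each of the two pieces.

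\emph{Degree-$0$ piece.} Every noetherian $H^0(A)$-module has a finite filtration with monoform subquotients; this is the standard consequence of $\Ass$ being nonempty for nonzero noetherian objects in a locally noetherian category, applied in $\Mod H^0(A)$. Modules concentrated in degree $0$ are closed under subobjects and quotients in $\dgMod A$. So this filtration is a filtration by dg submodules with subquotients $H'$ of type (ii).

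\emph{$D(X)$ piece.} The functor $D\colon \Mod A^0\to\dgMod A$ is exact, since it is defined degreewise by $M\mapsto (M\xr{\id}M)$ with a functorial action. Take a finite filtration $0=X_0\subseteq\cdots\subseteq X_r=X$ of $A^0$-modules with monoform subquotients $X_k/X_{k-1}$. Applying $D$ gives submodules $D(X_{k-1})\subseteq D(X_k)$ with $D(X_k)/D(X_{k-1})\iso D(X_k/X_{k-1})$, which are of type (i).

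Finally, concatenate: first the filtration of $D(X)$, then the preimages in $N$ of the filtration of $Y/X$. Shifting everything back gives the required filtration of $M$.

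The main obstacle is checking that $D$ is exact and well defined on $A^0$-submodules, in particular that the action $x\cdot a=x\,d_A(a)$ for $a\in A^{-1}$ preserves $D(X_k)$. It does, because $d_A(a)\in A^0$ and each $X_k$ is an $A^0$-submodule. One must also confirm that the identification of subobjects of $D(X)$ used here is compatible with that description.
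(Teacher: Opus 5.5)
Your proposal is correct and follows essentially the same route as the paper. Both arguments reduce via the truncation filtration to a $2$-term piece with injective main differential, use the exact sequence $0\to D(N^{-1})\to N\to (0\to N^0/N^{-1})\to 0$, and filter its two ends by monoform filtrations: on the left by applying $D$ to a filtration of $N^{-1}$ in $\Mod A^0$, and on the right by viewing the degree-$0$ quotient as an $H^0(A)$-module. Your extra checks only make explicit what the paper leaves implicit, namely that boundedness makes the truncation filtration finite and that $D(N^{-1})\to N$ is well defined because $N^{-1}\cdot A^{<0}=0$.
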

\begin{proof}
As $A$ is bounded, any noetherian dg $A$-module is bounded.
Considering the filtration in \eqref{eq:2-term filt}, it is enough to show that every noetherian $2$-term dg $A$-module $M$ with injective main differential has a desired filtration.
We may assume that $M$ is concentrated in degree $-1$ and $0$.
Both $M^{-1}$ and $H^0(M)=M^0/M^{-1}$ are noetherian $A^0$-modules by \cref{prp:2-term inj noeth}.
We have an exact sequence $0 \to D(M^{-1}) \to M \to H^0(M) \to 0$ by \cref{prp:map from D(M)}.
Because $M^{-1}$ has a filtration $\{X_i\}_i$ of $A^0$-modules such that $X_i/X_{i-1}$ is monoform by \cite[Theorem 2.9]{Kan-Serre},
we obtain the filtration $\{D(X_i)\}_i$ of $D(M^{-1})$ satisfying (i): $D(X_i)/D(X_{i-1})\iso D(X_{i}/X_{i-1})$.
On the other hand, a filtration of $H^0(M)$ given by \cite[Theorem 2.9]{Kan-Serre} yields a filtration of $H^0(M)$ as a dg $A$-module satisfying (ii).
Therefore, we obtain the conclusion.
\end{proof}

\begin{lem}\label{prp:SuppZ of D(H)}
Let $A$ be a bounded non-positive right dg noetherian dg ring.
The following hold under the bijection in \cref{prp:Spec dgMod}.
\begin{enua}
\item
For any monoform object $H$ in $\Mod A^0$,
we have $\Supp^\bbZ_{\dgMod A}(D(H))=\Supp_{\Mod A^0} H$.
\item
For any monoform object $H$ in $\Mod H(A)^0$,
we have $\Supp^\bbZ_{\dgMod A}(H)=\Supp_{\Mod A^0} H$.
\end{enua}
\end{lem}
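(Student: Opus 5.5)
We describe supports via subquotients. Recall that under the bijection of \cref{prp:Spec dgMod}, a point $\ol{K}\in\Spec(\Mod A^0)$ corresponds to the $\bbZ$-orbit of $\ol{D(K)}$, and $\Supp^\bbZ M$ is the image of $\Supp M$. So for (1) we must show: a monoform dg module $D(K)[n]$ (up to atom-equivalence) appears as a subquotient of $D(H)$ if and only if $K$ appears, up to atom-equivalence, as a subquotient of $H$ in $\Mod A^0$. For (2) the same is needed with $D(H)$ replaced by the $H^0(A)$-module $H$ viewed in degree $0$.

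For the inclusion $\supseteq$ in (1), let $\ol{K}\in\Supp H$, so $K$ and some quotient $H/N$ share a nonzero submodule $K'$. The functor $D(-)$ from $\Mod A^0$ to $\dgMod A$ is exact, since it is given degreewise by the identity. Hence $D(H/N)\iso D(H)/D(N)$, and $D(K')\subseteq D(H/N)$ and $D(K')\subseteq D(K)$. Here $D(K')\neq 0$, and $D(K)$ is monoform by \cref{prp:D(M) atom}. This gives $\ol{D(K)}\in\Supp D(H)$.

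For (2), let $H$ be an $H^0(A)$-module in degree $0$. Every subquotient of $H$ is again concentrated in degree $0$, and such objects are $H^0(A)$-modules, hence $A^0$-modules. The proof of \cref{prp:bdd below atom} shows that a monoform module $X$ in degree $0$ is atom-equivalent to $D(X)$. So the monoform subquotients of $H$ in $\dgMod A$ are exactly the monoform subquotients of $H$ as an $A^0$-module. Their images under the bijection are the corresponding points of $\Spec(\Mod A^0)$, which gives (2) directly.

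For $\subseteq$ in (1), the hard direction, take a monoform dg module $Y$ that is atom-equivalent to a subquotient of $D(H)$. By \cref{prp:bdd below atom,prp:D(M) atom}, we may assume $Y$ is a nonzero subobject of some $D(H)/X$, with $Y\subseteq D(H)/X$. As observed in the proof of \cref{prp:D(M) atom}, $X^{-1}\subseteq X^0\subseteq H$. Thus $D(H)/X=(H/X^{-1}\to H/X^0)$ is $2$-term and concentrated in degrees $-1,0$, so $Y$ is too.

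\begin{itemize}
\item If $Y^{-1}\ne 0$ and $d_Y$ is injective, the proof of \cref{prp:bdd below atom} gives $Y\sim D(K)$ with $K\subseteq Y^{-1}\subseteq H/X^{-1}$. So $\ol{K}\in\Supp H$.
\item Otherwise, $Y$ has a nonzero submodule concentrated in a single degree, namely $\Ker d_Y$ in degree $-1$ or $Y$ itself in degree $0$. Passing to it, $Y\sim D(K)$ where $K$ is a monoform $A^0$-submodule of $H/X^{-1}$ or of $H/X^0$, respectively (possibly up to a shift, which the $\bbZ$-quotient ignores). Again $\ol{K}\in\Supp H$.
\end{itemize}

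The main obstacle is this case analysis: one must check that each case really yields an $A^0$-subquotient of $H$, and that the shifts are compatible with passing to $\bbZ$-orbits.
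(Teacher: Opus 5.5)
Your proof is correct. The inclusion $\supseteq$ in (1) and part (2) are argued as in the paper: you use exactness of $D$, and the fact that dg modules concentrated in degree $0$ are closed under subobjects and quotients in $\dgMod A$.

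For the inclusion $\subseteq$ in (1), your route differs from the paper's, which is shorter.
\begin{enumerate}
\item The paper uses the bijection of \cref{prp:Spec dgMod} to write the point as $\wti{D(K)}$. Then some quotient $D(H)/M$ and some shift $D(K)[m]$ share a nonzero subobject $X$.
\item Every nonzero dg submodule of $D(K)$ has nonzero degree-$0$ component; this is the observation at the start of the proof of \cref{prp:D(M) atom}.
\item Hence $X^{-m}$ is a nonzero $A^0$-submodule both of $K$ and of $(D(H)/M)^{-m}=H/M^{-m}$. This gives $\ol{K}\in\Supp_{\Mod A^0}H$ with no case distinction. The two-term shape of $D(H)/M$ only serves to pin down $m$.
\end{enumerate}

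You instead start from an arbitrary monoform subquotient $Y$ of $D(H)$ and determine its atom class by hand. You rerun the argument of \cref{prp:bdd below atom} in two cases: $Y^{-1}\neq 0$ with injective differential, and otherwise, where you pass to $\Ker d_Y$ in degree $-1$ or to $Y$ itself in degree $0$. This works:
\begin{itemize}
\item the cases are exhaustive;
\item $\Ker d^{-1}_Y$ is a dg submodule, since $d(A^0)\subseteq A^1=0$ and $Y^{-1}\cdot A^{<0}\subseteq Y^{<-1}=0$;
\item each case places a monoform $K$ inside $H/X^{-1}$ or $H/X^{0}$, up to a shift that the $\bbZ$-quotient ignores.
\end{itemize}
Your version has the small merit of exhibiting explicitly which $D(K)[n]$ represents $\ol{Y}$. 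However, it duplicates work already packaged in the bijection. Reading off the common subobject in the right degree is what makes the direction you called hard essentially a one-liner.

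Also, the initial reduction to $Y\subseteq D(H)/X$ is just the definition of a subquotient. It does not need \cref{prp:bdd below atom,prp:D(M) atom}.
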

\begin{proof}
(1)
Let $K$ be a monoform object of $\Mod A^0$.
If $\wti{D(K)}\in \Supp^\bbZ_{\dgMod A}(D(H))$,
then there is a subobject $M$ of $D(H)$ such that $D(H)/M$ and $D(K)[m]$ have a nonzero common subobject $X$ for some $m\in \bbZ$.
Then $X^{-m}$ is a nonzero common subobject of $(D(H)/M)^{-m}$ and $K$.
Thus, we have $m=-1$ or $m=0$ since $D(H)/M$ is concentrated in degree $-1$ and $0$.
Therefore, $X^{-m}$ is a monoform subobject of $H/M^{-m}$ that is atom-equivalent to $K$.
This means that $\ol{K} \in \Supp_{\Mod A^0} H$.
Conversely, suppose that $\ol{K} \in \Supp_{\Mod A^0} H$.
There is a subobject $N$ of $H$ such that $H/N$ and $K$ have a nonzero common subobject $Y$.
Then $D(H)/D(N)$ and $D(K)$ have a nonzero common subobject $D(Y)$.
This means that $\ol{D(K)}\in \Supp_{\dgMod A} (D(H))$.

(2) 
It follows from the fact that $\Mod H^0(A)$ is closed under subobjects and quotients as a subcategory of $\dgMod A$.
\end{proof}

\begin{cor}\label{prp:compare Supp dgMod}
Let $A$ be a bounded non-positive right dg noetherian dg ring.
\begin{enua}
\item
For any $M \in \dgMod A$, we have $\Supp^\bbZ_{\dgMod A}(M)=\Supp_{\Mod A^0} M$,
where, on the right-hand side, $M$ is regarded as an $A^0$-module by forgetting the differential and the grading.
\item
The bijection $\Spec^\bbZ(\dgMod A)=\Spec(\Mod A^0)$ in \cref{prp:Spec dgMod} is a homeomorphism.
\end{enua}
\end{cor}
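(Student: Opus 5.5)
Part (1) comes first, and part (2) then follows formally. For (1) I will prove the equality $\Supp^\bbZ_{\dgMod A}(M)=\Supp_{\Mod A^0}M$ first for noetherian $M$ and then extend it to arbitrary $M$. Let $M\in\dgmod A$. \cref{prp:filt by D(H) and H} gives a finite filtration $0=L_0\subseteq\cdots\subseteq L_m=M$ whose factors are of the form $D(H)[n]$ with $H$ monoform in $\Mod A^0$, or $H'[m]$ with $H'$ monoform in $\Mod H^0(A)$. On the left-hand side, \eqref{eq:SuppG AssG formula} gives $\Supp^\bbZ M=\bigcup_k\Supp^\bbZ(L_k/L_{k-1})$. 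On the right-hand side, the forgetful functor $\dgMod A\to\Mod A^0$ is exact, so the underlying $A^0$-modules form a filtration with factors $L_k/L_{k-1}$, and $\Supp_{\Mod A^0}$ is again the union over the factors. It therefore suffices to treat a single factor. Shifting changes neither side: on the left the shift is the $\bbZ$-action, and on the right only the grading is forgotten. For a factor $D(H)$, the underlying $A^0$-module is $H\oplus H$, so $\Supp_{\Mod A^0}D(H)=\Supp_{\Mod A^0}H=\Supp^\bbZ D(H)$ by \cref{prp:SuppZ of D(H)}(1). For a factor $H'$, the statement is \cref{prp:SuppZ of D(H)}(2).

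For arbitrary $M\in\dgMod A$, I write $M$ as the directed union of its noetherian dg submodules $M_\lambda$; this is possible because $\dgMod A$ is locally noetherian. Atom supports commute with directed unions on both sides. I would derive this from the coproduct formula in \eqref{eq:SuppG AssG formula}: $M$ is a quotient of $\bigoplus_\lambda M_\lambda$, and every $M_\lambda$ is a subobject of $M$. Forgetting the dg structure sends the union to the union of the underlying $A^0$-submodules, so the equality passes from the $M_\lambda$ to $M$.

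For (2), the bijection of \cref{prp:Spec dgMod} identifies $\Spec^\bbZ(\dgMod A)$ with $\Spec(\Mod A^0)$. The sets $\Supp^\bbZ M$ for $M\in\dgMod A$ form an open basis of the left-hand side, and by (1) each of them corresponds to the open set $\Supp_{\Mod A^0}M$, so the bijection is open. For continuity I need, for each $A^0$-module $N$, some dg module $M$ with $\Supp^\bbZ M=\Supp_{\Mod A^0}N$. The candidate is $M=N\otimes_{A^0}A$, a right dg $A$-module whose differential is induced from $d_A$. Since $A$ is bounded and non-positive, its underlying $A^0$-module $N\otimes_{A^0}A=\bigoplus_i N\otimes_{A^0}A^i$ has $N=N\otimes_{A^0}A^0$ as a direct summand, so $\Supp_{\Mod A^0}N\subseteq\Supp_{\Mod A^0}M$.

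The main obstacle is the reverse inclusion, equivalently showing that $\Supp_{\Mod A^0}(N\otimes_{A^0}A^i)\subseteq\Supp_{\Mod A^0}N$ for every $i$. The first thing to try is $D(N)$ instead, whose underlying module $N\oplus N$ gives the equality immediately; however, $D(N)$ is only defined for right $A^0$-modules on which $A^{<0}$ acts as required by \cref{prp:map from D(M)}, so this has to be checked. If it fails, I would argue the reverse inclusion through atoms: each $\ol{K}\in\Supp(N\otimes_{A^0}A^i)$ lies in the support of a quotient of a direct sum of copies of $N$, using a surjection $(A^0)^{(J)}\surj A^i$ and tensoring it with $N$. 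This works cleanly when $A^i$ is a bimodule generated by central elements, which holds for instance in the skew-commutative case. In general the noncommutativity on the left is the delicate point.
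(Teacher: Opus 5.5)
Part (1) is fine and is the paper's argument: the filtration of \cref{prp:filt by D(H) and H}, \cref{prp:SuppZ of D(H)} on each factor, and the union formulas \eqref{eq:SuppG AssG formula}. Your passage from noetherian to arbitrary $M$ via directed unions is a correct step that the paper leaves implicit. The openness half of (2) is also fine.

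The gap is continuity. You never produce, for an arbitrary $A^0$-module $N$, a dg module $M$ with $\Supp^\bbZ_{\dgMod A}M=\Supp_{\Mod A^0}N$. Your main candidate $N\otimes_{A^0}A$ does not merely resist proof; it fails outside the ``central'' case. Take $A=A^0\oplus A^{-1}$ with zero differential, $A^0=\bbk\times\bbk$, and $A^{-1}=A^0$ with its right action twisted by the swap automorphism. This $A$ is bounded, non-positive and right dg noetherian. For $N=\bbk\times 0$ one gets $N\otimes_{A^0}A^{-1}\iso 0\times\bbk$. Hence the support of $N\otimes_{A^0}A$ consists of both points of $\Spec(\Mod A^0)$, while $\Supp_{\Mod A^0}N$ is a single point.

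The obstacle disappears once you read the definition of $D(-)$ correctly. An $A^0$-module $N$ carries no $A^{<0}$-action that needs checking. On $D(N)$, the action of $a\in A^{-1}$ on degree $0$ is \emph{defined} as $x\mapsto x\,d_A(a)$, and all other actions of $A^{<0}$ are zero. With this, $D(M)$ is a dg $A$-module for every $A^0$-module $M$; the verification uses only $d_A(A^0)=0$ (which holds as $A^1=0$) and the Leibniz rule. The condition $u(M)\cdot A^{<0}=0$ in \cref{prp:map from D(M)} constrains morphisms $D(M)\to X$, not the existence of $D(M)$.

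So your first idea is the right one. The underlying $A^0$-module of $D(N)$ is $N\oplus N$, and part (1), which you proved for all dg modules, gives $\Supp^\bbZ_{\dgMod A}D(N)=\Supp_{\Mod A^0}N$. Thus the two open bases correspond under the bijection, which is exactly the paper's one-line proof of (2). Equivalently, write $\Supp_{\Mod A^0}N$ as a union of sets $\Supp_{\Mod A^0}H$ with $H$ monoform, using monoform filtrations of noetherian submodules, and apply \cref{prp:SuppZ of D(H)}(1) to each $D(H)$.
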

\begin{proof}
(1) follows from \cref{prp:filt by D(H) and H,prp:SuppZ of D(H)} and \eqref{eq:SuppG AssG formula}.
(2) follows from (1), since the respective open bases correspond to each other. 
\end{proof}

Finally, we compare $\Ass_{\dgMod A} M$ and $\Ass_{\Mod A^0} M$.
In particular, we will see that $\dgMod A$ satisfies the condition ($*$) of \cref{prp:Ass^G X controlls X}.
\begin{prp}\label{prp:compare Ass dgMod}
Let $R$ be a bounded non-positive skew-commutative dg noetherian dg ring.
For any $M\in \dgMod R$,
we have $\Ass^{\bbZ}_{\dgMod R} M = \Ass_{\Mod R^0} M$,
where, on the right-hand side, $M$ is regarded as an $R^0$-module by forgetting the differential and the grading.
\end{prp}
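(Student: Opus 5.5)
We prove the two inclusions separately, identifying $\Spec^\bbZ(\dgMod R)$ with $\Spec(\Mod R^0)=\Spec R^0$ via \cref{prp:Spec dgMod}. Under this identification $\wti{D(H)[n]}\mapsto \ol{H}$. By \cref{prp:bdd below atom} and the proof of \cref{prp:Spec dgMod}, a monoform $H'\in \Mod H^0(R)$ placed in one degree corresponds to $\ol{H'}$ as an $R^0$-module: we saw $X\inj D(X)$. Since $R$ is skew-commutative, $R^0$ is commutative noetherian. Hence $\Ass_{\Mod R^0}$ is the usual set of associated primes, and $\ol{R^0/\pp}$ in $\Ass_{\Mod R^0}M$ means there is $m\in M^n$ homogeneous with $\mathrm{ann}_{R^0}(m)=\pp$. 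Both sides commute with directed unions of submodules, so we may reduce to $M$ noetherian, hence bounded.

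\emph{Inclusion $\subseteq$.} Take a monoform dg submodule $X\subseteq M$. By \cref{prp:bdd below atom} and the reduction in its proof, $X$ contains a nonzero dg submodule $Y$ which, up to shift, is either $D(H)$ with $H\subseteq X^{-1}$ monoform, or is concentrated in one degree and monoform over $R^0$. In either case $Y$ has a nonzero $R^0$-submodule in a single degree that is atom-equivalent to $H$. This is $Y^{-1}=H$, respectively $Y$ itself. That submodule is an $R^0$-submodule of $M$. Therefore $\wti{X}\mapsto \ol{H}\in \Ass_{\Mod R^0}M$.

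\emph{Inclusion $\supseteq$.} This is the real content. Given $\pp\in\Ass_{R^0}M$, pick a homogeneous $m\in M^n$ with $\mathrm{ann}_{R^0}(m)=\pp$. We need a dg submodule of $M$ that is monoform of class $\wti{D(R^0/\pp)}$. The plan is to choose $m$ extremal: take $n$ \emph{maximal} among degrees admitting an element with annihilator exactly $\pp$. This is possible because $M$ is bounded, but extremality must be arranged more carefully, as follows. Consider $N:=\{x\in M \mid \pp x=0\}$, a dg submodule since $\pp\subseteq R^0$ is central by skew-commutativity and $d$ is $R^0$-linear up to the Leibniz term, which needs checking. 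Localizing at $\pp$, i.e.\ using $R^0_\pp$-vector space dimension counts in $N_\pp$, we look for an element $m$ in the top degree of $N_\pp\ne0$ with $d(m)$ and $m\cdot R^{<0}$ controlled.

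Case one: some such $m$ has $d(m)\ne 0$ with $\mathrm{ann}(dm)=\pp$. Then use the top degree, where $d=0$ automatically and $m R^{<0}$ lands in lower degrees. Case two: $m$ is a cycle killed by $R^{<0}\cdot$ appropriately, so that $mR^0\cong R^0/\pp$ is itself a dg submodule of the form $H'[k]$, or $mR+d(mR)$ is isomorphic to a submodule of $D(R^0/\pp)[k]$ via \cref{prp:map from D(M)}. In each case we get a dg submodule atom-equivalent to $D(R^0/\pp)[k]$, so the class lies in $\Ass^\bbZ$.

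The main obstacle is this last construction. We must find a homogeneous element with annihilator $\pp$ generating a dg submodule whose every nonzero subobject still sees $\pp$. The nilpotent action of $R^{<0}$, which lowers degree, must be killed. I would handle it by descending induction on degree, using that $R$ is bounded: multiply $m$ repeatedly by elements of $R^{<0}$ or $d(R^{-1})$ while the annihilator stays $\pp$. After finitely many steps we reach $m'$ with $m'R^{<0}\subseteq$ the $\pp$-torsion-free-killed part. Commutativity of $R^0$ makes the annihilator computation work. An alternative, if this proves fiddly, is to deduce $\supseteq$ from condition ($*$) by localizing at $\pp$ and comparing with \cref{prp:compare Supp dgMod}.
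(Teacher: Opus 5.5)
Your inclusion $\Ass^{\bbZ}_{\dgMod R}M\subseteq\Ass_{\Mod R^0}M$ is fine and essentially matches the paper. The paper takes a common nonzero subobject $X$ of $M$ and $D(H)[n]$ and uses that $X^0$ is a nonzero $R^0$-submodule of $H$ lying in $M$; you reach the same conclusion through the reduction in \cref{prp:bdd below atom}.

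The reverse inclusion is not proved. You correctly identify what is needed: a nonzero $R^0$-submodule $K\subseteq M^n$ of class $\pp$ with $K\cdot R^{<0}=0$. None of your routes produces it.
\begin{enumerate}
\item Taking $n$ maximal, or the top degree of $N_\pp$, controls the differential (which raises degree), not the action of $R^{<0}$ (which lowers it). The differential needs no control anyway. Once $KR^{<0}=0$, either $K':=K\cap Z^n(M)\neq0$ is a dg submodule concentrated in degree $n$, of class $\wti{D(K')}$, or $d^n|_K$ is injective and $D(K)[-n-1]\inj M$ by \cref{prp:map from D(M)}.
\item Your descent ``multiply by $R^{<0}$ while the annihilator stays $\pp$'' gets stuck as soon as $\mathrm{ann}(ma)\supsetneq\pp$, and you do not say what to do then. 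For example, take $R=k[x]\oplus k\epsilon$ with $|\epsilon|=-1$, $x\epsilon=\epsilon^2=0$, $d=0$, $M=R$ and $\pp=(0)$. The element $m=1$ has $m\epsilon=\epsilon$ with annihilator $(x)$, whereas the right $K$ is $xk[x]\subseteq M^0$.
\item The stopping condition ``$m'R^{<0}\subseteq$ the $\pp$-torsion-free-killed part'' has no definite meaning.
\item The fallback via ($*$) is circular: the paper derives ($*$) for $\dgMod R$ from this very proposition together with \cref{prp:compare Supp dgMod}.
\end{enumerate}

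The missing idea is to work in the \emph{lowest} degree and use uniformity of monoform objects.
\begin{enumerate}
\item Take $n$ minimal with $\pp\in\Ass_{R^0}M^n$; this is possible since $M$ is bounded below. Take a monoform $H\subseteq M^n$ of class $\pp$.
\item For homogeneous $a\in R^{-d}$ with $d>0$, the map $r_a\colon H\to M^{n-d}$, $x\mapsto xa$, is $R^0$-linear by skew-commutativity. It cannot be injective, by minimality of $n$.
\item $R^{<0}$ is generated over $R^0$ by finitely many homogeneous $a_1,\dots,a_k$ (\cref{prp:nonpos dg noeth}), and nonzero submodules of a monoform object intersect nontrivially. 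Hence $K:=\bigcap_i\Ker r_{a_i}$ is a nonzero monoform $R^0$-module of class $\pp$ with $KR^{<0}=0$, and the dichotomy above concludes.
\end{enumerate}

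Alternatively, your element-wise descent can be completed with one extra step.
\begin{enumerate}
\item If $\mathrm{ann}(ma_i)=\pp$ for some $i$, replace $m$ by $ma_i$; the degree drops.
\item Otherwise, for each $i$ with $ma_i\neq0$ choose $s_i\in\mathrm{ann}(ma_i)\setminus\pp$, and replace $m$ by $ms$ with $s=\prod_i s_i$. Then $\mathrm{ann}(ms)=\pp$ because $\pp$ is prime, and $msa_i=ma_is=0$.
\end{enumerate}
Boundedness of $M$ makes this terminate with $mR^{<0}=0$, and then $K=mR^0\cong R^0/\pp$ works.
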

\begin{proof}
Since $\dgMod R$ is locally noetherian,
it is enough to show that the equality holds for noetherian objects in $\dgMod R$.

Take $x \in \Ass^{\bbZ}_{\dgMod R} M$ and suppose that $x=\wti{D(H)}$ for some monoform $R^0$-module $H$.
Then there is a monoform subobject $X$ of $M$ which is also a subobject of $D(H)[n]$.
Replacing $M$ with its shift, we may assume $n=0$.
Since $X$ is a subobject of $D(H)$, it is a $2$-term dg $R$-module with injective main differential.
As $X$ is nonzero, so is $X^{0}$.
Thus, $\ol{X^{0}} = \ol{H}$ as $R^0$-modules, and $\ol{H} \in \Ass_{\Mod R^0} M$.

Conversely, take $\alpha \in \Ass_{\Mod R^0} M$.
Since $M= \bigoplus_{n\in \bbZ} M^n$ as an $R^0$-module,
there is some $n\in \bbZ$ such that $\alpha \in \Ass_{\Mod R^0} M^n$.
Assume that $n$ is the smallest one, which exists as $M$ is bounded below.
There is a monoform $R^0$-submodule $H$ of $M^n$ such that $\ol{H}=\alpha$.
For any $d>0$ and any homogeneous element $a\in R^{-d}$,
consider an $R^0$-linear map $r_a \colon H \to M^{n-d}$ given by $r_a(x):=xa$.
If $r_a$ is injective for some $a$, it contradicts the minimality of $n$.
Thus, $\Ker(r_a)$ is a nonzero $R^0$-submodule of $H$.
Since $R^{<0}$ is finitely generated as an $R^0$-module by \cref{prp:nonpos dg noeth},
let $a_1,\dots, a_k$ be homogeneous generators of $R^{<0}$.
Then $K:=\bigcap_{i=1}^k \Ker(r_{a_i})$ is a nonzero submodule of $H$ by \cite[Proposition 2.6]{Kan-Serre}.
Thus, $K$ is a monoform $R^0$-module of $M^n$ such that $\ol{K}=\alpha$ and $K\cdot R^{<0}=0$.
If $K\cap Z^n(M) \ne 0$, then $K':=K\cap Z^n(M)$ can be viewed as a dg $R$-submodule of $M$,
and $\wti{D(H)}=\wti{D(K')}=\wti{K'} \in \Ass^\bbZ_{\dgMod R}(M)$.
If $K\cap Z^n(M) =0$, then the composite $K \inj M^n \to M^{n+1}$ of the inclusion map and the differential map $d^n$ is injective.
Thus, we obtain an injection $D(K)[-n-1] \inj M$ of dg $R$-modules by \cref{prp:map from D(M)}.
Therefore, we have $\wti{D(H)}=\wti{D(K)} \in \Ass^\bbZ_{\dgMod R}(M)$.
\end{proof}

\begin{cor}
Let $R$ be a bounded non-positive skew-commutative dg noetherian dg ring.
There is an order-preserving bijection between the following sets:
\begin{itemize}
\item
The set of torsion-free classes of $\dgmod R$ closed under shifts.
\item
The power set of $\Spec R^0$.
\end{itemize}
\end{cor}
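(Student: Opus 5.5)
The corollary should follow by applying \cref{prp:classify G-torf} to $\catA=\dgMod R$ with $G=\bbZ$ acting by shifts, and then translating the index set via \cref{prp:Spec dgMod}. So the work consists of checking the hypotheses of \cref{prp:classify G-torf}, identifying the $G$-closed subcategories, and identifying $\Spec^\bbZ(\dgMod R)$ with $\Spec R^0$.

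First, the categorical hypotheses. Since $R$ is bounded, non-positive and dg noetherian, $\dgMod R$ is a locally noetherian Grothendieck cosmos. With $\calL=\{R[n]\}_{n\in\bbZ}$ it satisfies \cref{setup:classify subcat}, as noted before the previous corollary. By the lemma at the start of \S\ref{s:classify subcat}, this gives (C1), (C2), $\Loc_\otimes=\Loc_{G_\calL}$ and $\torf_{\Hom}=\torf_{G_\calL}$. The shift action of $\bbZ$ factors through the surjection $\bbZ\surj G_\calL$. Hence $\bbZ$-closed and $G_\calL$-closed subcategories coincide, and so do the orbit spectra $\Spec^\bbZ$ and $\Spec^{G_\calL}$, since orbits are the same. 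We may therefore work with $G=\bbZ$, and $G$-closed torsion-free classes of $\dgmod R$ are exactly the torsion-free classes closed under shifts.

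The main step is condition ($*$), which I would deduce from the commutative case via \cref{prp:compare Supp dgMod} and \cref{prp:compare Ass dgMod}. Take $M\in\dgMod R$ and $x\in\Supp^\bbZ M$. By \cref{prp:compare Supp dgMod}(1), $x$ lies in $\Supp_{\Mod R^0}M$, where $M$ is viewed as an $R^0$-module; here $R^0$ is commutative noetherian by \cref{prp:nonpos dg noeth}. For modules over a commutative noetherian ring, ($*$) holds (\S\ref{ss:Mod}), so there is $y\in\Ass_{\Mod R^0}M$ with $y\in\ol{\{x\}}$ in $\Spec(\Mod R^0)$. By \cref{prp:compare Ass dgMod}, $y\in\Ass^\bbZ_{\dgMod R}M$. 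Since \cref{prp:compare Supp dgMod}(2) says the identification $\Spec^\bbZ(\dgMod R)=\Spec(\Mod R^0)$ is a homeomorphism, closures are preserved, and ($*$) follows.

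The point needing most care is that the identifications in \cref{prp:compare Supp dgMod} and \cref{prp:compare Ass dgMod} are made through the same bijection of \cref{prp:Spec dgMod}. Both send $\wti{D(H)}$ to $\ol{H}$, and this should be checked in passing. Once that is done, \cref{prp:classify G-torf} gives an order-preserving bijection between shift-closed torsion-free classes of $\dgmod R$ and subsets of $\Spec^\bbZ(\dgMod R)\iso\Spec(\Mod R^0)\iso\Spec R^0$. The last isomorphism is \cite[Proposition 7.2]{Kan-Serre}, and it yields the statement.
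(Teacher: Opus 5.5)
Your proposal is correct and follows the paper's own argument. Like the paper, you verify ($*$) for $\dgMod R$ by transporting it from $\Mod R^0$ through \cref{prp:compare Supp dgMod} and \cref{prp:compare Ass dgMod}, using that the identification is a homeomorphism so closures correspond, and then apply \cref{prp:classify G-torf} with $\Spec^\bbZ(\dgMod R)\iso\Spec R^0$. Your extra checks are details the paper leaves implicit: that the $\bbZ$-action and the $G_\calL$-action have the same closed subcategories and orbits, and that both comparison results use the same bijection $\wti{D(H)}\leftrightarrow\ol{H}$.
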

\begin{proof}
Since $\Mod R^0$ satisfies the condition ($*$) of \cref{prp:Ass^G X controlls X},
so does $\dgMod R$ by \cref{prp:compare Supp dgMod,prp:compare Ass dgMod}.
Thus, we obtain the assertion from \cref{prp:classify G-torf}.
\end{proof}

Finally, we give an example of bounded non-positive skew-commutative dg noetherian dg rings.
\begin{ex}
Let $R$ be a commutative noetherian ring.
\begin{enua}
\item
For a bounded complex $V$ of finitely generated $R$-modules such that $V^i=0$ for $i\ge 0$,
we define a dg ring $R \ltimes V$ as follows:
\begin{itemize}
\item 
As a complex, $R \ltimes V$ is a direct sum $R\oplus V$ of complexes.
\item
The composite is defined by $(a,v)\cdot(b,w):= (ab,vb+wa)$ for $(a,v),(b,w) \in R\oplus V$.
\end{itemize}
Then $R \ltimes V$ is a bounded non-positive skew-commutative dg noetherian dg ring.
\item
Let $M$ be a finitely generated $R$-module, and let $\varphi \colon M \to R$ be an $R$-linear map.
Then the Koszul complex $K(\varphi)$ associated to $\varphi$ is a non-positive skew-commutative dg ring
(cf.\ \cite[\href{https://stacks.math.columbia.edu/tag/0622}{Tag 0622}]{SP}).
As $M$ is finitely generated, the dg ring $K(\varphi)$ is bounded, and it is dg noetherian.
\end{enua}
\end{ex}

\subsection{Examples outside our assumptions}\label{ss:non ex}
In this final subsection, we give examples of Grothendieck cosmoi that lie outside the scope of \cref{setup:classify subcat}.
However, we will see that tensor torsion classes are Serre subcategories in these Grothendieck cosmoi.
These examples indicate that the assertion that tensor torsion classes are Serre subcategories holds in a more general setting.

\subsubsection{Quiver representations}
We refer to \cite{ASS} for the basics of quiver representations.
In what follows,
let $Q$ be a finite acyclic quiver and $R$ a commutative ring.
We denote by $Q_0$ the set of vertices of $Q$ and $Q_1$ the set of arrows of $Q$.
An \emph{$R$-linear representation} of $Q$ is a pair $(\{M_i\}_{i\in Q_0}, \{M_a\}_{a\in Q_1})$
such that $M_i$ is an $R$-module for each $i\in Q_0$ and $M_a$ is an $R$-linear map $M_i \to M_j$ for each arrow $a\colon i\to j$ in $Q_1$.
Let $RQ$ denote the path algebra of $Q$ over $R$.
We identify $RQ$-modules with $R$-linear quiver representations of $Q$.
We denote by $e_i$ the idempotent of $RQ$ corresponding to the vertex $i$.
If $R$ is noetherian, so is $RQ$ as it is a free $R$-module of finite rank.
Thus, $\Mod RQ$ is a locally noetherian Grothendieck category.

For $M, N \in \Mod RQ$, define their \emph{pointwise tensor product} $M\otimes N \in \Mod RQ$ by
\[
(M\otimes N)_i := M_i \otimes_R N_i \; (i\in Q_0),\quad
(M\otimes N)_a := M_a \otimes_R N_a \; (a\in Q_1).
\]
Then $\Mod RQ$ is a symmetric monoidal closed category.
Its unit object $I_{\Mod RQ}$ is given by
\[
(I_{\Mod RQ})_i := R \; (i\in Q_0),\quad
(I_{\Mod RQ})_a := \id_R \; (a\in Q_1).
\]
Its internal Hom $[M,N]\in \Mod RQ$ is given by
\begin{align*}
[M, N]_i := \Hom_{RQ}(M \otimes P(i) , N)  \; (i\in Q_0) ,\quad
[M, N]_a := \Hom_{RQ}(M \otimes (a\cdot-), N)  \; (a\in Q_1).
\end{align*}
Here, $P(i):=e_i RQ$ is the projective $RQ$-module corresponding to the vertex $i$,
and $a\cdot- \colon P(j) \to P(i)$ is the left multiplication by an arrow $a\colon i \to j$.
We refer \cite[Section 5]{Day} for an account of this monoidal structure in a more general setting.
Thus, $\Mod RQ$ is a Grothendieck cosmos satisfying (C1).
However, in general, $\Mod RQ$ does not satisfy (C2), that is, it is not generated by a set of dualizable objects.
\begin{lem}
The dualizable objects of $\Mod RQ$ are precisely the $R$-linear representations $D$ of $Q$
such that $D_i$ is a finitely generated projective $R$-module for any $i\in Q_0$ 
and $D_a$ is an isomorphism for any $a\in Q_1$. 
\end{lem}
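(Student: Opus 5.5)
The plan is to work directly with the characterization of dualizability by unit and counit morphisms $\eta\colon I\to D\otimes E$ and $\epsilon\colon E\otimes D\to I$ satisfying \eqref{diag:characterize dual}. Since the pointwise tensor product, the unit and all morphisms in $\Mod RQ$ are computed vertex by vertex, such data in $\Mod RQ$ amounts to two things. First, a family of unit and counit maps $\eta_i\colon R\to D_i\otimes_R E_i$ and $\epsilon_i\colon E_i\otimes_R D_i\to R$ satisfying the triangle identities in $\Mod R$ for each $i\in Q_0$. Second, the naturality conditions along arrows: for each $a\colon i\to j$,
\[
(D_a\otimes E_a)\circ\eta_i=\eta_j,\qquad \epsilon_j\circ(E_a\otimes D_a)=\epsilon_i .
\]
These use $I_a=\id_R$.

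\emph{Only if.} Let $D$ be dualizable with dual $E$ and data $(\eta,\epsilon)$. The evaluation functor $M\mapsto M_i$ is strong monoidal $\Mod RQ\to\Mod R$, so $D_i$ is dualizable in $\Mod R$. By the discussion in \S\ref{ss:Mod}, $D_i$ is therefore a finitely generated projective $R$-module.

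For an arrow $a\colon i\to j$, I will write down an explicit inverse of $D_a$. Define $g\colon D_j\to D_i$ as the composite
\[
D_j\iso R\otimes D_j\xr{\eta_i\otimes 1} D_i\otimes E_i\otimes D_j\xr{1\otimes E_a\otimes 1} D_i\otimes E_j\otimes D_j\xr{1\otimes\epsilon_j} D_i .
\]
To see that $g\circ D_a=\id$, move $D_a$ inside the composite, which gives $(1\otimes\epsilon_j)(1\otimes E_a\otimes D_a)(\eta_i\otimes 1)$. The counit compatibility turns this into $(1\otimes\epsilon_i)(\eta_i\otimes1)$, which is $\id_{D_i}$ by the triangle identity at $i$.

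To see that $D_a\circ g=\id$, use the interchange law to write $D_a\circ g=(1\otimes\epsilon_j)\bigl(((D_a\otimes E_a)\eta_i)\otimes 1\bigr)$. The unit compatibility turns this into $(1\otimes\epsilon_j)(\eta_j\otimes1)$, which is $\id_{D_j}$ by the triangle identity at $j$. Hence each $D_a$ is an isomorphism.

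\emph{If.} Suppose each $D_i$ is finitely generated projective and each $D_a$ is an isomorphism. Define $E$ by $E_i:=\Hom_R(D_i,R)$ and, for $a\colon i\to j$, $E_a:=\bigl(\Hom_R(D_a,R)\bigr)^{-1}\colon E_i\to E_j$; this is the inverse transpose, which exists precisely because $D_a$ is invertible. Take $\eta_i$ and $\epsilon_i$ to be the usual coevaluation and evaluation for the finitely generated projective module $D_i$. The triangle identities then hold vertexwise, because they hold in $\Mod R$.

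It remains to check the two compatibilities along arrows. The counit compatibility is the identity $\langle (D_a^{*})^{-1}\varphi, D_a x\rangle=\langle\varphi,x\rangle$. The unit compatibility says that $D_a\otimes(D_a^*)^{-1}$ carries the canonical element of $D_i\otimes D_i^*$ to that of $D_j\otimes D_j^*$. This follows from the canonical isomorphism $D\otimes D^*\iso\Hom_R(D,D)$, under which the canonical element is $\id$ and $D_a\otimes(D_a^*)^{-1}$ acts by conjugation $f\mapsto D_afD_a^{-1}$.

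The main obstacle is the ``only if'' statement that $D_a$ is invertible. The point is that the arrow compatibilities of $\eta$ and $\epsilon$ manufacture the inverse $g$ above, and checking this carefully is the heart of the proof; everything else is vertexwise and reduces to the module case of \S\ref{ss:Mod}.
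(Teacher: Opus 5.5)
Your argument is correct and follows the paper's proof: the paper builds exactly your map $g\colon D_j\to D_i$ from $\eta_i$, $E_a$ and $\epsilon_j$, checks $g\circ D_a=\id$ via the triangle identity and the counit compatibility, and covers the other composite by ``the dual argument,'' which is your unit-compatibility computation. The paper only writes out the ``only if'' direction, so your explicit dual with $E_a=(D_a^{*})^{-1}$, evaluation and coevaluation supplies the converse that the paper leaves unstated.
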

\begin{proof}
Let $D$ be a dualizable object of $\Mod RQ$.
As the evaluation functor $(-)_i \colon \Mod RQ \to \Mod R$ is strong monoidal by definition,
the $R$-module $D_i$ is also dualizable, and hence, it is finitely generated projective for any $i\in Q_0$.

Let $E$ be the dual of $D$, and let $\eta \colon I \to D\otimes E$ and $\epsilon \colon E\otimes D \to I$ be morphisms satisfying \eqref{diag:characterize dual}.
Then we have the following commutative diagrams in $\catmod R$ for each arrow $a\colon i \to j$ in $Q$:
\begin{equation}\label{diag:dualizable in Mod RQ}
\begin{tikzcd}[row sep=20pt,column sep=40pt]
D_j \ar[r,"\iso"] & R \otimes_R D_j \ar[r,"\eta_i\otimes \id_{D_j}"] & D_i \otimes_R E_i \otimes_R D_j \ar[rd,"\id_{D_i} \otimes E_a \otimes \id_{D_j}"] & \\
D_i \ar[r,"\iso"] \ar[u,"D_a"] & R \otimes_R D_i \ar[r,"\eta_i\otimes \id_{D_i}"] \ar[u,"\id_R \otimes D_a"] \ar[rd,"\iso"',"(1)"] & D_i \otimes_R E_i \otimes_R D_i \ar[u,"\id_{D_i} \otimes \id_{E_i} \otimes D_a"] \ar[r,"\id_{D_i} \otimes E_a \otimes D_a"] \ar[d,"\id_{D_i} \otimes \epsilon_i"] & D_i \otimes_R E_j \otimes_R D_i \ar[ld,"\id_{D_i}\otimes \epsilon_j","(2)"']\\
& & D_i\otimes_R R & .
\end{tikzcd}
\end{equation}
Here, the commutativity of (1) follows from \eqref{diag:characterize dual}.
The commutativity of (2) follows from the following commutative diagram,
which is obtained by the morphism $\epsilon \colon E \otimes D \to I$ of quiver representations:
\[
\begin{tikzcd}
E_i \otimes_R D_i \ar[r,"\epsilon_i"] \ar[d,"E_a \otimes D_a"] & R \ar[d,equal] \\
E_j \otimes_R D_j \ar[r,"\epsilon_j"] & R. \\
\end{tikzcd}
\]
The morphism $D_a$ is a split monomorphism by the commutative diagram \eqref{diag:dualizable in Mod RQ}.
By the dual argument, we see that $D_a$ is a split epimorphism.
Therefore, the $R$-linear map $D_a\colon D_i \to D_j$ is an isomorphism for any arrows $a\colon i \to j $ in $Q$.
\end{proof}

Even though $\Mod RQ$ does not satisfy (C2) and therefore does not satisfy the assumptions of \cref{setup:classify subcat},
the following still holds.
\begin{prp}
Let $R$ be a commutative noetherian ring.
\begin{enua}
\item
Every tensor torsion class in $\catmod RQ$ is a Serre subcategory.
\item
There is a bijection
\[
\serre_{\otimes}(\catmod RQ) \isoto \prod_{i\in Q_0} \serre(\catmod R),\quad
\catX \mapsto (\catX_i)_{i\in Q_0},
\]
where $\catX_i$ is defined by \eqref{eq:tensor tors is Serre in mod RQ} below.
\end{enua}
\end{prp}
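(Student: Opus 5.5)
The plan is to reduce everything to the vertexwise categories $\catmod R$, where \cref{prp:tors=Serre in mod R} is available. I take \eqref{eq:tensor tors is Serre in mod RQ} to be
\[
\catX_i := \{N \in \catmod R \mid N^{(i)} \in \catX\}.
\]
Here, for an $R$-module $N$ and a vertex $i\in Q_0$, $N^{(i)}\in \Mod RQ$ is the representation with $N$ at $i$, zero at every other vertex, and zero maps along all arrows.

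\textbf{Step 1: two tensor identities.} By the definition of the pointwise tensor product, for any $M,P\in\Mod RQ$ we have
\[
M\otimes R^{(i)} \iso (M_i)^{(i)}, \qquad N^{(i)}\otimes P \iso (N\otimes_R P_i)^{(i)}.
\]

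\textbf{Step 2: a vertexwise filtration.} Since $Q$ is finite and acyclic, I enumerate $Q_0=\{i_1,\dots,i_n\}$ so that every arrow goes from a higher index to a lower one. Then each $\{i_1,\dots,i_k\}$ is closed under successors, so the subspaces $M^{\le k}$ supported on it are subrepresentations of $M$. This gives a filtration $0=M^{\le 0}\subseteq M^{\le 1}\subseteq\cdots\subseteq M^{\le n}=M$ with $M^{\le k}/M^{\le k-1}\iso (M_{i_k})^{(i_k)}$. This step is routine but must be checked carefully, since it is where acyclicity is used.

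\textbf{Step 3: each $\catX_i$ is a tensor torsion class of $\catmod R$.} Let $\catX$ be a tensor torsion class of $\catmod RQ$. The functor $N\mapsto N^{(i)}$ is exact, so $\catX_i$ is closed under quotients and extensions. For $N\in\catX_i$ and $N'\in\catmod R$, Step 1 gives $N^{(i)}\otimes N'^{(i)}\iso (N\otimes_R N')^{(i)}\in\catX$, so $N\otimes_R N'\in\catX_i$. Hence $\catX_i$ is a torsion class, and it is closed under direct summands since it is closed under quotients. By \cref{prp:tors=Serre in mod R} (or \cref{prp:Serre=tors} applied to $\Mod R$), $\catX_i$ is a Serre subcategory of $\catmod R$.

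\textbf{Step 4: $\catX$ is determined vertexwise.} I claim $\catX=\{M\in\catmod RQ\mid M_i\in\catX_i \text{ for all } i\}$.
\begin{itemize}
\item For "$\subseteq$": if $M\in\catX$, then $(M_i)^{(i)}\iso M\otimes R^{(i)}\in\catX$ because $\catX$ is a tensor ideal, so $M_i\in\catX_i$.
\item For "$\supseteq$": the filtration of Step 2 has factors $(M_{i_k})^{(i_k)}\in\catX$, and $\catX$ is extension-closed, so $M\in\catX$.
\end{itemize}
Since $M\mapsto M_i$ is exact and each $\catX_i$ is Serre, the right-hand side is closed under subobjects. This proves (1).

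\textbf{Step 5: the bijection in (2).} For the inverse map, send $(\catS_i)_{i\in Q_0}$ to $\catX:=\{M\mid M_i\in\catS_i \text{ for all } i\}$. This $\catX$ is Serre by exactness of evaluation. It is a tensor ideal because $(M\otimes P)_i=M_i\otimes_R P_i$, and every Serre subcategory of $\catmod R$ is a tensor ideal: by Gabriel's classification it equals $\Supp^{-1}(\Phi)$, and $\Supp(M\otimes_R P)\subseteq\Supp M$. It remains to check the two composites are identities.
\begin{itemize}
\item Starting from a tensor Serre subcategory, Step 4 shows that reconstructing from $(\catX_i)_i$ returns $\catX$.
\item Starting from $(\catS_i)_i$, we have $(N^{(i)})_j=0$ for $j\ne i$ and $(N^{(i)})_i=N$. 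Hence $N^{(i)}\in\catX$ if and only if $N\in\catS_i$, so $\catX_i=\catS_i$.
\end{itemize}
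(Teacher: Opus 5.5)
Your proposal is correct and follows essentially the same route as the paper: you reduce to the vertexwise subcategories $\catX_i$ via the vertex-concentrated representations $N^{(i)}$ (the paper's $L_i(N)$) and the identity $M\otimes R^{(i)}\iso (M_i)^{(i)}$, invoke the commutative-ring result to make each $\catX_i$ Serre, and rebuild $\catX$ from vertex-concentrated pieces, where your topological-ordering filtration is just a cleaner packaging of the paper's induction via a sink. The paper defines $\catX_i$ as $\{V_i \mid V\in\catX\}$ rather than $\{N \mid N^{(i)}\in\catX\}$, but for tensor ideals these agree by exactly your Step 1 identity, so nothing in your argument changes.
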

\begin{proof}
We first introduce some notation.
Fix a vertex $i\in Q_0$.
For an $R$-module $M$, define an $RQ$-module $L_i(M)$ by
\[
L_i(M)_j:=
\begin{cases}
M & \text{if $j=i$}\\
0 & \text{if $j\ne i$}
\end{cases}
\; (j\in Q_0),\quad
L_i(M)_a := 0 \; (a\in Q_1).
\]
The assignment $M \mapsto L_i(M)$ defines an exact functor $L_i \colon \Mod R \to \Mod RQ$.

Let $\catX$ be a tensor torsion class of $\catmod RQ$.
Define a subcategory of $\catmod R$ by
\begin{equation}\label{eq:tensor tors is Serre in mod RQ}
\catX_i := \{M \in \catmod R \mid \text{$M=V_i$ for some $V\in \catX$}\}.
\end{equation}
Then, for any $M\in \catmod R$, we have that $M \in \catX_i$ if and only if $L_i(M)\in \catX$
since $V\otimes L_i(R)\iso L_i(V_i)$ for any $V \in \Mod RQ$.
As $L_i$ is an exact functor,
the subcategory $\catX_i$ is also a torsion class of $\catmod R$,
and thus, it is a Serre subcategory by \cref{prp:tors=Serre in mod R}.
We prove $\catX$ is determined by $(\catX_i)_{i\in Q_0}$ and it is a Serre subcategory of $\catmod RQ$.
It is enough to show that, for any $V \in \catmod RQ$,
we have that $V\in \catX$ if and only if $V_i \in \catX_i$ for any $i\in Q_0$.
The ``only if'' direction is clear.
Suppose that $V_i \in \catX_i$ for any $i\in Q_0$.
We prove $V \in \catX$ by induction on the number $m$ of vertices $i\in Q_0$ such that $V_i \ne 0$.
If $m=1$, then $V=L_i(V_i)$ for some $i \in Q_0$. Thus, we have $V\in \catX$.
Assume that $m>1$.
Let $i_0 \in Q_0$ be a sink, that is, there are no arrows starting from $i_0$.
Then $L_i(V_i)$ is a $RQ$-submodule of $V$.
By the induction hypothesis, both $V/L_i(V_i)$ and $L_i(V_i)$ belong to $\catX$,
and thus so does $V$.

To prove the bijection (2), consider the following subcategory for any $(\catX_i)_{i\in Q_0} \in \prod_{i\in Q_0} \serre(\catmod R)$:
\[
\{V \in \catmod RQ \mid \text{$V_i \in \catX_i$ for any $i \in Q_0$}\}.
\]
Then it is a tensor Serre subcategory as every Serre subcategory of $\catmod R$ is a tensor ideal (cf.\ \cref{prp:criterion tensor CE}).
From this, we can easily see that the map in (2) is surjective.
The injectivity follows from the first part. 
\end{proof}

\subsubsection{Modules over Hopf algebras}
See \cite[Chapter III]{Kassel} for the basics of Hopf algebras.
A \emph{bialgebra} is an algebra $B$ over a field $\bbk$
equipped with algebra homomorphisms $\Delta \colon B \to B\otimes_\bbk B$ and $\epsilon \colon B \to \bbk$ satisfying certain conditions.
The homomorphisms $\Delta$ and $\epsilon$ are called the \emph{coproduct} and the \emph{counit}, respectively.
For any $M, N \in \Mod B$,
the tensor product $M\otimes_\bbk N$ over $\bbk$ can be regarded as
a $B$-module via the coproduct $\Delta$.
The vector space $\bbk$ can be regarded as a $B$-module via the counit $\epsilon$.
Then $\Mod B$ is a monoidal category with tensor product $\otimes:= \otimes_\bbk$ and unit object $I_{\Mod B}=\bbk$.
Let $\tau\colon B \otimes_\bbk B \isoto B \otimes_\bbk B$ be a $\bbk$-linear isomorphism defined by $\tau(a \otimes b)=b\otimes a$.
The bialgebra $B$ is said to be \emph{cocommutative} if $\tau \circ \Delta = \Delta$.
In this case, $\Mod B$ is a Grothendieck cosmos (cf.\ \cref{prp:criterion for Groth cosmos}).

A \emph{Hopf algebra} is a bialgebra $H$ with an \emph{antipode}, 
that is, an algebra homomorphism $S\colon H \to H^{\op}$ satisfying certain conditions.
For any $M,N \in \Mod H$,
the $\bbk$-vector space $\Hom_{\bbk}(M,N)$ can be viewed as an $H$-module via the antipode $S$
(cf.\ \cite[Section III.5]{Kassel}).
If $H$ is a cocommutative Hopf algebra,
then the internal Hom in $\Mod H$ is given by $\Hom_{\bbk}(M,N)$.
The dualizable objects of $\Mod H$ are precisely the $H$-modules that are finite dimensional over $\bbk$.
Therefore, if $H$ is a finite-dimensional cocommutative Hopf algebra,
then $\Mod H$ is a locally noetherian Grothendieck cosmos satisfying (C1) and (C2).
However, in general, it does not satisfy the assumptions in \cref{setup:classify subcat}.
Nevertheless, we obtain the following observation.
\begin{prp}
Let $H$ be a cocommutative Hopf algebra that is right noetherian as a ring.
Then every tensor torsion class of $\catmod H$ is either zero or $\catmod H$.
In particular, every tensor torsion class is a Serre subcategory.
\end{prp}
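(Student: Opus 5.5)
The plan is to exploit the Hopf-algebra structure directly: for every $M\in\Mod H$ there is an isomorphism of $H$-modules
\[
H\otimes M \iso H\otimes M^{\mathrm{triv}},
\]
where $M^{\mathrm{triv}}$ denotes the underlying vector space of $M$ with $H$ acting through the counit $\epsilon$. This is the standard "fundamental theorem" isomorphism $h\otimes m\mapsto \sum h_{(1)}\otimes S(h_{(2)})m$, with inverse $h\otimes m\mapsto \sum h_{(1)}\otimes h_{(2)}m$. The antipode makes these maps inverse to each other, and cocommutativity lets us place $H$ on either side of the tensor product. Consequently $H\otimes M\iso H^{\oplus \dim_{\bbk} M}$ is a free $H$-module. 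In particular, since $\bbk$ is the unit, tensoring any nonzero module with the regular module $H$ produces a nonzero free module.

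Let $\catT$ be a nonzero tensor torsion class of $\catmod H$, and pick $0\neq T\in\catT$. I first check that $H\otimes T$ lies in $\catmod H$. Two cases arise.
\begin{enumerate}
\item If $T$ is finite-dimensional, then $H\otimes T\iso H^{\oplus \dim T}$ is finitely generated, so it lies in $\catT$, because $\catT$ is a tensor ideal of $\catmod H$ (tensoring with objects of $\catmod H$).
\item If $T$ is infinite-dimensional, then $H\otimes T$ is not finitely generated. The fix is that $\catT$ is closed under quotients, so $T$ has a nonzero finite-dimensional quotient, namely any simple quotient, provided simples are finite-dimensional.
\end{enumerate}
The second case is where I expect trouble, since simple modules over a noetherian Hopf algebra need not be finite-dimensional. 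I would try to avoid it altogether: $T\otimes H$ is free of rank $\dim T$, so choose a finite-dimensional subspace $V\subseteq T$ and the induced map $H\otimes V^{\mathrm{triv}}\to H\otimes T$. A cleaner route is to note that $H\otimes T$ surjects onto $T$ via $h\otimes t\mapsto \epsilon(h)t$. Instead, use that $H\otimes T$ free of positive rank surjects onto $H$ (project to one summand). Since $\catT$ is closed under quotients, this gives $H\in\catT$ provided $H\otimes T\in\catT$. I would therefore interpret the tensor ideal condition in $\catmod H$ as closure under tensoring with finitely generated modules whenever the result stays in $\catmod H$. If instead the paper means $H\otimes T$ in $\Mod H$, then I will replace $H$ by a finitely generated free summand via the projection $H\otimes T\iso H^{(\dim T)}\surj H$, applied after noting that $\catT$ is closed under quotients inside the ambient tensoring. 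This bookkeeping point is the main obstacle.

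Once $H\in\catT$, the rest is immediate. Every $M\in\catmod H$ is a quotient of $H^{\oplus n}$, and $\catT$ is closed under extensions (hence finite direct sums) and quotients. Therefore $\catT=\catmod H$. So the only tensor torsion classes are $0$ and $\catmod H$, and both are obviously Serre subcategories.
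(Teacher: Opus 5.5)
Your core argument is correct, and it takes a different route from the paper.

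The paper takes a nonzero $X\in\catX$ and uses the evaluation map $\Hom_\bbk(X,\bbk)\otimes_\bbk X\to\bbk$. This map is nonzero onto the one-dimensional, hence simple, unit, so $\bbk\in\catX$. Then $M\iso M\otimes\bbk\in\catX$ for every $M$.

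You instead use the Hopf-module isomorphism $H\otimes T\iso H\otimes T^{\mathrm{triv}}\iso H^{(\dim T)}$. This exhibits the regular module $H$ as a quotient of an object of $\catT$, and you then generate $\catmod H$ from $H$ by finite sums and quotients.

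The paper's argument is purely monoidal. It only uses that the unit is simple and that $[X,I]\otimes X\to I$ is nonzero, so it transfers to other closed monoidal abelian categories with these properties. Yours is specific to Hopf algebras, but it only ever tensors with the single finitely generated object $H$. It also gives sharper information: $H\otimes T$ is free of rank $\dim T$.

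You should, however, delete the case distinction and the ``main obstacle'' discussion. With the paper's definition ($X\otimes M\in\catX$ for all $X\in\catX$ and all $M$), take $M=H$. This gives $H\otimes T\in\catT$ for every $T\in\catT$ outright, and the projection $H\otimes T\surj H$ finishes. As a byproduct, $H^{(\dim T)}$ must then be finitely generated, so every $T\in\catT$ is finite-dimensional and your case (2) is vacuous.

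The weakened reading you propose (closure only when the product happens to lie in $\catmod H$) would create a genuine gap, and none of your patches closes it:
\begin{itemize}
\item Simple modules over noetherian cocommutative Hopf algebras need not be finite-dimensional, e.g.\ Verma modules of non-integral highest weight over $U(\mathfrak{sl}_2)$.
\item A finite-dimensional subspace $V\subseteq T$ yields no morphism into an object of $\catT$.
\item ``Closed under quotients inside the ambient tensoring'' is not a property that $\catT$ has.
\end{itemize}
The paper's proof would not survive that reading either. It tensors with $\Hom_\bbk(X,\bbk)$, which lies in $\catmod H$ once $X$ is finite-dimensional. Your freeness observation is exactly what guarantees that finite-dimensionality under the literal definition.
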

\begin{proof}
Let $\catX$ be a nonzero tensor torsion class of $\catmod H$.
Take a nonzero object $X\in\catX$.
Since the evaluation map $\Hom_{\bbk}(X,\bbk)\otimes_\bbk X \to \bbk$ is a nonzero $H$-linear map, it is surjective.
Since $\catX$ is a tensor torsion class,
we have $\bbk \in \catX$, and hence $\catX=\catmod H$.
\end{proof}

Finally, we give examples of noetherian cocommutative Hopf algebras.
\begin{ex}
Let $\bbk$ be a field.
\begin{enua}
\item
For a finite group $G$,
the group algebra $\bbk G$ is a finite-dimensional cocommutative Hopf algebra
whose coproduct, counit, and antipode are given by
\[
\Delta(g)=g\otimes g,\quad
\epsilon(g)=1,\quad
S(g)=g^{-1}\quad
(g\in G).
\]
\item
For a finite-dimensional Lie algebra $\mathfrak{g}$,
the universal enveloping algebra $U(\mathfrak{g})$
is a noetherian cocommutative Hopf algebra
whose coproduct, counit, and antipode are given by
\[
\Delta(x)=x\otimes 1 + 1\otimes x,\quad
\epsilon(x)=0,\quad
S(x)=-x\quad
(x\in \mathfrak{g}).
\]
For the fact that $U(\mathfrak{g})$ is noetherian, we refer to \cite[Chapter 1, \S 7.4]{MR}, for example.
\end{enua}
\end{ex}


\begin{thebibliography}{NNNN00}
\bibitem[AIR14]{AIR}
 T.\ Adachi, O.\ Iyama, I.\ Reiten,
 \emph{$\tau$-tilting theory},
 Compos.\ Math.\ \textbf{150} (2014), no.\ 3, 415--452.

\bibitem[Asa20]{Asai}
 S.\ Asai,
 \emph{Semibricks},
 Int.\ Math.\ Res.\ Not.\ IMRN (2020), no.\ 16, 4993--5054.

\bibitem[AS16]{AS}
 K.\ Ahmadi, R.\ Sazeedeh,
 \emph{Hereditary torsion theories of a locally Noetherian Grothendieck category},
 Bull.\ Aust.\ Math.\ Soc.\ \textbf{94} (2016), no.\ 3, 421--430.

\bibitem[ASS06]{ASS}
 I.\ Assem, D.\ Simson, A.\ Skowro\'{n}ski,
 \emph{Elements of the representation theory of associative algebras. Vol.\ 1.
 Techniques of representation theory},
 London Math.\ Soc.\ Stud.\ Texts, \textbf{65}
 Cambridge University Press, Cambridge, 2006. x+458 pp.

\bibitem[Bak20]{Bak}
 R.\ H.\ Bak,
 \emph{Dualizable and semi-flat objects in abstract module categories},
 Math.\ Z.\ \textbf{296} (2020), no.\ 1--2, 353--371.

\bibitem[Bal05]{Balmer}
 P.\ Balmer,
 \emph{The spectrum of prime ideals in tensor triangulated categories},
 J.\ Reine Angew.\ Math.\ \textbf{588} (2005), 149--168.

\bibitem[BKS19]{BKS}
 P.\ Balmer, H.\ Krause, G.\ Stevenson,
 \emph{Tensor-triangular fields: ruminations},
 Selecta Math.\ \textbf{25} (2019), no.\ 1, Paper No.\ 13, 36 pp.

\bibitem[Bor63]{Borelli}
 M.\ Borelli,
 \emph{Divisorial varieties},
 Pacific J.\ Math.\ \textbf{13} (1963), 375--388.

%

\bibitem[\v{C}\v{S}20]{CS}
 P.\ \v{C}oupek, J.\ \v{S}{\v t}ov\'{i}\v{c}ek,
 \emph{Cotilting sheaves on Noetherian schemes},
 Math.\ Z.\ \textbf{296} (2020), no.\ 1--2, 275--312.

\bibitem[Day70]{Day}
 B.\ Day,
 \emph{On closed categories of functors},
 Reports of the Midwest Category Seminar, IV, pp.\ 1--38,
 Lecture Notes in Math., Vol.\ 137
 Springer-Verlag, Berlin-New York, 1970.

\bibitem[Eno22]{Eno-ICE}
 H.\ Enomoto,
 \emph{Rigid modules and ICE-closed subcategories in quiver representations},
 J.\ Algebra \textbf{594} (2022), 364--388.

\bibitem[Eno]{Eno}
 H.\ Enomoto,
 \emph{IE-closed subcategories of commutative rings are torsion-free classes},
 preprint (2023), arXiv:2304.03260v2.

\bibitem[ES22]{ES-ICE}
 H.\ Enomoto, A.\ Sakai,
 \emph{ICE-closed subcategories and wide $\tau$-tilting modules},
 Math.\ Z.\ \textbf{300} (2022), no.\ 1, 541--577.

\bibitem[EGNO15]{EGNO}
 P.\ Etingof, S.\ Gelaki, D.\ Nikshych, V.\ Ostrik,
 \emph{Tensor categories},
 Math.\ Surveys Monogr.\ \textbf{205},
 American Mathematical Society, Providence, RI, 2015, xvi+343 pp.

\bibitem[Gab62]{Gab}
 P.\ Gabriel,
 \emph{Des cat\'{e}gories ab\'{e}liennes},
 Bull.\ Soc.\ Math.\ France \textbf{90} (1962), 323--448.

\bibitem[GW20]{GW-I}
 U.\ G\"{o}rtz, T.\ Wedhorn,
 \emph{Algebraic geometry I. Schemes--with examples and exercises}, Second edition,
 Springer Studium Mathematik--Master,
 Springer Spektrum, Wiesbaden, 2020, vii+625 pp.

\bibitem[GW23]{GW-II}
  U.\ G\"{o}rtz, T.\ Wedhorn,
 \emph{Algebraic geometry II: Cohomology of scheme--with examples and exercises},
 Springer Stud.\ Math.\ Master,
 Springer Spektrum, Wiesbaden, 2023, vii+869 pp.

\bibitem[Gro12]{Gross}
 P.\ Gross,
 \emph{The resolution property of algebraic surfaces},
 Compos.\ Math.\ \textbf{148} (2012), no.\ 1, 209--226.

\bibitem[Her97]{Herzog-Spec}
 I.\ Herzog,
 \emph{The Ziegler spectrum of a locally coherent Grothendieck category},
 Proc.\ London Math.\ Soc.\ (3) \textbf{74} (1997), no.\ 3, 503--558.

\bibitem[HO23]{HO}
 H.\ Holm, S.\ Odaba\c{s}\i,
 \emph{The tensor embedding for a Grothendieck cosmos}.
 Sci.\ China Math.\ \textbf{66} (2023), no.\ 11, 2471--2494.

\bibitem[HG16]{HG}
 H.\ Al Hwaeer, G.\ Garkusha,
 \emph{Grothendieck categories of enriched functors},
 J.\ Algebra \textbf{450} (2016), 204--241.

\bibitem[IT09]{IT}
 C.\ Ingalls, H.\ Thomas,
 \emph{Noncrossing partitions and representations of quivers},
 Compos.\ Math.\ \textbf{145} (2009), no.\ 6, 1533--1562.

\bibitem[IMST24]{IMST}
 K.\ Iima, H.\ Matsui, K.\ Shimada, R.\ Takahashi,
 \emph{When Is a Subcategory Serre or Torsion-Free?},
 Publ.\ Res.\ Inst.\ Math.\ Sci.\ \textbf{60} (2024), no.\ 4, 831--857.

\bibitem[IK24]{IK}
 O.\ Iyama, Y.\ Kimura,
 \emph{Classifying subcategories of modules over Noetherian algebras},
 Adv.\ Math.\ \textbf{446} (2024), Paper No.\ 109631.

\bibitem[JLV93]{JLV}
 A.\ Jerem\'{i}as L\'{o}pez, M.\ P.\ L\'{o}pez, E.\ Villanueva N\'{o}voa,
 \emph{Localization in symmetric closed Grothendieck categories},
 Third Week on Algebra and Algebraic Geometry (SAGA III),
 Bull.\ Soc.\ Math.\ Belg.\ S\'{e}r.\ A \textbf{45} (1993), no.\ 1--2, 197--221.

\bibitem[Kan12]{Kan-Serre}
 R.\ Kanda,
 \emph{Classifying Serre subcategories via atom spectrum},
 Adv.\ Math.\ \textbf{231} (2012), no.\ 3--4, 1572--1588.

\bibitem[Kan15a]{Kan-spcl}
 R.\ Kanda,
 \emph{Specialization orders on atom spectra of Grothendieck categories},
 J.\ Pure Appl.\ Algebra \textbf{219} (2015), no.\ 11, 4907--4952.

\bibitem[Kan15b]{Kan-CatSp}
 R.\ Kanda,
 \emph{Classification of categorical subspaces of locally Noetherian schemes},
 Doc.\ Math.\ \textbf{20} (2015), 1403--1465.

\bibitem[Kas95]{Kassel}
 C.\ Kassel,
 \emph{Quantum groups},
 Grad.\ Texts in Math., \textbf{155},
 Springer-Verlag, New York, 1995. xii+531 pp.

\bibitem[Kel82]{Kelly}
 G.\ M.\ Kelly,
 \emph{Basic concepts of enriched category theory},
 London Math.\ Soc.\ Lecture Note Ser.\ \textbf{64},
 Cambridge University Press, Cambridge-New York, 1982. 245 pp.

\bibitem[Kel94]{Keller}
 B.\ Keller,
 \emph{Deriving {DG} categories},
 Ann.\ Sci.\ \'{E}cole Norm.\ Sup.\ (4) \textbf{27} (1994), no.\ 1, 63--102.

\bibitem[KS24]{KS}
  T.\ Kobayashi, S.\ Saito,
 \emph{When are KE-closed subcategories torsion-free classes?},
 Math.\ Z.\ \textbf{307} (2024), no.\ 4, Paper No.\ 65.

\bibitem[KS]{KS-2}
 T.\ Kobayashi, S.\ Saito,
 \emph{Classifying KE-closed subcategories over a commutative noetherian ring},
 preprint (2025), arXiv:2509.05767

\bibitem[Kra97]{Krause-Spec}
 H.\ Krause,
 \emph{The spectrum of a locally coherent category},
 J.\ Pure Appl.\ Algebra \textbf{114} (1997), no.\ 3, 259--271.

\bibitem[Kra08]{Krause-torf}
 H.\ Krause,
 \emph{Thick subcategories of modules over commutative Noetherian rings (with an appendix by Srikanth Iyengar)},
 Math.\ Ann.\ \textbf{340} (2008), no.\ 4, 733--747.

\bibitem[Kra15]{Kra-KS}
 H.\ Krause,
 \emph{Krull--Schmidt categories and projective covers},
 Expo.\ Math.\ \textbf{33} (2015), no.\ 4, 535--549.

\bibitem[Mac98]{CWM}
 S.\ Mac Lane,
 \emph{Categories for the working mathematician},
 Second edition.
 Grad.\ Texts in Math.\ \textbf{5},
 Springer-Verlag, New York, 1998. xii+314 pp.

\bibitem[MS17]{MS}
 F.\ Marks, J.\ \v{S}t'ov\'{i}\v{c}ek,
 \emph{Torsion classes, wide subcategories and localisations},
 Bull.\ Lond.\ Math.\ Soc.\ \textbf{49} (2017), no.\ 3, 405--416.

\bibitem[MR01]{MR}
 J.\ C.\ McConnell, J.\ C.\ Robson,
 \emph{Noncommutative Noetherian rings},
 With the cooperation of L.\ W.\ Small.\ Revised edition,
 Grad.\ Stud.\ Math., \textbf{30},
 American Mathematical Society, Providence, RI, 2001. xx+636 pp.

\bibitem[NO04]{NO}
 C.\ N\v{a}st\v{a}sescu, F.\ Van Oystaeyen,
 \emph{Methods of graded rings},
 Lecture Notes in Math.\ \textbf{1836}
 Springer-Verlag, Berlin, 2004. xiv+304 pp.

\bibitem[Pos19]{Posur}
 S.\ Posur,
 \emph{Atom spectra of graded rings and sheafification in toric geometry},
 J.\ Algebra \textbf{534} (2019), 207--227.

\bibitem[Sai25]{Sai-coh}
  S.\ Saito,
 \emph{Classifying torsionfree classes of the category of coherent sheaves and their Serre subcategories},
 J.\ Pure Appl.\ Algebra \textbf{229} (2025), no.\ 1, Paper No.\ 107799, 34 pp.

\bibitem[SW11]{SW}
 D.\ Stanley, B.\ Wang,
 \emph{Classifying subcategories of finitely generated modules over a Noetherian ring},
 J.\ Pure Appl.\ Algebra \textbf{215} (2011), no.\ 11, 2684--2693.

\bibitem[Ste75]{Ste}
 B.\ Stenstr\"{o}m,
 \emph{Rings of quotients. An introduction to methods of ring theory},
 Die Grundlehren der mathematischen Wissenschaften, Band \textbf{217}
 Springer-Verlag, New York-Heidelberg, 1975. viii+309 pp.

\bibitem[Tak08]{Takahashi}
 R.\ Takahashi,
 \emph{Classifying subcategories of modules over a commutative Noetherian ring},
 J.\ Lond.\ Math.\ Soc.\ (2) \textbf{78} (2008), no.\ 3, 767--782.

\bibitem[Tot04]{Totaro}
 B.\ Totaro,
 \emph{The resolution property for schemes and stacks},
 J.\ Reine Angew.\ Math.\ \textbf{577} (2004), 1--22.

\bibitem[SGA6]{SGA6}
  P.\ Berthelot, A.\ Grothendieck, L.\ Illusie,
 \emph{Th\'{e}orie des intersections et th\'{e}or\`{e}me de Riemann-Roch},
 Lecture Notes in Mathematics, \textbf{225},
 Springer-Verlag, 1971.

\bibitem[SP]{SP}
 The {Stacks Project Authors}, 
 \emph{Stacks Project},
 \url{https://stacks.math.columbia.edu}.
\end{thebibliography}
\end{document}